\definecolor{webgreen}{rgb}{0,.5,0}
\definecolor{webbrown}{rgb}{.6,0,0}
\newcommand{\seqnum}[1]{\href{https://oeis.org/#1}{\rm \underline{#1}}}
\begin{document}

\theoremstyle{plain}
\newtheorem{theorem}{Theorem}
\newtheorem{corollary}[theorem]{Corollary}
\newtheorem{lemma}[theorem]{Lemma}
\newtheorem{proposition}[theorem]{Proposition}

\theoremstyle{definition}
\newtheorem{definition}[theorem]{Definition}
\newtheorem{example}[theorem]{Example}
\newtheorem{conjecture}[theorem]{Conjecture}

\theoremstyle{remark}
\newtheorem{remark}[theorem]{Remark}

\begin{center}
\vskip 1cm{\LARGE\bf On Flattened Parking Functions}
\vskip 1cm
\large
Jennifer Elder\\
Department of Computer Science, Mathematics and Physics\\ Missouri Western State University\\ 
4525 Downs Drive\\
St.\ Joseph, MO 64507\\
USA\\
\href{mailto:flattenedparkingfunctions@outlook.com}{\tt flattenedparkingfunctions@outlook.com}\\
\ \\
Pamela E. Harris\\
Department of Mathematical Sciences\\ University of Wisconsin-Milwaukee\\
3200 N.\ Cramer Street\\
Milwaukee, WI 53211\\
USA\\
\href{mailto: peharris@uwm.edu}{\tt peharris@uwm.edu}
\\
\ \\
Zoe Markman\\
Department of Mathematics and Statistics\\ Swarthmore College\\
500 College Avenue\\
Swarthmore, PA 19081\\
USA\\
\href{mailto: zmarkma1@swarthmore.edu}{\tt zmarkma1@swarthmore.edu}\\
\ \\
Izah Tahir\\
School of Mathematics\\Georgia Institute of Technology\\
686 Cherry Street\\
Atlanta, GA 30332\\
USA\\
\href{mailto: itahir3@gatech.edu}{\tt itahir3@gatech.edu}\\
\ \\
Amanda Verga\\
Department of Mathematics\\ Trinity College\\
300 Summit Street\\
Hartford, CT 06106\\
USA\\
\href{mailto:amanda.verga@trincoll.edu}{\tt amanda.verga@trincoll.edu}
\end{center}

\vskip .2in

\begin{abstract}
A permutation of length $n$ is called a flattened partition if the leading terms of maximal chains of ascents (called runs) are in increasing order.
We analogously define 
flattened parking functions: a subset of parking functions for which the leading terms of maximal chains of weak ascents (also called runs) are in weakly increasing order.
For $n\leq 8$, where there are at most four runs, we give data for the number of flattened parking functions, and it remains an open problem to give formulas for their enumeration in general.
We then specialize to a subset of flattened parking functions that we call $\mathcal{S}$-insertion flattened parking functions. 
These are obtained by inserting all numbers of a multiset $ \mathcal{S}$ whose elements are in $[n]=\{1,2,\ldots,n\}$, 
into a permutation of $[n]$ and checking that the result is flattened.
We provide bijections between  $\mathcal{S}$-insertion flattened parking functions and $\mathcal{S}'$-insertion flattened  parking functions, where $\mathcal{S}$ and $\mathcal{S}'$ have certain relations.
We then further specialize to the case $\mathcal{S}=\textbf{1}_r$, the multiset with $r$ ones, and we establish a bijection between  $\textbf{1}_r$-insertion flattened parking functions and set partitions of $[n+r]$ with the first $r$ integers in different subsets. 
\end{abstract}

\section{Introduction}
Let $n\in\mathbb{N}\coloneqq\{1,2,3,\ldots\}$ and let $\mathfrak{S}_{n}$ denote the set of permutations of $[n]\coloneqq\{1,2,\ldots,n\}$. Recall that a permutation $\sigma = \sigma_1 \sigma_2\cdots \sigma_n\in\mathfrak{S}_n$ has a \emph{descent} (an \emph{ascent}) at index $i$ if $\sigma_i>\sigma_{i+1}$ (if $\sigma_i<\sigma_{i+1}$). Ascents and descents are often referred to as a \emph{permutation statistic}.
The study of permutation statistics is a robust area of research in combinatorics and has gathered much interest as its connections to other areas of math are plentiful \cite{ Harris1, JEetal, TMMSSW, JS, YZ}.

Another statistic of interest is the number of runs of a permutation. 
We recall that a \textit{run of length $p+1$} in $ \sigma\in\mathfrak{S}_n $ is a subword $ \sigma_{i} \sigma_{i+1} \cdots \sigma_{i+p-1} \sigma_{i+p} $ where $ i, i+1, \ldots, i + p-1 $ are consecutive ascents, and $ \sigma_{i+p} > \sigma_{i+p+1} $, or $i+p=n$. Given $\sigma\in\mathfrak{S}_n$, we let $\text{run}(\sigma)$ denote the number of runs.
Zhuang \cite{YZ} counted permutations by runs and their work provided a generalization of a theorem of Gessel, which gave a reciprocity formula for noncommutative symmetric functions. Gessel \cite{IG} also enumerated permutations having certain restrictions on their run lengths. 
Related work considered permutations whose runs satisfy the condition that the leading terms form an increasing sequence; we define this formally next.

A permutation $ \sigma $ is said to be a \textit{flattened partition} if it consists of runs arranged from left to right such that their leading values are in increasing order. For example, the permutation $\sigma = 1423$ is flattened, as its runs are $14$ and $23$, and $1\leq 2$. However, the permutation $\tau = 4321$ is not flattened, as each element is in its own run, and these elements form a decreasing sequence.
The research teams of Nabawanda, Rakotondrajao, Bamunoba \cite{NaRaBa} and Beyene, Mantaci \cite{FBRM} studied the distribution of runs for flattened partitions.
Nabawanda et al.\ establish a new combinatorial interpretation for the number of flattened partitions with $k$ runs along with a bijection between this combinatorial family of objects and set partitions of $[n-1] $.

In the current work, we extend the definition of flattened partitions to parking functions by allowing ascents to be  \emph{weak} ascents, i.e., a word $a_1a_2\cdots a_n$ has a \emph{weak ascent} at index $i$ if $a_i\leq a_{i+1}$.
We begin by recalling that parking functions are tuples in $[n]^n$ describing the parking preferences of $n$ cars (denoted $c_1,c_2,\ldots,c_n$) entering a one-way street (in sequential order) with a preference for a parking spot among $n$ spots on the street. 
If $\alpha=(a_1,a_2,\ldots,a_n)\in[n]^n$ is such that $a_i$ denotes the parking preference of car $c_i$, then $c_i$ drives to $a_i$ and parks there if the spot is not previously occupied by an earlier car $c_j$, with $1\leq j<i$. 
If such a car has already parked in spot $a_i$, then $c_i$ drives down the street parking in the first available spot. 
Given the parking preferences in $\alpha$, if the $n$ cars are able to park in the $n$ spots on the street, then we say that $\alpha$ is a parking function.
We let $\textnormal{PF}_n$ denote the set of all parking functions of length $n$ and recall from Konheim and Weiss \cite{KW} that $|\textnormal{PF}_n|=(n+1)^{n-1}$. 
Of course, if every entry of $\alpha$ is distinct (i.e.,~ $\alpha$ is a permutation of $n$), then all of the cars can park. Thus the set of permutations on $n$ letters is a subset of $\textnormal{PF}_n$. 

There is much known about parking functions, including their enumeration, their characterization in terms of inequalities, and statistics such as ascents and descents from Schumacher \cite{SCH}, not to mention their numerous generalizations, such as the work done by Yan \cite{Yan}. 
However, there is little to nothing known about the set of flattened parking functions of length $n$:
\[\textnormal{flat}(\textnormal{PF}_n)\coloneqq\{\alpha\in\textnormal{PF}_n:
\mbox{the leading values in the runs of}\, \alpha\, \mbox{are in weakly increasing order}
\},\]
or of the set of all flattened parking functions of length $n$ with $k$ runs:
\[\textnormal{flat} _k(\textnormal{PF}_n)\coloneqq\{\alpha\in \textnormal{flat}(\textnormal{PF}_n):\alpha\,\mbox{has $k$ runs}
\}. \]
Given a parking function $\alpha$, we say $\alpha$ is \emph{flattened} whenever $\alpha\in\textnormal{flat} (\textnormal{PF}_n)$ and \emph{$k$-flattened} whenever $\alpha\in\textnormal{flat} _k(\textnormal{PF}_n)$. 
Moreover, we abuse notation and let $\text{run}(\alpha)$ denote the number of runs of $\alpha$.
Thus, the goal of this paper is to extend the study of run distributions of flattened partitions to parking functions.

Our preliminary results establish that the set $\textnormal{flat} _1(\textnormal{PF}_n)$ is enumerated by the Catalan numbers, \seqnum{A000108}, (Theorem \ref{thm:CatalanBijection}), 
and we show that the maximum number of runs in a flattened parking function of length $n$ is at most $\lceil\frac{n}{2}\rceil$, see (Theorem \ref{thm:MaxNumberofRuns}).
In Table \ref{flattenedpf}, we provide  data on the number of flattened parking functions of length $ n $, which we then aggregate by $k$, the number of runs.
We note that the sequence $ (\textnormal{flat} (\textnormal{PF}_n))_{n\geq1} $ does not appear as a known sequence in the Online Encyclopedia of Integer Sequences (OEIS) \cite{OEIS}. 
In fact, it remains an open problem to provide a recursive formula for this sequence. Given this challenge, we restrict our study by considering a specific subset of parking functions that are created from permutations and (through an insertion process) made into parking functions. 
\begin{center}
\begin{table}[H]
\centering
\begin{tabular}{|l|l||l|l|l|l|} 
\hline
$n$& $|\textnormal{flat}(\textnormal{PF}_n)|$ & $|\textnormal{flat} _1(\textnormal{PF}_n)|$ &$|\textnormal{flat} _2(\textnormal{PF}_n)|$ & $|\textnormal{flat} _3(\textnormal{PF}_n)|$ & $|\textnormal{flat} _4(\textnormal{PF}_n)|$ \\ 
\hline
1 & 1 & 1 & 0 & 0 & 0 \\ 
2 & 2 & 2 & 0 & 0 & 0 \\ 
3 & 8 & 5 & 3 & 0 & 0 \\ 
4 & 46 & 14 & 32 & 0 & 0 \\ 
5 & 336 & 42 & 245 & 49 & 0 \\ 
6 & 2937 & 132 & 1656 & 1149 & 0 \\ 
7 & 29629 & 429 & 10563 & 17008 & 1629 \\
8 & 336732 & 1430 & 65472 & 204815 & 65015 \\ 
\hline
\end{tabular}
\caption{Total flattened parking functions of length $ n $, aggregated by $k$, the number of runs.}\label{flattenedpf}
\end{table}
\end{center}

Let $\mathcal{S}$ be a multiset whose elements are in $[n+1]$ and let $|\mathcal{S}|$ denote the cardinality of $\mathcal{S}$ with multiplicity. 
Given a permutation 
$\pi\in\mathfrak{S}_n$, let 
$\mathcal{I}(\mathcal{S},\pi)$ be the set of words of length $n+|S|$ constructed by inserting the elements of $\mathcal{S}$ into the permutation $\pi$ in all possible ways. 
For example, if $\mathcal{S}=\{1,1\}$ and $\pi=12$, then  $\mathcal{I}(\mathcal{S},\pi)=\{1112,1121,1211\}$.
Then, we define 
the set of \textit{$ \mathcal{S} $-insertion parking functions} as the set
\[ \mathcal{PF}_n(\mathcal{S}) \coloneqq \bigcup_{\pi\in\mathfrak{S}_n}\mathcal{I}(\mathcal{S},\pi) .\]
We show in (Lemma \ref{thm:insertionAreParkingFxn}) that for all multisets 
$\mathcal{S}$ with elements in $[n+1]$, the set $\mathcal{PF}_n(\mathcal{S}) $ 
is a subset of $\textnormal{PF}_{n+|\mathcal{S}|}$.
A parking function $\alpha\in\mathcal{PF}_n(\mathcal{S}) $ is said to be a \textit{ $\mathcal{S} $-insertion flattened parking function} if it satisfies the properties of a flattened parking function; we let \[\textnormal{flat} (\mathcal{PF}_n(\mathcal{S}))\coloneqq\{\alpha\in\mathcal{PF}_n(\mathcal{S}):\alpha\mbox{ is flattened}\} \]
denote the set of $\mathcal{S}$-insertion flattened parking functions and we let
\[\textnormal{flat} _k(\mathcal{PF}_n(\mathcal{S}))\coloneqq\{\alpha\in\mathcal{PF}_n(\mathcal{S}):\alpha\mbox{ is flattened with $k$ runs}\}\]
denote the set of $\mathcal{S}$-insertion flattened parking functions with exactly $k$ runs.
For ease of notation, henceforth we let $|\textnormal{flat} (\mathcal{PF}_n(\mathcal{S}))|=f(\mathcal{S}; n)$ and $|\textnormal{flat} _k(\mathcal{PF}_n(\mathcal{S}))|=f(\mathcal{S}; n,k)$.

Given these definitions, in Section \ref{section:general_S_insertion}, we  provide bijections between sets $\textnormal{flat} (\mathcal{PF}_n (\mathcal{S}))$ and $\textnormal{flat}( \mathcal{PF}_n (\mathcal{S}'))$ with $\mathcal{S}$ and $\mathcal{S}'$ having certain relations.
In Section~\ref{Sec:OnlyOneOne}, we focus on flattened 
$\mathbf{1}_r$-insertion flattened parking functions, where $\mathbf{1}_{r}\coloneqq\{1,\ldots,1\}$ with $|\mathbf{1}_r|=r$.
When $r=1$, our results generalize many of the results provided by \cite{NaRaBa} from flattened partitions to  $\{1\}$-insertion flattened parking functions.
We then extend our study to $r\geq 2$, in Theorem~\ref{rkBellNumbers} we give a bijection between the elements of $\textnormal{flat}_{k+1}(\mathcal{PF}_{n+1}(\mathbf{1}_r))$ and 
set of set partitions of $ [n+r] $ satisfying the following conditions:
\begin{enumerate}
\item the first $ r $ numbers are in different subsets, and 
\item there are exactly $k$ blocks with at least two elements.
\end{enumerate}
We define the $(r,k)$-Bell numbers, denoted $B_{k}(n,r)$, as the cardinality of the set partitions described above, which generalize the sequence found in \seqnum{A124234} for $r>1$.
We remark that in our work we establish that the statistic $k$, which accounts for $k+1$ runs in the  $\mathbf{1}_r$-insertion flattened parking functions and the $k$ blocks of size at least 2 in the set partitions is preserved by our bijections. 
While Nabawanda et al.\ noted that the bijection was statistic preserving, they did not note that $f_{n,k}$ is enumerated by \seqnum{A124234}.

With those results at hand, in Section \ref{sec:GenFunc},
we attempted to generalize the work of Nabawanda et al.\ and study the exponential generating function of the numbers $|\textnormal{flat} _k(\mathcal{PF}_n(\mathbf{1}_r))|=f(\mathbf{1}_{r};n,k)$:
\[F(x,y,z) = \sum_{n\geq 0}\;\sum_{k\geq 0} \;\sum_{r\geq 1} f(\mathbf{1}_{r};n,k)x^{k}\dfrac{y^n}{n!}\frac{z^r}{r!} = \sum_{n\geq 0} \;\sum_{r\geq 1}\; \sum_{\alpha \in \textnormal{flat} (\mathcal{PF}_n(\mathbf{1}_{r}))} x^{\textnormal{run} (\alpha)} \dfrac{y^n}{n!}\frac{z^r}{r!}.\] 
However, we identify an error in the computation for the closed differential form provided in \cite[p.\ 8]{NaRaBa}. We provide the details of this error, which we were unable to resolve. We leave it as an open problem to find or prove that no closed differential form exists, as well as then consider the corresponding generating function for our generalizations.

Section \ref{sec:flattened_s_in_dif_runs} provides our final set of results, where we generalize the results in Nabawanda et al.\ \cite{FBRM,NaRaBa} to  $\mathbf{1}_{r}$-insertion flattened parking functions whose first $s$ integers belong to different runs. 
In particular, we provide a generalization
which involves compositions of $r$ delineating the position of ones among the $k$ runs.
To conclude, we provide a variety of directions for future work.

\section{Flattened parking functions}

Throughout this paper, we work with both permutations and parking functions. For this reason, we abuse notation slightly in the following way: for $\alpha = (a_1, a_2,\ldots, a_n) \in \textnormal{PF}_n$, we write $\alpha = a_1a_2\cdots a_n$. This way of writing the parking function as a word allows us to more easily go back and forth between parking functions and permutations, whose one line notation is written in this form.

As with permutations, the set of all parking functions of length $n$ grows rapidly. This is why we study new classes of parking functions with particular properties. 

Recall we generalized the notion of a run from permutations to parking functions by allowing weak ascents. This allows us to define the following class of parking functions.

\begin{definition}
A parking function $ \pi $ is said to be a \textit{flattened parking function} if it consists of runs arranged from left to right such that their leading values are in weakly increasing order. 
\end{definition}
For example, note that the parking function
$112233456$ has a single run and is flattened and the parking function $14224222$ is also flattened and has three runs. However, the parking function $7654321$ is not flattened. 
We denote the set of all flattened parking functions of length $ n $ as $\textnormal{flat}(\mathcal{PF}_n)$, and the set of flattened parking functions of length $ n $ with $ k $ runs as $ \textnormal{flat}_k(\mathcal{PF}_n)$. 
Table \ref{flattenedpf} provides data on the cardinalities of these sets for $1\leq k\leq 4$ and $1\leq n\leq 8$. Our first result connects these sets with the Catalan numbers $C_n=\frac{1}{n+1}\binom{2n}{n}$. 

\begin{theorem} \label{thm:CatalanBijection}
If $n\geq 1$, then $|\textnormal{flat} _1(\textnormal{PF}_n)|=C_n$.
\end{theorem}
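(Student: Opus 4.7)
The plan is to observe that a parking function with exactly one run is precisely a weakly increasing sequence $a_1 \leq a_2 \leq \cdots \leq a_n$, and then count such sequences via a standard bijection with Dyck paths (or equivalently lattice paths from $(0,0)$ to $(n,n)$ staying weakly below the diagonal).

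First, I would unpack the definitions. Since a run in a flattened parking function is a maximal chain of weak ascents, the condition $\textnormal{run}(\alpha) = 1$ is equivalent to saying that $a_1 \leq a_2 \leq \cdots \leq a_n$; no descents occur anywhere. Thus $\textnormal{flat}_1(\textnormal{PF}_n)$ is exactly the set of weakly increasing parking functions of length $n$.

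Next, I would invoke the well-known characterization of weakly increasing parking functions: a weakly increasing sequence $a_1 \leq a_2 \leq \cdots \leq a_n$ with $a_i \in [n]$ is a parking function if and only if $a_i \leq i$ for every $i \in [n]$. This is straightforward: when the preferences are sorted, car $c_i$ parks in spot $i$ precisely when $a_i \leq i$. (One direction is obvious; for the other, if $a_i \leq i$ then after parking the first $i$ cars, spots $1,\ldots,i$ are all occupied by induction.) In particular $a_1 = 1$ is forced.

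Finally, I would exhibit the standard bijection with Catalan objects. Given such a sequence $(a_1,\ldots,a_n)$, define $y_0 = 0$ and $y_i = a_{i+1} - 1$ for $i = 1,\ldots,n-1$, together with any choice of $y_n \in \{y_{n-1},\ldots,n\}$ handled by the terminal north-run. More cleanly, map $\alpha$ to the monotone lattice path from $(0,0)$ to $(n,n)$ whose height after the $i$-th east step equals $a_i - 1$; the conditions $a_i \leq i$ and $a_i \leq a_{i+1}$ translate exactly to the path staying weakly below the diagonal and being monotone. Since such paths are counted by $C_n = \frac{1}{n+1}\binom{2n}{n}$, the result follows.

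The argument is mostly bookkeeping, so there is no serious obstacle — the only mild care needed is (i) to verify that the characterization $a_i \leq i$ is equivalent to being a parking function in the weakly increasing case, and (ii) to check that the bijection is well-defined in both directions. As an alternative, one could cite the fact that weakly increasing parking functions form one of the classical Catalan families (see, e.g., Stanley's \emph{Catalan Numbers}), but giving the explicit bijection seems more in the spirit of the paper.
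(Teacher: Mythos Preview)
Your proof is correct and follows the same line as the paper's: both identify $\textnormal{flat}_1(\textnormal{PF}_n)$ with the set of weakly increasing parking functions, after which the paper simply cites Armstrong--Loehr--Warrington for the Catalan count, whereas you supply the standard lattice-path bijection directly. Your first pass at the bijection (the sentence with ``any choice of $y_n$'') is muddled, but the ``more cleanly'' description you give immediately afterward is the correct standard map, so no real gap remains.
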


\begin{proof}
Flattened parking functions with one run are the same set as all weakly increasing parking functions, as a descent would mark the start of a new run. Armstrong, Loehr, and Warrington \cite{DAet} proved that weakly increasing parking functions are in bijection with the Catalan numbers. 
\end{proof}

\begin{theorem}\label{thm:MaxNumberofRuns}
The maximal number of runs for a flattened parking function of length $ n $ is~$ \left\lceil\frac{n}{2}\right\rceil $.
\end{theorem}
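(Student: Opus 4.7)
The plan is to prove the theorem in two stages: first an upper bound of $\lceil n/2 \rceil$ on the number of runs via a structural length argument, and then an explicit construction showing the bound is attained.

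For the upper bound, I would start from the key observation that adjacent runs must be separated by a \emph{strict} descent. This is because runs are defined via maximal chains of weak ascents, so if the last entry of one run weakly ascended into the first entry of the next, the two would merge into a single run. Thus, for any $\alpha \in \textnormal{flat}_k(\textnormal{PF}_n)$ with runs $R_1, \ldots, R_k$, leading values $\ell_1 \leq \cdots \leq \ell_k$ (by the flattened condition), and trailing values $e_1, \ldots, e_k$, we have $e_i > \ell_{i+1}$ for each $i < k$. Combining this with the fact that a run of length $1$ has $\ell_i = e_i$, any non-terminal length-$1$ run would force $\ell_i = e_i > \ell_{i+1} \geq \ell_i$, a contradiction. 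Hence every run except possibly the last has length at least $2$, yielding
\[n \;=\; \sum_{i=1}^{k} |R_i| \;\geq\; 2(k-1) + 1 \;=\; 2k-1,\]
and rearranging gives $k \leq \lceil n/2 \rceil$.

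To show this bound is attained, I would exhibit an explicit flattened permutation of $[n]$ (which is automatically a parking function) realizing $\lceil n/2 \rceil$ runs. For $n = 2m$, the interleaving $1,\, m+1,\, 2,\, m+2,\, \ldots,\, m,\, 2m$ decomposes into $m$ length-$2$ runs $(i,\, m+i)$ with strictly increasing leading values $1, 2, \ldots, m$ and separating descents $m+i > i+1$ (the cases $m \leq 1$ being trivial). For $n = 2m+1$, appending a single entry $m+1$ as a length-$1$ tail to the analogous word $1,\, m+2,\, 2,\, m+3,\, \ldots,\, m,\, 2m+1$ yields $m+1 = \lceil n/2 \rceil$ runs. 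The only real subtlety in the whole argument is the first step---pinning down that the weak-ascent convention forces a strict descent between consecutive runs---after which both halves of the proof are essentially bookkeeping; in particular, this is precisely what rules out the naive ``all runs of length one'' construction that would otherwise give $n$ runs.
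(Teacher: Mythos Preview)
Your proof is correct and uses essentially the same key observation as the paper: that any non-terminal run in a flattened parking function must have length at least two, since a length-one run $R_i$ would force $\ell_i = e_i > \ell_{i+1} \geq \ell_i$. Your presentation is slightly cleaner---you package the upper bound as the single inequality $n \geq 2(k-1)+1$ rather than splitting into parity cases as the paper does---and you go further by supplying explicit permutations attaining $\lceil n/2 \rceil$ runs, which the paper only alludes to implicitly.
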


\begin{proof}
Note that there cannot be a run of length one in the middle of a flattened parking function; if there is a run of length one aside from the last run, the leading value must be greater than the subsequent leading value. 
Hence the leading values would not fulfill the conditions for being flattened. So, every run besides the last run must have length at least two. 

We now consider the parity of $n$. 
First, if $ n $ is odd, then there can only be one run of length one, and all other runs must be of length at least two. So, the maximal number of runs is given by
\begin{equation*}
\frac{n-1}{2} + 1 = \left\lceil\frac{n}{2}\right\rceil.
\end{equation*}

In the case where $ n $ is even, there can only be one run of length one. However, if there is one run of odd length, there must be a second run of odd length: in which case we have either one run of length one and one run of length three or more, or no runs of length one. Observe that having one run of length one and one run of length three does not yield a greater number of runs than the flattened parking function with all runs of length two. 
Thus, we can split $ n $ into runs of length two to get a maximal number of runs which is given by $ \frac{n}{2} $.
This is equivalent to~$ \left\lceil\frac{n}{2}\right\rceil $.
\end{proof}

With these initial results at hand we are now able to begin our study of $\mathcal{S}$-insertion flattened parking functions.

\section{\texorpdfstring{$\mathcal{S}$}{Lg}-insertion flattened parking functions}\label{section:general_S_insertion}

Similarly to how we restricted our focus from parking functions to flattened parking functions, we restrict further to a smaller subclass. First, we define a process by which we can construct parking functions from permutations.

Let $\mathfrak{S}_n$ denote the set of permutations on $[n]$ and let
$\mathcal{S}$ be a multiset whose elements are in $[n+1]$ and let $|\mathcal{S}|$ denote the cardinality of $\mathcal{S}$ with multiplicity.

\begin{definition}\label{def:S_Insertion_Parking_Fun}
For a multiset $\mathcal{S}$ whose elements are in $[n+1]$, let $\mathcal{I}(\mathcal{S}, \pi)$ be the set of words of length $n+|S|$ constructed by inserting the elements of $\mathcal{S}$ into the permutation $\pi\in \mathfrak{S}_n$ in all possible ways. Denote the set of \textit{$ \mathcal{S} $-insertion parking functions} as the set
$ \mathcal{PF}_n(\mathcal{S}) \coloneqq \bigcup_{\pi\in\mathfrak{S}_n}\mathcal{I}(\mathcal{S}, \pi) $. 
\end{definition}

\begin{remark}
One might believe that the $\mathcal{S}$-insertion procedure would yield all of the parking functions of length $n+|\mathcal{S}|$. However, this is not true. For example, the parking function $111111119$ of length $9$ cannot be created from a permutation in $\mathfrak{S}_8$, as the smallest possible permutation in this list is the identity element on one letter, namely $\pi=1\in\mathfrak{S}_1$. Moreover, our insertion process would only allow $\mathcal{S}$ to contain the elements one and two, thereby never being able to insert the number~$9$. 
\end{remark}

We now show that all of the words created through the $\mathcal{S}$-insertion process in Definition~\ref{def:S_Insertion_Parking_Fun} produce parking functions.

\begin{theorem} The set $ \mathcal{PF}_{n}(\mathcal{S}) \coloneqq \bigcup_{\pi\in\mathfrak{S}_{n}}\mathcal{I}(\mathcal{S},\pi) $ is a subset of $\mathcal{PF}_{n+|\mathcal{S}|}$.
\end{theorem}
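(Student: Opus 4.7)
The plan is to invoke the standard order-independent characterization of parking functions: a word $\alpha$ of length $m$ is a parking function if and only if, writing its weakly increasing rearrangement as $b_1 \leq b_2 \leq \cdots \leq b_m$, one has $b_i \leq i$ for every $i \in [m]$. Since this condition depends only on the multiset of entries of $\alpha$ and not on the order in which they appear, every word obtained by inserting the elements of $\mathcal{S}$ into $\pi$ has the same sorted rearrangement. Thus it suffices to verify the inequality $b_i \leq i$ for the sorted version of the multiset $\{1, 2, \ldots, n\} \sqcup \mathcal{S}$, which has total size $m = n + |\mathcal{S}|$.

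With this setup in place, I would split into two cases based on the index $i \in [m]$. In the first case, $1 \leq i \leq n$, the multiset already contains the $i$ distinct values $1, 2, \ldots, i$ contributed by the permutation $\pi \in \mathfrak{S}_n$, so at least $i$ of the $m$ entries are $\leq i$; consequently the $i$-th smallest entry satisfies $b_i \leq i$. In the second case, $n < i \leq m$, note that every element of $\{1, \ldots, n\}$ lies in $[n+1]$ by definition, and every element of $\mathcal{S}$ lies in $[n+1]$ by hypothesis; therefore every entry of the multiset is at most $n+1 \leq i$, and in particular $b_i \leq n+1 \leq i$.

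Combining the two cases gives $b_i \leq i$ for all $i \in [m]$, so any word in $\mathcal{I}(\mathcal{S}, \pi)$ is a parking function of length $n+|\mathcal{S}|$. Taking the union over $\pi \in \mathfrak{S}_n$ yields $\mathcal{PF}_n(\mathcal{S}) \subseteq \mathcal{PF}_{n+|\mathcal{S}|}$, as claimed. There is no real obstacle here: the crucial observation is simply that the sorted characterization reduces the problem to a counting statement about the multiset of values, which is insensitive to how the insertions are carried out, and the bound $\mathcal{S} \subseteq [n+1]$ is exactly what one needs so that inserted entries never violate the threshold.
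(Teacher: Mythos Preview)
Your proof is correct and follows essentially the same approach as the paper: both invoke the sorted-rearrangement characterization of parking functions and then verify the inequalities $b_i \leq i$. The only cosmetic difference is that the paper argues via the extremal multiset $\mathcal{S}=\{n+1,\ldots,n+1\}$ and appeals to monotonicity for all other $\mathcal{S}$, whereas you give a direct case split on whether $i\leq n$ or $i>n$; your version is in fact slightly more explicit.
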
 \label{thm:insertionAreParkingFxn}

\begin{proof}
Recall the well-known fact that a list of numbers $x_1x_2\cdots x_n$ with entries in $[n]$ is a parking function if and only if its weakly increasing rearrangement $x_1'x_2'\cdots x_n'$ satisfies $x_i'\leq i$ for all $i\in[n]$.
Now notice that if $a = a_1a_2\cdots a_n$ is a permutation in $\mathfrak{S}_n$, then in the extreme case where we insert $\mathcal{S} = \{n+1, n+1, \ldots, n+1\}$, with $|\mathcal{S}|=m$, into $a$ yields a word whose weakly increasing rearrangement $a'$ satisfies $ a_{i}' \leq i $ for all $i\in[n+m]$. 
Thus, inserting $\mathcal{S}$ into $a$ would result in a parking function of length $n+m=n+|\mathcal{S}|$, as desired. 
We conclude by noting that all other multisets $\mathcal{S}$ would result in the same inequalities being satisfied.
\end{proof}
We now extend the definition of ``flattened'' to the set $\mathcal{PF}_n(\mathcal{S})$.

\begin{definition}\label{def:Flattened_S_Ins_PF}
Let $\mathcal{S}$ be a multiset whose elements are in $[n+1]$ and let $|\mathcal{S}|$ denote the cardinality of $\mathcal{S}$ with multiplicity. A \textit{ $ \mathcal{S} $-insertion flattened parking function} is defined as a $\mathcal{S}$-insertion parking function that is flattened. These $ \mathcal{S} $-insertion flattened parking functions are contained in the subset $\textnormal{flat} (\mathcal{PF}_n (\mathcal{S})) \subset \mathcal{PF}_n(\mathcal{S}) $. 
Let $\textnormal{flat}_k (\mathcal{PF}_n (\mathcal{S}))$ denote the subset of $\textnormal{flat}(\mathcal{PF}_n (\mathcal{S}))$ where each parking function has exactly $ k $ runs.
\end{definition}

By Theorem \ref{thm:insertionAreParkingFxn} we have shown that $ \mathcal{PF}_n (\mathcal{S}) \subset \textnormal{PF}_{n+|S|}$, and hence $ \textnormal{flat} (\mathcal{PF}_n (\mathcal{S}))\subset\textnormal{flat} (\textnormal{PF}_{n+|S|})$. 

Although our main focus in subsequent sections is on $\mathcal{S}$-insertion flattened parking functions for the multiset $\mathcal{S}$ consisting of all ones, the following results provide interesting bijections between certain sets of  $\mathcal{S}$-insertion flattened parking functions.

\begin{theorem}
Let $\mathcal{S}$ be a multiset with elements in $[n]\setminus\{1\}$ and let $\mathcal{S}'=\{s-1:s\in\mathcal{S}\}\cup\{1\}$. 
Then, the sets $\textnormal{flat}(\mathcal{PF}_n (\mathcal{S}))$ and $\textnormal{flat}(\mathcal{PF}_{n-1} (\mathcal{S}'))$ are in bijection.
\end{theorem}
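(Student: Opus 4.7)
The plan is to exhibit an explicit bijection
\[\phi:\textnormal{flat}(\mathcal{PF}_n(\mathcal{S}))\longrightarrow \textnormal{flat}(\mathcal{PF}_{n-1}(\mathcal{S}'))\]
given by leaving entries equal to $1$ fixed and subtracting one from every other entry. The pivotal first step is a structural lemma: any $\alpha \in \textnormal{flat}(\mathcal{PF}_n(\mathcal{S}))$ satisfies $\alpha_1 = 1$. Because $\mathcal{S} \subseteq [n]\setminus\{1\}$ and the underlying permutation contains exactly one copy of $1$, the value $1$ occurs in $\alpha$ exactly once. A run being a maximal weak-ascent chain, this unique $1$ cannot lie in the interior of its run (any preceding entry in the run would be $\leq 1$, forcing a second $1$), so $1$ must be the leading value of some run; but since leading values are weakly increasing, every earlier leading value must also be $\leq 1$ and hence equal to $1$, contradicting uniqueness unless $1$ is the leading value of the first run. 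An analogous argument shows every $\beta \in \textnormal{flat}(\mathcal{PF}_{n-1}(\mathcal{S}'))$ also satisfies $\beta_1 = 1$; now $\beta$ may contain several $1$s, but the leading value of the first run is still forced to be $1$.

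With this in hand, set $\phi(\alpha)_1 = 1$ and $\phi(\alpha)_i = \alpha_i - 1$ for $i \geq 2$, and define $\psi$ by $\psi(\beta)_1 = 1$ and $\psi(\beta)_i = \beta_i + 1$ for $i \geq 2$. A multiset count shows $\phi(\alpha)$ has multiset $[n-1] \cup \mathcal{S}'$, so $\phi(\alpha) \in \mathcal{PF}_{n-1}(\mathcal{S}')$ by Theorem~\ref{thm:insertionAreParkingFxn}. To see $\phi(\alpha)$ is flattened, notice that the structural lemma forbids the pattern $x, 1$ at any positions $i, i+1$ of $\alpha$; in particular, every descent in $\alpha$ is between two entries both $\geq 2$, and a uniform downward shift preserves strict inequality, so descents survive as descents. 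Weak ascents among entries $\geq 2$ clearly survive, and the only subtle case is positions $1, 2$: there a strict ascent $1 < \alpha_2$ may collapse to an equality $1 = 1$ under $\phi$ (precisely when $\alpha_2 = 2$), but positions $1$ and $2$ still remain in the same run. Hence the run partition and the weakly increasing sequence of leading values of $\alpha$ transfer to $\phi(\alpha)$.

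The symmetric check, now using $\beta_1 = 1$ and the fact that $\psi$ converts a possible weak ascent $1, 1$ at positions $1, 2$ back into a strict ascent $1, 2$, shows $\psi$ maps $\textnormal{flat}(\mathcal{PF}_{n-1}(\mathcal{S}'))$ into $\textnormal{flat}(\mathcal{PF}_n(\mathcal{S}))$, and $\phi \circ \psi$ and $\psi \circ \phi$ are the identity by direct computation. I expect the main obstacle to be the careful bookkeeping confirming that the run partition is preserved: the only way decrementing could accidentally fuse two adjacent runs of $\alpha$ would be via a descent $2, 1$ turning into a weak ascent $1, 1$, and the structural lemma pinning $1$ at position $1$ is precisely what rules this out.
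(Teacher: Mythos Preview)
Your proof is correct and follows essentially the same route as the paper: both define the bijection by fixing the first entry and shifting every subsequent entry by $\pm 1$, then check that the multiset of entries and the run structure are preserved. Your write-up is in fact more careful than the paper's in one respect: you explicitly isolate and prove the structural lemma that any $\alpha\in\textnormal{flat}(\mathcal{PF}_n(\mathcal{S}))$ has $\alpha_1=1$ (and likewise for $\beta$), whereas the paper uses this fact implicitly without justification when it asserts that the image contains ``only one one, namely $b_1$.''
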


\begin{proof}
Let $|\mathcal{S}|=m$ and note that $|\mathcal{S}'|=m+1$. Let $a=a_1a_2\cdots a_{n+m}\in \textnormal{flat}(\mathcal{PF}_n (\mathcal{S}))$. Define the map $\psi:\textnormal{flat}(\mathcal{PF}_n (\mathcal{S})) \rightarrow 
\textnormal{flat}(\mathcal{PF}_{n-1} (\mathcal{S}'))$, by $\psi(a)=b_1b_2\cdots b_{n+m}$,  where 
\[b_i=\begin{cases}a_1, & \text{ if $i=1$;}\\a_i-1, & \text{ if $1<i\leq n+m$}.\end{cases}\]

We note that the function is well defined, as $\psi(a)$ maps to a unique parking function. We also note that our function definition preserves flattenedness, since descents $a_i \geq a_{i+1}$ implies that $b_{i}\geq b_{i+1}$ is also a descent.

To show the map $\psi$ is onto, let $b=b_1b_2\cdots b_{n+m}\in \textnormal{flat}(\mathcal{PF}_{n-1} (\mathcal{S}'))$. Observe that $b$ is a parking function built from a permutation on $[n-1]$, with all $m+1$ elements of $\mathcal{S}'$ inserted into the permutation. 
Let $b'=b_1'b_2'\cdots b_{n+m}'$ be such that
\[b'_i=\begin{cases}b_1, & \text{ if $i=1$;}\\b_i+1, & \text{ if $1<i\leq n+m$}.\end{cases}\]
Note that this new parking function, $b'$, contains only one one, namely $b_1=b_i'$. 
By construction of $\mathcal{S}'$, and the fact that we now have at least one instance of all the numbers of $[n]$, $b'$ is an element of $\textnormal{flat}(\mathcal{PF}_n (\mathcal{S}))$. 
Now note that $\psi(b') = b$, as desired. 

To show the map $\psi$ is one-to-one suppose that $\psi(a) = \psi(c)=b_1b_2\cdots b_{n+m}$, for $a=a_1a_2\cdots a_{n+m}$ and $c=c_1c_2\cdots c_{n+m}$ in $\textnormal{flat}(\mathcal{PF}_{n} (\mathcal{S}))$. By definition of $\psi$ this means that $b_1=a_1=c_1$ and for all $1<i\leq n+m$, $b_i=a_i-1=c_i-1$, which implies that $a_i=c_i$. Thus $a_i=c_i$ for all $1\leq i\leq n+m$, establishing that $a=c$, as desired. 

Therefore, we have shown that $\psi$ is a bijection between the sets $\textnormal{flat}(\mathcal{PF}_n (\mathcal{S}))$ and $\textnormal{flat}(\mathcal{PF}_{n-1} (\mathcal{S}'))$, as claimed.
\end{proof}

\begin{theorem}\label{thm:n(n-1)Bijec}
Let $\mathcal{S}$ be a multiset with elements in $[n-2]$. 
Then the sets $\textnormal{flat}(\mathcal{PF}_n (\mathcal{S}\cup\{n\}))$ and $\textnormal{flat}(\mathcal{PF}_{n} (\mathcal{S}\cup\{n-1\}))$ are in bijection. 
\end{theorem}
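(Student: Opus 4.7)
The plan is to construct an explicit bijection $\varphi\colon\textnormal{flat}(\mathcal{PF}_n(\mathcal{S}\cup\{n\}))\to\textnormal{flat}(\mathcal{PF}_n(\mathcal{S}\cup\{n-1\}))$ by exchanging the roles of $n$ and $n-1$. My natural first attempt is the involution $\varphi_0$ that sends each occurrence of $n$ to $n-1$ and vice versa. This map automatically lands in $\mathcal{PF}_n(\mathcal{S}\cup\{n-1\})$ at the level of multisets (since every element of $\mathcal{S}$ is at most $n-2$) and preserves the parking function property, so the substantive task is to verify that $\varphi_0(\alpha)$ is flattened whenever $\alpha$ is.

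An adjacency $(a_i,a_{i+1})$ is unaffected by $\varphi_0$ unless both entries lie in $\{n-1,n\}$, since every value in $[n-2]$ is strictly less than both $n-1$ and $n$. The only pairs that actually change are $(n,n-1)\mapsto(n-1,n)$ (merging two runs at that location) and $(n-1,n)\mapsto(n,n-1)$ (splitting a run). A leading-value bookkeeping shows that merges always preserve flattenedness, because the pre-merge leadings satisfy $\ell_j\leq n-1\leq\ell_{j+2}$. Splits, however, introduce a new run with leading value $n-1$, which is only compatible with flattenedness when the subsequent original run has leading $\geq n-1$. So the failure cases of $\varphi_0$ are precisely those $\alpha$ in which $n-1$ is immediately followed by $n$ within some run $R_j$ whose successor run has leading $<n-1$.

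For such failure $\alpha$'s I would define $\varphi(\alpha)$ by a local modification rather than a full swap. If both $n$'s of $\alpha$ lie in $R_j$ (so $R_j$ ends with the pattern $(n-1)\,n\,n$), change only the first of the two $n$'s to $n-1$, producing the weakly increasing pattern $(n-1)(n-1)\,n$ with the same leading value. Otherwise the other $n$ lies in a different run, and I change that other $n$ to $n-1$ instead. In either case a direct check shows the result is still flattened: the unique $n-1$ of $\alpha$ occupies a fixed position in $R_j$, so the altered $n$ has all its immediate neighbours at most $n-2$, and its run remains weakly increasing with its original leading value.

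The main obstacle will be establishing that this construction is a bijection. I would prove it via a symmetric failure description: a $\beta\in\textnormal{flat}(\mathcal{PF}_n(\mathcal{S}\cup\{n-1\}))$ has non-flattened swap-image exactly when it contains the mirror pattern $(n-1)\,n$ inside some run with small-leading successor, together with either a $(n-1)(n-1)\,n$ pattern in that run or an isolated $n-1$ elsewhere. The two sub-cases on the $\beta$ side match the two sub-cases of the fix, so the inverse $\varphi^{-1}$ is given by the analogous reverse modification; the swap-good and swap-bad strata are put into bijection separately, and the two pieces do not collide. Verifying this coordination---and in particular that each swap-bad $\alpha$ maps under the fix to a swap-bad $\beta$---is the most delicate part of the argument.
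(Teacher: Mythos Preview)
Your approach is genuinely different from the paper's and can be made to work, but you have left the substantive part---the bijectivity of the patched map---as a promise rather than a proof. The paper avoids the swap-then-repair strategy entirely: it gives a single rule based only on the \emph{relative order} (not adjacency) of the three special entries. If the two copies of $n-1$ and the one $n$ appear in positions $i<j<k$ in the order $(n-1,n-1,n)$, replace the entry at $j$; for the order $(n-1,n,n-1)$ replace at $i$; for $(n,n-1,n-1)$ replace at $k$. The inverse is the symmetric three-case rule on the $\{n\}$ side. Because every other entry is at most $n-2$, swapping a single value between $n-1$ and $n$ never changes the type of any adjacency, so the run decomposition and all leading values are preserved outright; well-definedness is then a one-line check and the inverse pairing is read off from the case structure.

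Your map is a \emph{different} bijection (e.g.\ on $12344$ the paper sends it to $12334$ while your swap gives $12433$), and it is a valid one: your failure characterization---$(n-1)n$ sitting in a non-final run---is correct, and the local repair preserves the run structure since the altered $n$ always has neighbours $\le n-2$. But turning this into a bijection still requires you to show that swap-good $\alpha$'s land on swap-good $\beta$'s (immediate from $\varphi_0^2=\mathrm{id}$), that swap-bad $\alpha$'s land on swap-bad $\beta$'s under the repair, that the two repair sub-cases on the $\alpha$ side match the two on the $\beta$ side, and that the repairs invert each other. All of this is true, but it is exactly the ``delicate coordination'' you flag and do not carry out; the paper's order-based rule buys you freedom from that entire layer of case analysis.
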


\begin{proof}
Let $|\mathcal{S}| = m$, where $ s\in [n-2]$ for all $s\in \mathcal{S}$. Let $a=a_1a_2\cdots a_{n+m+1} \in \textnormal{flat}(\mathcal{PF}_n (\mathcal{S}\cup\{n-1\}))$. Note that $a$ contains the value $n-1$ exactly twice, and our approach is to replace one of these with the value $n$. To do this we define the map $\psi : \textnormal{flat}(\mathcal{PF}_n(\mathcal{S} \cup \{n-1\}))
\rightarrow \textnormal{flat}(\mathcal{PF}_n(\mathcal{S} \cup \{n\}))$ 
with 
$\psi(a)=b_1b_2\cdots b_{n+m+1}$ where
if $1\leq i<j<k\leq n+m+1$ and 
\begin{enumerate}
\item $a_i=n-1$, $a_j=n-1$, and $a_k=n$, then $b_s=a_s$ for all $s\neq j$, and $b_j=n$, 
\item $a_i=n-1$, $a_j=n$, and $a_k=n-1$, then $b_s=a_s$ for all $s\neq i$, and $b_i=n$,
\item $a_i=n$, $a_j=n-1$, and $a_k=n-1$, then $b_s=a_s$ for all $s\neq k$, and $b_k=n$.
\end{enumerate}

To show that the map $\psi$ is well defined, first 
note that the entries in $\psi(a)$ are those in the multiset $[n]\cup\mathcal{S}\cup\{n\}$. In $\psi(a)$ we replaced a single instance of $n-1$ in $a$ with the value $n$. This process keeps all weak ascents (and descents) in $a$ as weak ascents (and descents) in $\psi(a)$. This process also keeps the relative order of the leading value of the runs. Thus ensuring that $\psi(a)$ is in $\textnormal{flat}(\mathcal{PF}_n(\mathcal{S} \cup \{n\}))$.

We now construct the inverse. Let $b\in \textnormal{flat}(\mathcal{PF}_n(\mathcal{S} \cup \{n\}))$. Note that $b$ contains two instances of the value $n$ and one instance of the value $n-1$.
Define the map $\phi : \textnormal{flat}(\mathcal{PF}_n(\mathcal{S} \cup \{n\}))
\rightarrow \textnormal{flat}(\mathcal{PF}_n(\mathcal{S} \cup \{n-1\}))$ 
with 
$\phi(b)=a_1a_2\cdots a_{n+m+1}$ 
where if $1\leq i<j<k\leq n+m+1$ and 
\begin{enumerate}
\item[~]Case 1. $b_i=n-1$, $b_j=n$, and $b_k=n$, then $a_s=b_s$ for all $s\neq j$, and $a_j=n-1$, 
\item[~]Case 2. $b_i=n$, $b_j=n$, and $b_k=n-1$, then $a_s=b_s$ for all $s\neq i$, and $a_i=n-1$,
\item[~]Case 3. $b_i=n$, $b_j=n-1$, and $b_k=n$, then $a_s=b_s$ for all $s\neq k$, and $a_k=n-1$.
\end{enumerate}

To show that the map $\phi$ is well defined, first 
note that the entries in $\phi(b)$ are those in the multiset $[n]\cup\mathcal{S}\cup\{n-1\}$. In $\phi(b)$ we replaced a single instance of $n$ in $b$ with the value $n-1$. This process keeps all weak ascents (and descents) in $b$ as weak ascents (and descents) in $\phi(b)$. This process also keeps the relative order of the leading value of the runs. Thus ensuring that $\phi(b)$ is in $\textnormal{flat}(\mathcal{PF}_n(\mathcal{S} \cup \{n-1\}))$.

To conclude, note that $\phi$ is the inverse of $\psi$. This holds because for all $1\leq x\leq 3$, if $b$ is as in Case $x$, then $\phi(b)=a$ gives $a$ defined as in $(x)$ above, and $\psi(a)$ returns $b$, as desired.
\end{proof}

In general, we can use the reasoning in Theorem \ref{thm:n(n-1)Bijec} to produce the following bijection when there are only two runs.

\begin{theorem} \label{bijection2runs}
If $ 2 \leq \ell \leq n $, then the sets $ \textnormal{flat}_{2}(\mathcal{PF}_{n}(\{\ell-1\})) $ and $\textnormal{flat}_{2}(\mathcal{PF}_{n}(\{\ell\}))$ are in bijection.
\end{theorem}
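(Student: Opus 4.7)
The plan is to adapt the three-case substitution bijection from the proof of Theorem \ref{thm:n(n-1)Bijec} to the values $\ell-1$ and $\ell$ in place of $n-1$ and $n$. A generic element $\alpha\in\textnormal{flat}_{2}(\mathcal{PF}_{n}(\{\ell-1\}))$ has multiset of entries $[n]\cup\{\ell-1\}$, so exactly two positions of $\alpha$ carry $\ell-1$ and exactly one carries $\ell$. Letting $i<j<k$ denote these three positions, I would define a map $\Psi$ that replaces a single copy of $\ell-1$ by $\ell$ according to the configuration of $(\alpha_i,\alpha_j,\alpha_k)$: replace $\alpha_j$ if this triple is $(\ell-1,\ell-1,\ell)$, replace $\alpha_i$ if it is $(\ell-1,\ell,\ell-1)$, and replace $\alpha_k$ if it is $(\ell,\ell-1,\ell-1)$.

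The argument then proceeds in three steps. First, I would check that $\Psi$ preserves every weak ascent and every descent of $\alpha$. The verification is local: swapping $\ell-1\leftrightarrow\ell$ at a position $p$ only affects the comparisons involving $\alpha_{p-1}$ and $\alpha_{p+1}$, and the case split pins down exactly where the other copies of $\ell-1$ and $\ell$ sit, so each case can be checked to preserve all such comparisons. Consequently the number of runs is preserved, and in particular the image still has exactly two runs. Second, I would check that $\Psi(\alpha)$ is flattened, which, because there are only two runs, reduces to the single inequality $\mathrm{lead}(R_1)\le\mathrm{lead}(R_2)$ at the unique descent position. Third, I would construct the inverse $\Phi$ by the symmetric three-case rule (starting from the positions of the two copies of $\ell$ and the single copy of $\ell-1$ in the image, and substituting $\ell$ by $\ell-1$) and verify that $\Psi\circ\Phi=\Phi\circ\Psi=\mathrm{id}$.

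I expect the second step to be the main obstacle. In the setting of Theorem \ref{thm:n(n-1)Bijec} the two swapped values were the two maximal entries, which made their appearance as leading values of runs very restricted and the flattenedness check nearly automatic. For general $\ell$ in the range $2\le\ell\le n$, the values $\ell-1$ and $\ell$ may sit anywhere in the value spectrum and in particular may be the leading value of either run; ruling out the possibility that the substitution flips the order $\mathrm{lead}(R_1)\le\mathrm{lead}(R_2)$ will require the presence of only two runs---and hence a unique descent---to be used in an essential way, together with the fact that $\ell-1$ and $\ell$ differ by exactly one. I would expect the delicate subcase to be when the altered position is the leading position of the first run, where the original flattened inequality combined with the single-descent structure should be what forces the post-substitution leading-value comparison to remain valid.
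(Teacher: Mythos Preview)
Your proposal is essentially the paper's proof: the same three-case substitution map and its mirror-image inverse, verified case by case. The one adjustment worth flagging is where the two-run hypothesis enters. You locate it in step~2 (the flattenedness inequality), but in the paper's argument it is already needed in what you call step~1: the case split by itself does not pin down the neighbors of the altered position. In cases~2 and~3 the paper uses the fact that there are only two runs to force adjacency of the relevant values (for instance, in case~2 one deduces $j=i+1$, since otherwise $a_i$, $a_j$, $a_k$ would lie in three distinct runs), and only with that adjacency in hand does the local ascent/descent check go through. Once the run structure is shown to be preserved in this way, the flattenedness check is almost immediate---the first entry stays equal to $1$ and the leader of the second run is unchanged---so the ``delicate subcase'' you anticipate (the altered position being the leader of the first run) does not in fact arise when $\ell\ge 3$.
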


\begin{proof}
Let $a=a_1a_2\cdots a_{n+1} \in \textnormal{flat}_2(\mathcal{PF}_n (\{\ell-1\}))$ for some $ 2 \leq \ell \leq n $. Note that $a$ contains the value $\ell-1$ exactly twice, and our approach is to replace one of these with the value $\ell$. To do this we define the map $\psi : \textnormal{flat}_2(\mathcal{PF}_n (\{\ell-1\}))
\rightarrow \textnormal{flat}_2(\mathcal{PF}_n (\{\ell\}))$ 
with 
$\psi(a)=b_1b_2\cdots b_{n+1}$ where
if $1\leq i<j<k\leq n+1$ and 
\begin{enumerate}
\item $a_i=\ell-1$, $a_j=\ell-1$, and $a_k=\ell$, then $b_s=a_s$ for all $s\neq j$, and $b_j=\ell$, 
\item $a_i=\ell-1$, $a_j=\ell$, and $a_k=\ell-1$, then $b_s=a_s$ for all $s\neq i$, and $b_i=\ell$,
\item $a_i=\ell$, $a_j=\ell-1$, and $a_k=\ell-1$, then $b_s=a_s$ for all $s\neq k$, and $b_k=\ell$.
\end{enumerate}

To show that the map $\psi$ is well-defined, first 
note that the entries in $\psi(a)$ are those in the multiset $[n]\cup\{\ell\}$. In $\psi(a)$ we replaced a single instance of $\ell-1$ in $a$ with the value $\ell$. 
By construction we know that the resulting list $\psi(a)$ is unique. 
We now claim that $\psi(a)$ has exactly two runs. 
We argue based on 
which case $a$ satisfies.

In item 1: replacing $a_j=\ell-1$ with the value $\ell$ means that, if $a_j$
 ended a run, then so does $b_j=\ell$.
 If  $a_j$ did not end a run, that means that 
 $a_j=\ell-1< a_{j+1}$ and $a_{j+1}\geq \ell$, which implies that replacing $a_j=\ell-1$ with $\ell$ also does not end the run. Thus, in case 1, the number of runs remains the same. 

In item 2: we are replacing $a_i=\ell-1$ with the value $\ell$. Note that $a$ begins with two runs. 
If $a_i=\ell-1$ and $a_j=\ell$ are not in the same run, then we currently have at least two runs. However, $a_k=\ell-1$ and $k>j$ implies that $a_k$ would not be in the same run as $a_j$. Hence, $a$ would have a minimum of three distinct runs, a contradiction. 
Thus, $a_i=\ell-1$ and $a_j=\ell$ must be in the same run, which implies they must be consecutive. That is, we know $j=i+1$. In this case, replacing the subword $(\ell-1)\ell$ with $\ell\ell$ keeps the number of runs at two runs, as desired.

In item 3:  we are replacing $a_k=\ell-1$ with the value $\ell$. Note that to begin $a$ had two runs. 
This means that $a_i=\ell$ is in the first run, while both instances of $\ell-1$, in this case $a_j$  and $a_k$ are in the same run, and the only way for this to be true is for them to be consecutive. 
Moreover, $a_{k+1}\geq \ell$ as the opposite inequality would imply there are at least three runs. Moreover, $a_{k+1}>\ell$ as the only instance of $\ell$ in $a$ appeared at index $i$. 
Hence in $a$ we replace the subword $(\ell-1)(\ell-1)$ with $(\ell-1)\ell$, which would not introduce a new run.

Therefore, under all assumptions, $\psi(a)$ results in a unique element in $\textnormal{flat}_2(\mathcal{PF}_n(\{\ell\}))$.

Therefore, this process keeps all weak ascents (and descents) in $a$ as weak ascents (and descents) in $\psi(a)$. This process also keeps the relative order of the leading value of the runs, ensuring that $\psi(a)$ is in $\textnormal{flat} _2(\mathcal{PF}_n( \{\ell\}))$.

We now construct the inverse. 
Let $b$ be in $ \textnormal{flat} _2(\mathcal{PF}_n( \{\ell\}))$. 
Note that $b$ contains two instances of the value ${\ell}$ and one instance of the value $\ell-1$.
Define the map $\phi : \textnormal{flat}_2(\mathcal{PF}_n(\{\ell\}))
\rightarrow \textnormal{flat}_2(\mathcal{PF}_n( \{\ell-1\}))$ 
with 
$\phi(b)=a_1a_2\cdots a_{n+1}$ 
where if $1\leq i<j<k\leq n+1$ and 
\begin{enumerate}
\item[~]Case 1. $b_i=\ell-1$, $b_j=\ell$, and $b_k=\ell$, then $a_s=b_s$ for all $s\neq j$, and $a_j=\ell-1$, 
\item[~]Case 2. $b_i=\ell$, $b_j=\ell$, and $b_k=\ell-1$, then $a_s=b_s$ for all $s\neq i$, and $a_i=\ell-1$,
\item[~]Case 3. $b_i=\ell$, $b_j=\ell-1$, and $b_k=\ell$, then $a_s=b_s$ for all $s\neq k$, and $a_k=\ell-1$.
\end{enumerate}

To show that the map $\phi$ is well-defined, first 
note that the entries in $\phi(b)$ are those in the multiset $[n]\cup\{\ell-1\}$. In $\phi(b)$ we replaced a single instance of $\ell$ in $b$ with the value $\ell-1$. As before, by construction, we know that $\phi(b)$ is unique. We now claim that $\phi(b)$ has exactly two runs. We argue based on which case $b$ satisfies.

In Case 1: suppose first that $b_j = \ell$ ends the first run. Then since $(\ell-1)$ is to the right, we must have the subword $(\ell-1)\ell~b_{j+1}$, where $b_{j+1}<(\ell -1)$, as there is only one instance of $\ell-1$ in $b$. This subword maps to $(\ell-1)(\ell-1)~b_{j+1}$, which preserves the run, and the descent in position $y$. 
Now suppose that $b_j$ is in the middle of a run. Then we have the subword $\ell~b_{j+1}$, where $b_{j+1}\geq \ell$. This subword maps to $(\ell-1)b_{j+1}$, which also preserves the run. Thus, in case 1, the number of runs remains the same.

In Case 2: we are replacing $b_{i} = \ell$ with the value $\ell-1$.  Note that $b$ has two runs. As before, if $b_i$ and $b_j=\ell$ are in separate runs, then we must have at least three runs in $b$. Thus, $b_i$ and $b_j$ must be in the same run, which implies that they are consecutive. That is, $j=i+1$. Furthermore, $b_{i-1}<\ell-1$, as the only occurrence of $\ell-1$ appears to the right of $b_j$. In this case, the subword $b_{i-1}\ell\ell$ maps to $b_{i-1}(\ell-1)\ell$, which preserves the run. 

In Case 3: we are replacing $b_k=\ell$ with the value $\ell-1$. Note that $b$ has two runs. This means that $b_j=\ell-1$ and $b_k$ must be in the second run, while $b_i$ is in the first run. Because $b_j=\ell-1$ and $b_k=\ell$ are in the same run, we have $k=j+1$. Moreover, $b_{k+1}>\ell$, as the only other instance of $\ell$ appears to the left of $b_k$. Hence the subword $(\ell-1)\ell b_{k+1}$ maps to $(\ell-1)(\ell-1)b_{k+1}$, which preserves the run.  

This process keeps all weak ascents (and descents) in $b$ as weak ascents (and descents) in $\phi(b)$. This process also keeps the relative order of the leading value of the runs, ensuring that $\phi(b)$ is in $\textnormal{flat}(\mathcal{PF}_n(\{\ell-1\}))$.

To conclude note that $\phi$ is the inverse of $\psi$. This holds as for all $1\leq x\leq 3$, if $b$ is as in Case $x$, then $\phi(b)=a$ gives $a$ defined as in ($x$) above, and $\psi(a)$ returns $b$, as desired.
\end{proof}

 We remark that 
 Theorem \ref{bijection2runs} is not true for all $2\leq \ell \leq n$ if the number of runs is not exactly two. 
For example, if $n=4$ and $\ell = 3$, then there are two elements in $\textnormal{flat}_3(\mathcal{PF}_{4}(\{2\}))$: $14232$ and $13242$. Using the map defined in the proof of Theorem \ref{bijection2runs}, we have $\psi(14232) = 14332$, which is not flattened. By restricting our map to only two runs, we know exactly where the  values $\ell-1$ and $\ell$ are with respect to each other, and our map preserves the number of runs. In Theorem \ref{thm:n(n-1)Bijec}, we did not have to worry about this restriction to the number of runs, as the values $n-1$ and $n$ were extreme values that allowed us to preserve all ascents and descents we started with.

We now present the following enumerative result.
\begin{theorem}\label{enum result}
If $n\geq 3$ and $\ell\geq 2$, then $|\textnormal{flat} _2(\mathcal{PF}_n(\{ \ell\}))|=\sum_{i=0}^{n-3} 2^{i} (n-1-i)$. 
\end{theorem}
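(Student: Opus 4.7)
The plan is to count $|\textnormal{flat}_2(\mathcal{PF}_n(\{\ell\}))|$ directly, viewing each element as an ordered partition $(V_1,V_2)$ of the multiset $V=[n]\cup\{\ell\}$ into the multisets of values appearing in its two runs. Since each run is weakly increasing, the parking function is recovered from $(V_1,V_2)$ by sorting each multiset, and the conditions defining a two-run flattened parking function translate into: (i) $V_1,V_2\neq\emptyset$; (ii) $\max(V_1)>\min(V_2)$ (a descent separates the runs); and (iii) $\min(V_1)\leq\min(V_2)$. Because $\ell\geq 2$, the value $1$ occurs only once in $V$, so (iii) forces $1\in V_1$ and then becomes automatic.

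I would count by subtraction. First, the number of ordered partitions with $1\in V_1$ satisfying (i) and (iii) is $3\cdot 2^{n-2}-1$: each of the $n-2$ values in $[n]\setminus\{1,\ell\}$ chooses $V_1$ or $V_2$ independently, the two copies of $\ell$ contribute a factor of three (zero, one, or two of them in $V_1$), and one subtracts the sole partition with $V_2=\emptyset$.

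Next I would subtract the partitions that violate (ii). The strict case $\max(V_1)<\min(V_2)$ forces $V_1=\{v\in V:v\leq k\}$ for some threshold $k\in\{1,\ldots,n-1\}$, yielding $n-1$ partitions. The equality case $\max(V_1)=\min(V_2)$ requires a value to appear in both $V_1$ and $V_2$, which must be $\ell$ (the unique value of multiplicity $\geq 2$ in $V$); this contributes the single partition $V_1=\{1,2,\ldots,\ell\}$, $V_2=\{\ell,\ell+1,\ldots,n\}$. Removing these $n$ bad partitions leaves $3\cdot 2^{n-2}-n-1$.

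Finally I would verify the closed-form identity $\sum_{i=0}^{n-3}2^i(n-1-i)=3\cdot 2^{n-2}-n-1$ using the standard evaluations $\sum_{i=0}^{m}2^i=2^{m+1}-1$ and $\sum_{i=0}^{m}i\cdot 2^i=(m-1)2^{m+1}+2$ after the substitution $m=n-3$. The main subtlety is the equality case in the descent analysis, where it is essential that $\ell$ is the \emph{unique} repeated value in $V$, so that any value shared between $V_1$ and $V_2$ must equal $\ell$. This is exactly why the hypothesis $\ell\geq 2$ is needed (ensuring $1$ is not duplicated, so that (iii) cleanly pins down $1\in V_1$) and why the argument tacitly assumes $\ell\leq n$ (so that $[n]\setminus\{1,\ell\}$ has exactly $n-2$ elements).
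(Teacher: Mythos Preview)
Your argument is correct and takes a genuinely different route from the paper's. The paper proceeds in two stages: first it establishes the bijection of Theorem~\ref{bijection2runs} between $\textnormal{flat}_2(\mathcal{PF}_n(\{\ell-1\}))$ and $\textnormal{flat}_2(\mathcal{PF}_n(\{\ell\}))$ for each $2\le\ell\le n$ (via a three-case swapping map), and then it proves the formula only for the base case $\ell=2$ by induction on $n$, inserting the new largest value $n+1$ at the end of one of the two runs or into the unique one-run word (Lemma~\ref{thm:hooknumbersbijection}). Your proof bypasses both the bijection and the induction: by encoding a two-run flattened word as the ordered multiset partition $(V_1,V_2)$ and counting by complement, you obtain the closed form $3\cdot 2^{n-2}-n-1$ directly and uniformly in $\ell$. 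This is shorter and more elementary, and it makes the closed form explicit, whereas the paper leaves it as the unsimplified sum. On the other hand, the paper's route yields the bijection of Theorem~\ref{bijection2runs} and the hook-partition interpretation (Lemma~\ref{prop:help_from_A}) as byproducts, which are of independent interest. Your observation that the argument tacitly requires $\ell\le n$ is apt: the paper's own derivation, depending as it does on Theorem~\ref{bijection2runs} (stated for $2\le\ell\le n$), carries the same implicit restriction.
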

The proof of Theorem \ref{enum result} follows from the bijection in Theorem \ref{bijection2runs} and the following technical results, which provide a connection to tableaux enumerations. 

\begin{lemma}[\seqnum{A077802}]\label{prop:help_from_A}
The sum of products of parts increased by one in hook partitions~is
\begin{equation*}
\sum_{i=0}^{m-1} 2^i (m+1-i).
\end{equation*}
\end{lemma}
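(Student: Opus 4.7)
The plan is a direct enumeration of the hook partitions of $m$. Recall that a hook partition of $m$ is one of the form $(a, 1^b)$ with $a \geq 1$ and $a + b = m$; these are in bijection with the integers $b \in \{0, 1, \ldots, m-1\}$, where the arm length $a = m-b$ and $b$ records the number of parts equal to $1$. This parameterization is already the right shape to match the claimed sum on the right-hand side.

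First I would fix such a hook $(a, 1^b)$ and compute the product of its parts each increased by one. Doing so turns the multiset of parts into $\{a+1, 2, 2, \ldots, 2\}$ with exactly $b$ copies of $2$, whose product is
\[
(a+1)\cdot 2^b \;=\; (m-b+1)\cdot 2^b.
\]
Then I would sum over all hook partitions of $m$, i.e., over $b = 0, 1, \ldots, m-1$, and relabel $i=b$ to obtain
\[
\sum_{b=0}^{m-1}(m-b+1)\,2^b \;=\; \sum_{i=0}^{m-1} 2^i\,(m+1-i),
\]
which is exactly the claimed closed form.

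There is no substantive obstacle here; the statement is essentially a verification against the definition encoded by \seqnum{A077802}. The only point that requires any care is the range of $b$: the condition $a \geq 1$ is precisely what forces $b \leq m-1$, and this upper endpoint $b = m-1$ corresponds to the all-ones partition $(1^m)$, which must be included exactly once. Once this is checked, the identity follows immediately from the product computation above.
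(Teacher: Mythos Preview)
Your proof is correct and takes essentially the same approach as the paper: both parameterize the hook partitions of $m$ by the number of parts equal to $1$ (your $b$, the paper's $i$), compute the product of the incremented parts as $2^b(m-b+1)$, and sum over $b=0,\ldots,m-1$. The only cosmetic difference is your use of the $(a,1^b)$ notation versus the paper's description in terms of rows of boxes.
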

\begin{proof}
A hook partition on $m$ boxes has $i$ rows of exactly one box, such that $0 \leq i \leq m-1$, and one row of length $m-i$. In increasing each part by one, we add one box to each row, to get $i$ rows of length two and a single row of length $m-i+1$. Then, the product of the lengths of the rows is given by $2^i (m-i+1)$. Finally, we sum the products over all possible hook partitions of size $m$. Note that each hook partition corresponds to exactly one value of $i$, giving the sum:
$\sum_{i=0}^{m-1} 2^i (m+1-i)$.
\end{proof}

\begin{lemma}\label{thm:hooknumbersbijection}
If $n\geq 3$, then $|\textnormal{flat} _2(\mathcal{PF}_n(\{ 2\}))|=\sum_{i=0}^{n-3} 2^{i} (n-1-i)$.
\end{lemma}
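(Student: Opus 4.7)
The plan is to prove the identity by counting $\textnormal{flat}_2(\mathcal{PF}_n(\{2\}))$ directly and then reconciling the resulting closed form with $\sum_{i=0}^{n-3} 2^i(n-1-i)$ via a short algebraic computation (which, via Lemma \ref{prop:help_from_A}, is exactly the hook-partition interpretation we want).

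First I would pin down the structure. Any $\alpha \in \textnormal{flat}_2(\mathcal{PF}_n(\{2\}))$ is a word of length $n+1$ over the multiset $\{1,2,2,3,4,\ldots,n\}$. In a flattened parking function the leading values of the runs form a weakly increasing sequence, so the leading entry $w_1$ of the first run is the global minimum of the word; since the only $1$ appears exactly once, $w_1 = 1$. I would then encode each such $\alpha$ by the pair of multisets $(A,B)$, where $A$ is the entries of $R_1$ other than $w_1$ and $B$ is the entries of $R_2$. Because each run is weakly increasing, $(A,B)$ determines $\alpha$ uniquely (sort each piece), and the valid pairs are precisely those with $A\sqcup B=\{2,2,3,\ldots,n\}$, both $A$ and $B$ nonempty, and $\max A>\min B$ (the strict descent between the two runs).

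Next I would split into three cases based on how many of the two copies of $2$ lie in $A$. Writing $S=A\cap\{3,\ldots,n\}$ (so $|S|$ ranges over subsets of an $(n-2)$-element set):
\textbf{(a)} If neither $2$ lies in $A$, then $\min B=2$ and the descent condition forces only $S\neq\emptyset$, giving $2^{n-2}-1$ configurations.
\textbf{(b)} If exactly one $2$ lies in $A$, again $\min B=2$ and we need $\max A\geq 3$, equivalently $S\neq\emptyset$; this contributes another $2^{n-2}-1$.
\textbf{(c)} If both $2$'s lie in $A$, then $B\subseteq\{3,\ldots,n\}$ and the descent condition becomes $\max S>\min B$. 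This excludes exactly the $S$ that are initial segments of $\{3,\ldots,n\}$ (including $S=\emptyset$, since then $\max A=2<\min B$); there are $n-2$ such excluded configurations together with the forbidden $S=\{3,\ldots,n\}$, giving $(2^{n-2}-1)-(n-2)=2^{n-2}-n+1$ valid pairs.
Summing yields $|\textnormal{flat}_2(\mathcal{PF}_n(\{2\}))|=3\cdot 2^{n-2}-n-1$.

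Finally I would verify the algebraic identity
\[
\sum_{i=0}^{n-3}2^{i}(n-1-i)=3\cdot 2^{n-2}-n-1,
\]
which follows from $\sum_{i=0}^{m-1}2^i=2^m-1$ and $\sum_{i=0}^{m-1}i\cdot 2^i=(m-2)2^m+2$ with $m=n-2$. Combined with Lemma \ref{prop:help_from_A}, this places the count into the desired hook-partition form.

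The main obstacle is case \textbf{(c)}: when both $2$'s are in $R_1$, the value $\max A$ is governed by $\max S$ rather than by $2$, so I must translate the descent condition $\max A>\min B$ into the correct combinatorial condition ``$S$ is not an initial segment of $\{3,\ldots,n\}$''. Miscounting the excluded initial segments (or forgetting the boundary cases $S=\emptyset$ and $S=\{3,\ldots,n\}$) is the easiest way for this argument to go wrong, so I would handle that case first and with extra care before combining totals.
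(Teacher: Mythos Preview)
Your argument is correct and takes a genuinely different route from the paper's own proof. The paper proceeds by induction on $n$: it obtains every element of $\textnormal{flat}_2(\mathcal{PF}_{n+1}(\{2\}))$ by inserting the new largest value $n+1$ either at the end of one of the two runs of an element of $\textnormal{flat}_2(\mathcal{PF}_n(\{2\}))$ (two choices per word), or into a non-terminal position of the unique element of $\textnormal{flat}_1(\mathcal{PF}_n(\{2\}))$ ($n$ choices), and then checks algebraically that doubling the sum and adding $n$ advances the index correctly. Your proof instead performs a direct count: encode a two-run word by the ordered pair of multisets $(A,B)$, split on how many copies of $2$ lie in $A$, and sum to obtain the closed form $3\cdot 2^{n-2}-n-1$, which you then match to $\sum_{i=0}^{n-3}2^{i}(n-1-i)$.

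What each approach buys: the paper's inductive insertion fits the recursive theme used elsewhere in the paper and avoids any algebraic identity beyond a one-line reindexing; your direct count is shorter, yields the closed form $3\cdot 2^{n-2}-n-1$ as a byproduct, and makes transparent exactly which configurations are being excluded (the initial-segment condition in case~(c)). Either route completes the lemma; your case~(c) bookkeeping is the one delicate spot and you have it right: the excluded $S$'s are the $n-2$ proper initial segments of $\{3,\ldots,n\}$ together with the full set, giving $2^{n-2}-(n-1)$ valid choices there.
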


\begin{proof}
We prove this by induction on $ m =n-2$.
If $ m = 1 $, then
\begin{equation*}
\sum_{i=0}^{m-1} 2^i (m+1-i) = 2.
\end{equation*}
We note that the set $\textnormal{flat} (\mathcal{PF}_3(\{ 2\})) = \{ 1223,1232, 1322\}$ is formed by inserting a 2 into a permutation of $\mathfrak{S}_3$, and then only selecting the flattened parking functions. We are only interested in the subset $\textnormal{flat}_2 (\mathcal{PF}_3(\{ 2\}))$, with two runs. In this case, there are two such elements: $1232$ and $1322$.  So indeed, $ \vert\textnormal{flat} _2(\mathcal{PF}_3(\{ 2\}))\vert = 2 $.

Suppose that $ \sum_{i=0}^{m-1} 2^i (m+1-i) = \vert \textnormal{flat}_2(\mathcal{PF}_n(\{ 2\})) \vert $, where $n=m+2$.
We now establish that the result hold for $m+1$. Note that we can get some of the elements in $ \textnormal{flat}_2(\mathcal{PF}_{n+1}(\{ 2\})) $ by inserting $ n + 1 $ into the elements in $ \textnormal{flat}_2(\mathcal{PF}_{n}(\{ 2\})) $. 
Since we have two runs, we can insert $n+1$ to the end of the first run or to the end of the second run. By induction, this gives $2\sum_{i=0}^{m-1} 2^i (m+1-i)$ parking functions. 
We can also get elements of $\vert \textnormal{flat}_{n+1}(\mathcal{PF}_n\left(\{ 2\}\right))\vert$ by inserting $n+1$ into an element of  $\textnormal{flat}_(\mathcal{PF}_n(\{ 2\}))$ with a single run, which increases the number of runs by one. 
Note that we only have one element with a single run, namely, when all elements are in weakly increasing order. 
In this case, we can insert $n+1$ into any spot other than the end in order to increase the number of runs, as a descent always occurs after the inserted $ n + 1 $. 
Note that we have $n = m + 2$ such spots in which to insert the value $n+1$, hence we have constructed $m+2 + 2\sum_{i=0}^{m-1} 2^i (m+1-i)$ elements in $ \textnormal{flat}_2(\mathcal{PF}_{n+1}(\{ 2\})) $. 
Now note that
\begin{align*}
m+2+2\sum_{i=0}^{m-1} 2^{i} (m+1-i) & = m+2+ \sum_{i=0}^{m-1} 2^{i+1} (m+2-(i+1)) \\
& = m + 2 + \sum_{i=1}^{m} 2^{i} (m+2-i) \\
& = \sum_{i=0}^{m} 2^{i} (m+2-i).
\end{align*}
Thus, $|\textnormal{flat}_2(\mathcal{PF}_{n}(\{ 2\}))|= \sum_{i=0}^{m-1} 2^{i} (m+1-i) $.
\end{proof}

We can now proceed to our main object of study,  $\mathcal{S}$-insertion flattened parking functions in the special case of $\mathcal{S}$ being the multiset consisting of all ones.

\section{\texorpdfstring{$ \mathbf{1}_{r} $}{Lg}-insertion flattened parking functions}\label{Sec:OnlyOneOne}
In this section, we let $ \mathcal{S} \coloneqq \mathbf{1}_{r} =\{1,\ldots,1\} $ be the multiset containing $r\geq 1$ ones. We then study families of $ \textnormal{flat} (\mathcal{PF}_{n} (\mathbf{1}_{r})) $ for $r\geq 1$.
To highlight how our results generalize the work of Nabawanda, Rakotondrajao and Bamunoba in \cite{NaRaBa}, we begin with Subsection~\ref{sec:single1} which contains the results in the special case of $r=1$. We then further generalize to $r\geq 2 $ in Subsection~\ref{sec:manyOnes}.

\subsection{The case of \texorpdfstring{$r=1$}{Lg}}\label{sec:single1}
In this section, we study the set $ \textnormal{flat} (\mathcal{PF}_n(\{1\})) $, which is the case of $\textnormal{flat} (\mathcal{PF}_n(\mathbf{1}_{r}))$, with $r=1$. 
Henceforth, we call this family of parking functions $ 1 $-insertion flattened parking functions. 

Our work is motivated by the results of Nabawanda, Rakotondrajao and Bamunoba in~\cite{NaRaBa}. In their work, they study $\mathcal{F}_{n,k}$, which is the set of flattened partitions with $k$ runs. 
They denote the cardinality of this set as $f_{n,k}$.
In what follows we recall their recursions for $f_{n,k}$, and provide our generalizations thereafter.

\subsubsection{Recursions for \texorpdfstring{$ \textnormal{flat} (\mathcal{PF}_n(\{1\}))$}{Lg}}
For sets of  $ \mathcal{S} $-insertion flattened parking functions, we let $ f(\mathcal{S}; n) $ denote the cardinality of $ \textnormal{flat} (\mathcal{PF}_n (\mathcal{S})) $, and 
    $f(\mathcal{S};n,k) $ denote the cardinality of $ \textnormal{flat} _k(\mathcal{PF}_n(\mathcal{S})) $.
We begin by recalling the recurrence for the cardinality of flattened partitions of length $n$ with $k$ runs as given by Nabawanda et al.\ \cite[Thm.\ 1, p.\ 4]{NaRaBa}. 

\begin{theorem}\label{thm:1inNAetal}
For all integers $ n,k $ such that $ 2 \leq k < n $, the numbers $f_{n,k}$ of flattened partitions of length $ n $ with $ k $ runs satisfy the following recurrence relation:
\begin{equation*}
f_{n,k} = \sum_{m=1}^{n-2} \left (\binom{n-1}{m}-1 \right)f_{m,k-1}.
\end{equation*}
\end{theorem}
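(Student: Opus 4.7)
The plan is a combinatorial decomposition of each $\sigma\in\mathcal{F}_{n,k}$ by its first run. The opening observation is that $\sigma_1=1$ for every flattened partition: any value to the left of $1$ would force a descent immediately before $1$, making $1$ a leading value; since $1$ is the smallest possible value and leading values strictly increase, $1$ must be the very first leading value. In particular, since $\sigma_2\geq 2>1$, the first run has length $\ell\geq 2$.

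Write $m=n-\ell$ for the combined length of the remaining $k-1$ runs, let $A\subseteq[n]$ denote the set of values in the first run, and $B=[n]\setminus A$. I would count flattened partitions by first choosing the set $A$ (with $1\in A$ and $|A|=n-m$) and then arranging $B$ into a flattened partition with $k-1$ runs. Once $A$ is chosen, the first run is forced (list $A$ in increasing order); the remaining $m$ positions hold a flattened permutation of $B$ with $k-1$ runs, and because flattenedness depends only on the relative order of values, order-isomorphism with $[m]$ shows the number of such arrangements is $f_{m,k-1}$. The overall constraint that leading values strictly increase across all $k$ runs is automatic here, since the leading value of the second run lies in $B\subseteq\{2,\ldots,n\}$ and therefore exceeds $1$.

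The subtle step is counting the admissible sets $A$. There are $\binom{n-1}{n-m-1}=\binom{n-1}{m}$ subsets $A\subseteq[n]$ with $1\in A$ and $|A|=n-m$, obtained by choosing $A\setminus\{1\}$ from $\{2,\ldots,n\}$. However, not every such $A$ actually produces a first run of length exactly $\ell$: one needs a descent at position $\ell$, i.e., $\max A>\min B$. The unique failure is $A=\{1,2,\ldots,n-m\}$, for which $\max A=n-m<n-m+1=\min B$. For any other choice, $A$ contains some element exceeding $n-m$ while $B$ contains some element at most $n-m$, forcing $\max A>\min B$. This yields exactly $\binom{n-1}{m}-1$ valid choices of $A$.

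Multiplying the two counts and summing over $m$ (from $1$, when $\ell=n-1$, up to $n-2$, when $\ell=2$) yields the claimed identity; values $m<k-1$ contribute zero since $f_{m,k-1}=0$. The main obstacle will be handling the $-1$ correction term cleanly: one must recognize that a naive choose-then-arrange count overcounts by exactly the configurations in which the supposed ``first run'' silently merges with the second, which happens if and only if $A=\{1,2,\ldots,n-m\}$. Everything else is routine bookkeeping.
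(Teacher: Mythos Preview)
Your argument is correct and matches the paper's approach. The paper does not itself prove Theorem~\ref{thm:1inNAetal} (it is recalled from Nabawanda, Rakotondrajao, and Bamunoba), but the proof it gives for the $\{1\}$-insertion analogue, Theorem~\ref{thm:thm1with1one}, proceeds by exactly your decomposition: peel off the first run $1\alpha$, count the $\binom{n-1}{m}-1$ admissible content sets for it (subtracting the unique initial-segment choice that would merge with the next run), and use order-isomorphism to count the remaining $k-1$ runs by $f_{m,k-1}$.
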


The following result constructs elements of $\textnormal{flat} _k(\mathcal{PF}_n(\{1\}))$ using a similar argument as that in the proof of Theorem \ref{thm:1inNAetal}. 

\begin{theorem}\label{thm:thm1with1one}
For all integers $ n $ and $ k $ such that $ 2 \leq k < n $, the numbers $ f(\{1\}; n,k) $ of $1$-insertion flattened parking functions with k runs satisfy the following recurrence relation:
\begin{equation}\label{eq:ofthm1}
f(\{1\}; n,k) = \sum_{m=1}^{n-1} \left(\binom{n}{m} - 1\right) \cdot
f_{m, k-1}.
\end{equation}
\end{theorem}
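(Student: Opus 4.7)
The plan is to decompose $\textnormal{flat}_k(\mathcal{PF}_n(\{1\}))$ by where the two copies of~$1$ sit in $\alpha$, count each piece in terms of Nabawanda et al.'s numbers $f_{m,k-1}$, and then combine the two expressions using Theorem~\ref{thm:1inNAetal} and Pascal's identity.

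First I would establish two structural facts about any $\alpha=\alpha_1\alpha_2\cdots\alpha_{n+1}\in\textnormal{flat}_k(\mathcal{PF}_n(\{1\}))$. (i)~We must have $\alpha_1=1$: if the two $1$s are adjacent at the start this is immediate, otherwise the second $1$ at position $j\geq 3$ is preceded by a value $\geq 2$ and hence heads a new run with leading value $1$, and weak-increase of leading values then forces $\alpha_1\leq 1$. (ii)~If $\alpha_2\neq 1$, the second $1$ must head the \emph{second} run: if instead it headed the $r$-th run with $r\geq 3$, then runs $2,\ldots,r-1$ would all have leading values $\geq 2$ (no $1$ is available to them), but a subsequent leading value equal to~$1$ violates weak-increase. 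This gives a clean dichotomy: Case~A has $\alpha=11\beta$ with $\beta$ a permutation of $\{2,\ldots,n\}$, while Case~B has $\alpha=1\,x_1\cdots x_{s-1}\,1\,y_1\cdots y_{n-s}$ with $s\geq 2$ and $x_1<\cdots<x_{s-1}$ in $\{2,\ldots,n\}$.

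For Case~A, the map $\alpha=11\beta\mapsto 1\beta$ (delete one leading $1$) is a bijection onto $\mathcal{F}_{n,k}$: prepending one $1$ to $\beta$ merges into exactly the same first run as prepending two, and the leading values of all later runs are unchanged, so $\alpha$ is $k$-run flattened iff $1\beta\in\mathcal{F}_{n,k}$. Hence Case~A contributes $f_{n,k}$. In Case~B, once $s$ is fixed, the set $\{x_1,\ldots,x_{s-1}\}\subset\{2,\ldots,n\}$ can be chosen in $\binom{n-1}{s-1}$ ways; the suffix $1\,y_1\cdots y_{n-s}$ is then a word of length $n+1-s$ starting with $1$, whose runs are exactly the remaining $k-1$ runs of $\alpha$, so the order-preserving relabelling of its underlying set to $[n+1-s]$ (which fixes~$1$) identifies it with a unique element of $\mathcal{F}_{n+1-s,\,k-1}$. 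Substituting $m=n+1-s$, Case~B contributes
\[
\sum_{m=1}^{n-1}\binom{n-1}{m-1}f_{m,k-1},
\]
where the terms with $m<k-1$ vanish automatically.

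Adding the two cases and invoking Theorem~\ref{thm:1inNAetal} to replace $f_{n,k}=\sum_{m=1}^{n-2}\bigl(\binom{n-1}{m}-1\bigr)f_{m,k-1}$, the coefficient of $f_{m,k-1}$ for $1\leq m\leq n-2$ becomes $\bigl(\binom{n-1}{m}-1\bigr)+\binom{n-1}{m-1}=\binom{n}{m}-1$ by Pascal's identity, while the boundary term at $m=n-1$ contributes $\binom{n-1}{n-2}=n-1=\binom{n}{n-1}-1$, yielding precisely the target sum~\eqref{eq:ofthm1}. The main subtlety lies in the structural argument~(ii) pinning down that the second $1$ can only head the second run (not a later one); once that is in hand the enumeration is essentially bookkeeping.
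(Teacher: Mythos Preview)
Your proof is correct, but it takes a genuinely different route from the paper's.  The paper argues directly: writing $\gamma=1\alpha\beta$ with $1\alpha$ the entire first run, it observes that $\alpha$ is an $m$-subset of $[n]$ listed in increasing order, with $\binom{n}{m}-1$ choices (the subtraction removes the unique subset $\{1,2,\ldots,m\}$, which would force the run to continue into $\beta$), and that $\beta$ is order-isomorphic to an element of $\mathcal{F}_{n-m,\,k-1}$.  Reindexing gives~\eqref{eq:ofthm1} immediately, with no appeal to Theorem~\ref{thm:1inNAetal}.

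Your decomposition into Cases~A and~B according to the location of the second~$1$ is instead exactly the two-case split the paper uses later to prove Theorem~\ref{thm:Ones_gen_func_recurs}; the intermediate identity you obtain,
\[
f(\{1\};n,k)=f_{n,k}+\sum_{m=1}^{n-1}\binom{n-1}{m-1}f_{m,k-1},
\]
is that theorem after a change of index.  You then invoke Theorem~\ref{thm:1inNAetal} to expand $f_{n,k}$ and combine via Pascal.  So effectively you prove Theorem~\ref{thm:Ones_gen_func_recurs} first and derive Theorem~\ref{thm:thm1with1one} from it together with Theorem~\ref{thm:1inNAetal}, whereas the paper's argument is self-contained.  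The paper's approach is shorter; yours has the merit of exposing the algebraic relationship linking the three recursions.
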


\begin{proof}
We begin constructing our  $1$-insertion flattened parking function  $ \gamma $ by building the first run. 
Recall that every flattened parking function starts with a one and let $\gamma=1\alpha\beta$ where $1\alpha$ denotes the first run and $\beta$ contains the remaining $k-1$ runs of $\gamma$. Note that $\alpha\beta$ is a permutation of length $n$, and since $k\geq 2$, we know that $\beta$ cannot be empty.

Assume that $ \alpha $ is of length $ m $, where $m$ satisfies $ 1 \leq m \leq n-1$. This ensures $\beta$ is not empty. 
For a given $ m $, there are $\binom{n}{m} -1 $ options for the elements in the first run, namely the entries in $\alpha$. 
Note that the ``-1'' in our count comes from the fact that if $\alpha=123\cdots m$, then $1\alpha$ would not be the entirety of the first run as the number beginning $\beta$ would be larger than $m$ and hence would be a part of the run $1\alpha$, contradicting the assumption that $\alpha$ has length $m$. 
Moreover, all other choices for the entries in $\alpha$ result in a unique first run where those elements appear in increasing order. 
Thus, there are $\binom{n}{m}-1$ options for constructing the first run $1\alpha$.

Given the same value of $ m $ as before, we then construct the remaining $ k - 1 $ runs of $\gamma$, which are the entries in $\beta$. 
We know $\beta$ must be flattened for the sequence $1 \alpha \beta$ to be flattened, must contain $k-1$ runs, and has length $n-m$. 
We claim that there are $f_{n-m, k-1}$ choices for $\beta$. 
To see this, note that for each flattened partition with $k-1$ runs, we simply replace the set  $\{1,2,3,\ldots, n-m\}$ with the set of $n-m$ numbers making up $\beta$ and put them in the same relative order so as to create $k-1$ runs that are flattened. 
Hence, each flattened partition of length $n-m$ with $k-1$ runs gives a unique possible $\beta$. 

Finally, observe that our first run has minimum length 2 and maximum length $n$, in order to guarantee that we have at least two runs. 
So, this process holds for $1 \leq m \leq n-1$, giving us our final summation of $f(\{1\}; n, k) = \sum_{m=1}^{n-1} \left(\binom{n}{m} -1\right) f_{n-m,k-1}$.
To conclude, we note that
\begin{align*}
    f(\{1\}; n, k) &= \sum_{m=1}^{n-1} \left(\binom{n}{m} -1\right) f_{n-m,k-1}\\
    &= \sum_{m=1}^{n-1} \left(\binom{n}{n-m} -1\right) f_{n-m,k-1}\\
    &= \sum_{m=1}^{n-1} \left(\binom{n}{m} -1\right) f_{m,k-1},
\end{align*}
where the last equality follows from reindexing the sum. 
\end{proof}

Nabawanda et al.\ \cite[Thm.\ 10, p.\ 6]{NaRaBa}, constructed the following two term recursion.

\begin{theorem}\label{thm:10inNAetal}
For all integers $ n,k $ such that $ 1 \leq k < n $, the number of flattened partitions of length $ n $ with $ k $ runs satisfies the following recursion:
\begin{equation*}
f_{n,k} = kf_{n-1, k}+(n-2)f_{n-2, k-1}.
\end{equation*}
\end{theorem}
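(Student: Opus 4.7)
My plan is to prove the identity by a double bijection, partitioning $\mathcal{F}_{n,k}$ according to whether removing the maximum entry $n$ preserves or decreases the number of runs. Because $n$ is the largest element, it must end some run $R_i=r_1 r_2\cdots r_p\,n$ of $F\in\mathcal{F}_{n,k}$. If $i=k$, or if $i<k$ and $r_p>\ell_{i+1}$, then deleting $n$ leaves a flattened partition of $[n-1]$ with $k$ runs (Case A); otherwise $i<k$ and $r_p<\ell_{i+1}$, so the runs $R_i$ and $R_{i+1}$ merge into a single ascending block after deleting $n$, and the result lies in $\mathcal{F}_{n-1,k-1}$ (Case B).

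For Case A, I would establish a bijection $\mathcal{F}_{n-1,k}\times[k]\longrightarrow\{F\in\mathcal{F}_{n,k}:\text{Case A}\}$ defined by appending $n$ to the end of the $j$-th run of $F'\in\mathcal{F}_{n-1,k}$. Because $n$ is maximal, this operation never alters a leading value and never splits or merges runs, so the result is automatically in $\mathcal{F}_{n,k}$ and is of Case A; the inverse simply reads off which run of the resulting $F$ contains $n$. Hence Case A contributes $k\,f_{n-1,k}$.

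For Case B, I would construct a bijection $\mathcal{F}_{n-2,k-1}\times\{2,3,\ldots,n-1\}\longrightarrow\{F\in\mathcal{F}_{n,k}:\text{Case B}\}$. Forward map: set $v:=\ell_{i+1}$, noting $v\in\{2,\ldots,n-1\}$ since $v>\ell_1=1$ and $v\neq n$, and let $F''$ be the standardization of $F$ with both $n$ and $v$ removed; the inequality $r_p<\ell_{i+1}$ forces the residual elements of $R_i\setminus\{n\}$ and $R_{i+1}\setminus\{v\}$ to form a single ascending run of $F''$, so $F''\in\mathcal{F}_{n-2,k-1}$. Inverse map: given $(F'',v)$, first unstandardize by shifting every entry $\ge v$ up by $1$ to obtain $F^\circ$ on the alphabet $[n-1]\setminus\{v\}$; then identify the unique run $\tilde{R}_j$ of $F^\circ$ whose leading value is the largest one less than $v$ (using $+\infty$ past the last run); finally insert $n$ into $\tilde{R}_j$ right after all of its elements less than $v$, and insert $v$ immediately after $n$.

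The hardest step will be verifying that this inverse always produces a flattened partition with exactly $k$ runs. The only potentially problematic configuration is that $v$ exceeds every element of $\tilde{R}_j$ while $j<k-1$, which would leave the singleton $\{v\}$ alone and cause it to merge with $\tilde{R}_{j+1}$. I would rule this out by combining two facts: the flattened descent of $F^\circ$ between $\tilde{R}_j$ and $\tilde{R}_{j+1}$ forces the last entry of $\tilde{R}_j$ to exceed $\ell_{\tilde{R}_{j+1}}$, and the choice of $j$ forces $v<\ell_{\tilde{R}_{j+1}}$; together these give $v<\ell_{\tilde{R}_{j+1}}<(\text{last entry of }\tilde{R}_j)$, contradicting the bad scenario. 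Once this is resolved, Case B contributes $(n-2)\,f_{n-2,k-1}$, and summing with Case A yields the desired recurrence.
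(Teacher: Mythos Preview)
Your proof is correct. The paper does not give its own proof of this theorem---it is quoted from Nabawanda, Rakotondrajao, and Bamunoba---but the paper \emph{does} prove the $\{1\}$-insertion analogue (Theorem~\ref{thm:1insPFRec}) via Propositions~\ref{prop:2n1} and~\ref{prop:1n2}, and your argument matches that approach exactly: your Case~A is the ``$n$ at the end of a run without merging'' map of Proposition~\ref{prop:2n1}, and your Case~B is the ``remove the subword $(n)\ell_{i+1}$ and standardize'' map of Proposition~\ref{prop:1n2} (credited there to Beyene and Mantaci). In particular, your inverse in Case~B---unstandardize and insert $nv$ right after the last entry smaller than $v$ in the appropriate run---is precisely the paper's ``insert $(n+1)(i+1)$ after the rightmost element of $\{1,\dots,i\}$,'' since all runs to the right of $\tilde R_j$ have every entry larger than $v$. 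Your handling of the one delicate point (ruling out $q=m$ when $j<k-1$ via $a_m>\tilde\ell_{j+1}>v$) is sound and is exactly the consistency check the paper's construction implicitly relies on.
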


The following is a generalization of Theorem \ref{thm:10inNAetal} from flattened partitions to the set  $\textnormal{flat} _k(\mathcal{PF}_n(\{1\}))$. 
This result allows us to consider the patterns present in a parking function that contain the largest letter $n+1$.

\begin{theorem}\label{thm:1insPFRec}
For all integers $ n,k $ such that $ 1 \leq k < n $, the number of  $1$-insertion flattened parking functions with $k$ runs satisfies the recurrence relation
\begin{equation*}
f(\{1\};n+1,k) = k \cdot f(\{1\};n,k) + (n-1) \cdot f(\{1\};n-1,k-1) + f_{n,k-1}.
\end{equation*}
\end{theorem}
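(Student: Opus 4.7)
The plan is to classify each $\gamma\in\textnormal{flat}_k(\mathcal{PF}_{n+1}(\{1\}))$ by what happens when the maximum entry $n+1$ is deleted from it. Since $n+1$ is the largest entry and (assuming $k\geq 2$; the case $k=1$ reduces to an immediate bijection since Case~2 below is vacuous) cannot begin a run, $n+1$ must sit at the end of some run $R_j$ of length at least two; let $x$ denote the element immediately preceding $n+1$ within $R_j$. Deletion of $n+1$ yields a flattened word of length $n+1$ whose run count is $k$ when either $j=k$ or $x>L_{j+1}$, and is $k-1$ (due to $R_j\setminus\{n+1\}$ merging with $R_{j+1}$) otherwise.

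In the non-merging case, the resulting $\gamma'=\gamma\setminus\{n+1\}$ lies in $\textnormal{flat}_k(\mathcal{PF}_n(\{1\}))$, and conversely each such $\gamma'$ can be lifted to a $\gamma$ of this form by appending $n+1$ to the end of any one of its $k$ runs; this bijection with $[k]\times\textnormal{flat}_k(\mathcal{PF}_n(\{1\}))$ yields the term $k\cdot f(\{1\};n,k)$.

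For the merging case I subdivide by $L_{j+1}$. If $L_{j+1}=1$, then weak monotonicity of leading values forces $j=1$ and $L_2=1$, and the inequality $x\leq 1$ together with $L_1=1\leq x$ forces $R_1=(1,n+1)$; the prefix $1,(n+1)$ is then followed by $R_2R_3\cdots R_k$, which uses all of $[n]$ and has weakly increasing leading values starting at $L_2=1$. This is precisely a flattened partition of $[n]$ with $k-1$ runs, contributing $f_{n,k-1}$. If instead $i:=L_{j+1}\geq 2$, I construct a bijection with $\{2,\ldots,n\}\times\textnormal{flat}_{k-1}(\mathcal{PF}_{n-1}(\{1\}))$: the forward map sends $\gamma$ to $(i,\delta)$, where $\delta$ is obtained by deleting both $n+1$ and $i$ from $\gamma$ and then decrementing every remaining entry greater than $i$. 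The inverse first increments every entry of $\delta$ that is $\geq i$ (yielding a word $\delta'$ whose entries form $\{1,1,2,\ldots,n\}\setminus\{i\}$); then locates the unique run of $\delta'$ whose leading value is less than $i$ but whose successor leading value (if one exists) exceeds $i$; splits that run at the unique point where its entries cross from $<i$ to $>i$; and finally appends $n+1$ to the left piece and prepends $i$ to the right piece. Since $i$ ranges over $\{2,\ldots,n\}$ and $\delta$ ranges over $\textnormal{flat}_{k-1}(\mathcal{PF}_{n-1}(\{1\}))$, this accounts for $(n-1)\cdot f(\{1\};n-1,k-1)$ elements, and summing the three contributions gives the claimed recurrence.

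The main obstacle is this last bijection. I expect the key technical verifications to be: (i)~removing $i$ from the merged run of $\gamma\setminus\{n+1\}$ preserves weak ascendingness, using the merging inequality $x\leq i$ together with the fact that $i$ was weakly below the entry that followed it in that merged run; (ii)~decrementing all entries greater than $i$ preserves every weak and strict comparison between remaining entries, hence the run decomposition and the flattened property; and (iii)~the split point prescribed by the inverse is genuinely unique, which follows because $i$ is absent from $\delta'$, so each run's entries cleanly partition into those $<i$ and those $>i$, while the Sub-case~2b merging condition pins the break exactly at the transition.
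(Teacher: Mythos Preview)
Your proposal is correct and follows essentially the same approach as the paper. The paper partitions $\textnormal{flat}_k(\mathcal{PF}_{n+1}(\{1\}))$ into four cases according to whether $n+1$ sits at the end, or appears in a subword $a(n+1)b$ with $a=b$, $a<b$, or $a>b$; your non-merging case is exactly their cases~1 and~4 combined, your merging sub-case $L_{j+1}=1$ is their $1(n+1)1$ case, and your merging sub-case $L_{j+1}\ge 2$ is their $a<b$ case, with your bijection (delete $n+1$ and $i$, then decrement entries above $i$) being the inverse of their construction (increment entries above $i$, then insert $(n+1)(i+1)$ after the rightmost element of $\{1,\dots,i\}$), up to the reindexing $i\leftrightarrow i+1$.

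One small wording issue: in your inverse for Sub-case~2b you speak of splitting the designated run ``at the unique point where its entries cross from $<i$ to $>i$''. When that run is the last run of $\delta'$ it may consist entirely of entries $<i$ (no crossing); the correct prescription, matching the paper, is simply to insert $(n+1)\,i$ immediately after the rightmost entry $<i$ in $\delta'$, which handles this edge case uniformly.
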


Before we proceed with the proof of Theorem \ref{thm:1insPFRec} we give some definitions and an initial result which provides a set partition of $\textnormal{flat} _k(\mathcal{PF}_{n+1}(\{1\}))$. We follow that with an example to illustrate the proof of Theorem~\ref{thm:1insPFRec}.
Recall that a word $\alpha$ is said to contain a subword $w$ if there exists consecutive entries in $\alpha$ whose values are precisely the values in $w$ in the same order. For example, $\alpha=12315647$ contains the subword $315$ but not the subword $213$.
We remark that Nabawanda et al.\ \cite{NaRaBa} utilized the idea of patterns to describe these subwords. For simplicity we omit this technicality as subwords are enough to present our results. 

\begin{proposition}\label{prop:disjointunion}
The set $\textnormal{flat} _k(\mathcal{PF}_{n+1}(\{1\}))$ can be partitioned (as a set) into the following four subsets:
\begin{enumerate}
    \item elements ending with $ n+1 $,
    \item elements beginning with the subword $1(n+1)1$,
    \item elements containing the subword $a(n+1)b$ with $a<b$,
    \item elements containing the subword $a(n+1)b$ with $a>b$.
\end{enumerate}
\end{proposition}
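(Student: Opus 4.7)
My plan is to partition $\textnormal{flat}_k(\mathcal{PF}_{n+1}(\{1\}))$ according to the position of the unique entry $n+1$ and the values immediately adjacent to it. The approach rests on two preliminary observations that I would establish first: (i) each $\alpha\in\textnormal{flat}_k(\mathcal{PF}_{n+1}(\{1\}))$ contains exactly one copy of $n+1$, since it is obtained from a permutation of $[n+1]$ by inserting a single additional $1$, so the only value that is repeated in $\alpha$ is $1$ (appearing twice); and (ii) every flattened parking function satisfies $\alpha_1=1$, because $1$ must occur in any parking function, its leftmost occurrence must be the leading value of some run (either it starts the word, or it is preceded by something strictly larger than $1$ and hence begins a new run), and the weakly increasing condition on leading values then forces the first entry to equal $1$.

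With these facts in hand, I would case-split on the location of $n+1$. Since $\alpha_1=1$, the value $n+1$ cannot sit at position $1$. If it sits at the last position, we place $\alpha$ in case (1). Otherwise $n+1$ is at an interior position with both a left neighbor $a$ and a right neighbor $b$; by uniqueness of $n+1$ we have $a,b<n+1$, so the unique subword of $\alpha$ that contains $n+1$ is exactly $a(n+1)b$. A small comparison then handles the remainder: if $a<b$ we land in case (3), if $a>b$ we land in case (4), and the case $a=b$ is the one that requires the most care.

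The main obstacle I anticipate is showing that $a=b$ forces $\alpha$ to begin with the subword $1(n+1)1$, which is exactly case (2). By observation (i), the only value repeated in $\alpha$ is $1$, so $a=b$ forces $a=b=1$. But $\alpha$ contains only two $1$'s in total, and both are now pinned as the immediate neighbors of $n+1$. The opening $1$ guaranteed by observation (ii) must therefore coincide with the left copy, which places $n+1$ at position $2$ and yields the prefix $1(n+1)1$, as claimed.

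To conclude I would verify disjointness, which is immediate once the cases are set up: the four subsets are distinguished by mutually exclusive features of the unique occurrence of $n+1$, namely being at the final position, being flanked on both sides by $1$ at the very start of the word, having a strictly smaller left neighbor, or having a strictly larger left neighbor. Since the exhaustive analysis above assigns each element to exactly one of the four cases, the sets partition $\textnormal{flat}_k(\mathcal{PF}_{n+1}(\{1\}))$ as claimed.
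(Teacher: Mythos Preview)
Your argument is correct and follows essentially the same route as the paper's own proof: both case-split on the position of the unique occurrence of $n+1$, rule out the first position via flattenedness, and then trichotomize on the relation between the two neighbors $a$ and $b$, using that the only repeated value is $1$ to reduce $a=b$ to the prefix $1(n+1)1$. Your write-up is in fact slightly more explicit than the paper's in justifying why $\alpha_1=1$ and why $a=b=1$ forces that subword to occur at the very start, and in checking disjointness.
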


\begin{proof}
Let $\alpha\in \textnormal{flat} _k(\mathcal{PF}_{n+1}(\{1\}))$ and note that $\alpha$ has exactly one instance of the element $n+1$. 
It suffices to consider where the value $n+1$ appears in $\alpha$. 
First note that since $\alpha$ is flattened, then $n+1$ cannot appear at the start of $\alpha$. 
Thus, $\alpha$ contains $n+1$ either at the end or between two other numbers.
If $n+1$ appears at the end, then $\alpha$ is one of the elements of the subset of case 1. 
If $n+1$ appears between two other numbers, then $\alpha$ contains the subword $a(n+1)b$. In which case the following can occur: 
$a=b$, or $a<b$, or $a>b$. If $a=b$, then since $\alpha\in \textnormal{flat} _k(\mathcal{PF}_{n+1}(\{1\}))$, the only repeated value is one, and so $a=b=1$, which implies that $\alpha$ begins with the subword $1(n+1)1$. Hence, $\alpha$ is an element described in case 2.
If $a<b$ or $a>b$, then $\alpha$ is an element described in case 3 or case 4, respectively.
\end{proof}

Below we provide an example to illustrate the set partition in Proposition \ref{prop:disjointunion} and the enumeration in Theorem \ref{thm:1insPFRec}.  
\begin{example}
Let $n+1=4$ and $k=3$. By Proposition \ref{prop:disjointunion}
we note that 
\begin{enumerate}
    \item[(1)] there are no elements in $\textnormal{flat} _3(\mathcal{PF}_{4}(\{1\}))$ ending with $4$, 
    \item[(2)] there is one element in $\textnormal{flat} _3(\mathcal{PF}_{4}(\{1\}))$ beginning with the subword $141$: $14132$,
    \item[(3)] there are two elements in $\textnormal{flat} _3(\mathcal{PF}_{4}(\{1\}))$ containing the subword $a4b$ with $a<b$ are: $13142$ and $12143$    \item[(4)] there are no elements in $\textnormal{flat} _3(\mathcal{PF}_{4}(\{1\}))$ containing the subword $a4b$ with $a>b$. 
\end{enumerate}
Note that the number of elements from cases (1) and (4) is zero which is exactly the number $f(\{1\};3,3) $ as there are no  $1$-insertion flattened parking functions of length four with three runs. 
The number of elements in case (2) is the number $f_{3,2}$, which is a permutation of length three with two runs, namely the permutation $132$.
Lastly, the number of elements in case (3) is the number $2\cdot f(\{1\};2,2)$, since $f(\{1\};2,2)=1$, as the only  $1$-insertion flattened parking function of length three with two runs is $121$. 
This shows that 
Theorem \ref{thm:1insPFRec}
\[f(\{1\}; 4,3) = f(\{1\};3,3) + 2 \cdot f(\{1\};2,2) + f_{3,2}=0+2\cdot 1+1=3.\]
\end{example}

The following proposition utilizes techniques of Nabawanda et al.\ \cite[Thm.\ 9, p.\ 6]{NaRaBa}.

\begin{proposition}\label{prop:2n1}
The subset of $\textnormal{flat} _k(\mathcal{PF}_{n+1}(\{1\}))$ containing elements with the subword $a(n+1)b$ where $a>b$ or $n+1$ at the end of the parking function is enumerated by 
\begin{equation*}
k \cdot f(\{1\};n,k)
\end{equation*}
\end{proposition}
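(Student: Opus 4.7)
The plan is to establish a bijection between the described subset of $\textnormal{flat}_k(\mathcal{PF}_{n+1}(\{1\}))$ and the Cartesian product $[k] \times \textnormal{flat}_k(\mathcal{PF}_n(\{1\}))$, whose cardinality is manifestly $k \cdot f(\{1\};n,k)$. The key observation is that in both the subword case ($a(n+1)b$ with $a > b$) and the terminal case ($\alpha$ ending in $n+1$), the element $n+1$ sits at the \emph{end} of some run of $\alpha$. So the structure I want to exploit is: an element of the described subset is uniquely recovered from a $\{1\}$-insertion flattened parking function of length $n+1$ together with a choice of which of its $k$ runs to augment with an $n+1$ on the right.

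In the forward direction, given $\alpha$ in the described subset, I would locate the unique occurrence of $n+1$ and the run $R_j$ containing it, record $j \in [k]$, and delete $n+1$ to produce $\beta$. I need to verify three things about $\beta$: (i) it lies in $\mathcal{PF}_n(\{1\})$ (the multiset of entries of $\alpha$ is $\{1,1,2,\ldots,n+1\}$, so removing $n+1$ yields $\{1,1,2,\ldots,n\}$, a permutation of $[n]$ with an extra $1$, and it remains a parking function since the weakly increasing rearrangement still satisfies the standard inequality); (ii) $\beta$ has exactly $k$ runs (since $\alpha$ has weak ascent $a \leq n+1$ immediately before $n+1$, the run $R_j$ in $\alpha$ has length at least two, so deleting $n+1$ leaves a nonempty run ending in $a$, and the strict descent $a > b$ in case~4 or the terminal position in case~1 keeps this as a genuine run boundary); and (iii) $\beta$ is flattened (the leading values of the runs of $\beta$ are identical to those of $\alpha$, since we only removed a trailing entry from a run, not a leading one).

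For the inverse, given $(\beta,j) \in \textnormal{flat}_k(\mathcal{PF}_n(\{1\})) \times [k]$, I insert $n+1$ immediately after the last entry of the $j$-th run of $\beta$. Because $n+1$ is maximal, the weak ascent at the insertion point is preserved (the previous entry is at most $n+1$), and $n+1$ is strictly greater than the first entry of the $(j+1)$-th run (when $j<k$) or occupies the final position (when $j=k$); either way the run count stays at $k$, the leading values of all runs are unchanged, and the resulting word lies in $\mathcal{PF}_{n+1}(\{1\})$. When $j=k$ the output ends in $n+1$ (case~1), and when $j<k$ the output contains the subword $a(n+1)b$ with $a$ the last entry of the $j$-th run of $\beta$ and $b$ the first of the $(j+1)$-th, so $a>b$ (case~4). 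The two maps are visibly mutually inverse, so the bijection is established.

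The only delicate step is verifying that deletion does not collapse the run count, i.e., that the run containing $n+1$ is never a singleton. This follows immediately from the weak-ascent convention: any entry immediately preceding $n+1$ must be in the same run as $n+1$, so the run has length at least two whenever $n+1$ is not the first entry of $\alpha$, which is automatic since $\alpha$ begins with $1$. Once this point is settled, all remaining verifications reduce to the observations that $n+1$ is the maximum entry and that insertions/deletions at run ends do not perturb leading values.
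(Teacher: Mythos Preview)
Your proof is correct and follows essentially the same approach as the paper: both arguments observe that the relevant elements are obtained precisely by inserting $n+1$ at the end of one of the $k$ runs of an element of $\textnormal{flat}_k(\mathcal{PF}_n(\{1\}))$. Your version is considerably more detailed---you explicitly construct both directions of the bijection and verify that deletion does not collapse a run---whereas the paper simply describes the insertion and notes there are $k$ choices.
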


\begin{proof}
We start with $ 1 $-insertion parking functions of length $ n $ which have cardinality \[ f (\{1\};n,k) .\] To get the subword $a(n+1)b$ where $a>b$, we must place $n+1$ where it will not increase the number of runs. Thus, $ n+1 $ can be placed at the end of each run, including at the very end of the parking function, giving us $ k $ options for placement, and accounting for cases 1 and 4.
\end{proof}

The following proposition utilizes techniques of Beyene and Mantaci \cite[Lemma 25 and Proposition 26]{FBRM}.

\begin{proposition}\label{prop:1n2}
The subset of $\textnormal{flat} _k(\mathcal{PF}_{n+1}(\{1\}))$ containing elements with the subword $a(n+1)b$ with $a< b$ is enumerated by 
\begin{equation*}
(n-1) \cdot f(\{1\};n-1,k-1).
\end{equation*}
\end{proposition}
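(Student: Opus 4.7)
The plan is to partition the subset under consideration according to whether $b=n$ or $b<n$, obtaining contributions of $f(\{1\};n-1,k-1)$ and $(n-2)f(\{1\};n-1,k-1)$ respectively, summing to the claimed $(n-1)f(\{1\};n-1,k-1)$.

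In the first partition class, where $b=n$, I would argue that each such $\beta$ must have the form $\beta=\alpha\cdot(n+1)\cdot n$ for a unique $\alpha\in\textnormal{flat}_{k-1}(\mathcal{PF}_{n-1}(\{1\}))$. The key observation is that $n$ is preceded by the descent from $n+1$, so $n$ begins a new run, and the only value strictly larger than $n$ that could start a later run is $n+1$, which is already used; flatness therefore forces $n$ to occupy the last position of $\beta$ alone in its run. Removing the trailing $(n+1)n$ yields $\alpha$ of length $n$ with $k-1$ runs, flat, over the multiset $\{1,1,2,\ldots,n-1\}$; appending this suffix back is clearly the inverse, so this class contributes exactly $f(\{1\};n-1,k-1)$ elements.

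In the second partition class, where $b\in\{2,3,\ldots,n-1\}$ (giving $n-2$ possible values), my plan is, for each fixed $b$, to construct a bijection with $\textnormal{flat}_{k-1}(\mathcal{PF}_{n-1}(\{1\}))$. Given $\beta$ with subword $a(n+1)b$ and $a<b$, I first remove $n+1$; because $a<b$, the two adjacent runs merge, yielding an intermediate flat parking function $\gamma$ of length $n+1$ with $k-1$ runs in which the value $n$, being maximal, necessarily ends some run. I then produce $\alpha$ by extracting $n$ from $\gamma$, possibly first relocating it via a swap with $b$ or with an appropriate neighbor so that flatness and the run count are preserved; the value $b$ is retained as auxiliary data. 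The inverse takes $(\alpha,b)$ and, using the position of $b$ in $\alpha$, reinserts $n+1$ and $n$ so that the descent $n+1>b$ produces the required subword. Summing over the $n-2$ values of $b$ then gives $(n-2)f(\{1\};n-1,k-1)$ elements.

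The main obstacle lies in the second class: verifying that the extraction--reinsertion procedure is well-defined and bijective requires a careful case analysis according to whether the value $n$ in $\beta$ sits in a run to the left or to the right of the subword $a(n+1)b$, and whether $n$ is isolated in its own run. This is precisely where the techniques of Beyene and Mantaci \cite{FBRM} enter, providing the combinatorial framework for tracking the positions of $n$ and $b$ through the merge and split operations needed to convert between $\beta$ and $(\alpha,b)$.
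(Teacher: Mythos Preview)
Your overall decomposition---partitioning by the value of $b$---is exactly the paper's parametrization by $i=b-1\in\{1,\ldots,n-1\}$, and your treatment of the case $b=n$ is correct: it coincides with the paper's bijection at $i=n-1$, where the renormalization step is vacuous because no letters exceed $n$ after $(n+1)n$ is deleted.

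The gap is in your case $b<n$.  Your plan is to delete $n+1$ first, obtaining $\gamma\in\textnormal{flat}_{k-1}(\mathcal{PF}_n(\{1\}))$, and then to delete $n$ from $\gamma$ while preserving flatness and the run count via an unspecified ``relocation'' or ``swap''.  This cannot be made into a bijection as stated.  First, for a fixed $b$ the words $\gamma$ that actually arise are only those in which $b$ is \emph{not} a run-leader, so you are not mapping into all of $\textnormal{flat}_{k-1}(\mathcal{PF}_n(\{1\}))$.  Second, and more seriously, the position of $n$ in $\gamma$ bears no relation to $b$; hence your proposed inverse, which ``uses the position of $b$ in $\alpha$'' to reinsert both $n+1$ and $n$, has no way to determine where $n$ should go.  The phrase ``possibly first relocating it via a swap with $b$ or with an appropriate neighbor'' is not a definition of a map.

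The idea you are missing---and it is the content of the Beyene--Mantaci technique the paper invokes---is to delete the \emph{subword} $(n+1)b$ as a unit (not $n+1$ and $n$ separately) and then subtract $1$ from every remaining letter greater than $b$.  This works uniformly for every $b\in\{2,\ldots,n\}$: removing $(n+1)b$ kills exactly one descent, and flatness forces every letter to the right of $b$ in $\beta$ to exceed $b$, so the deleted subword sat immediately after the rightmost letter of $\{1,\ldots,b-1\}$, making the insertion point uniquely recoverable.  After the shift one lands in $\textnormal{flat}_{k-1}(\mathcal{PF}_{n-1}(\{1\}))$, giving the clean bijection for each of the $n-1$ values of $b$.
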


\begin{proof}
This proof is constructive. We begin by letting $ \pi $ be a parking function such that 
$ \pi \in
\textnormal{flat} _{k-1}(\mathcal{PF}_{n-1}(\{1\}))$. 
From $\pi$, we construct $\sigma \in \textnormal{flat} _k(\mathcal{PF}_{n+1}(\{1\}))$ such that $\sigma$ contains the subword $a(n+1)b$ with $1\leq a<b\leq n-1$. 

To construct $\sigma$ from $\pi$, we do the following procedure:

\begin{enumerate}
\item Fix a value $1\leq i \leq n-1$. 
\item Construct $\pi'$ from $\pi$ by taking every element in $\pi$ that is greater than $i$ and adding one to each of those elements.
\item Then, insert the subword $ (n+1)(i+1) $ into $ \pi' $ after the rightmost element of $ \{1, \ldots,i\} $ in $\pi'$. 
\item Call the resulting word $\sigma$.
\end{enumerate}

We claim that $\sigma\in \textnormal{flat} _k(\mathcal{PF}_{n+1}(\{1\}))$.
First note that from step $3$, the length of  $\sigma$ is $n+2$.
Additionally, because $\pi'$ does not contain $i+1$ or $n+1$, we have exactly one instance of every letter $j$, where $2\leq j\leq n+1$. 
Hence, $\sigma$ consists of the letters in the multiset $[n+1]\cup \{1\}$ as desired.
Note that by construction of $\sigma$, wherever we inserted $(n+1)(i+1)$, the value to the left of $(n+1)(i+1)$ would never end a run, and hence, since $n+1>i+1$, this insertion creates an additional run.
Lastly, we are guaranteed that $\sigma$ is flattened, since $\pi\in \textnormal{flat} _{k-1}(\mathcal{PF}_{n-1}(\{1\}))$ and by step $2$ and $3$ we inserted $(n+1)(i+1)$ after all elements smaller than $ i + 1 $ and hence the number of runs to the left of the inserted $(n+1)(i+1)$ remains fixed. Moreover, as all of the elements to the right of the newly inserted $(n+1)(i+1)$ are larger than $i+1$, we know that the number of those runs also remained fixed. Thus, the total number of runs in $\sigma$ is equal to one more than the number of runs in $\pi$, which is $(k-1)+1=k$.
This establishes that $\sigma\in \textnormal{flat} _k(\mathcal{PF}_{n+1}(\{1\}))$.

Furthermore, this construction can be reversed. 
Suppose that $\sigma \in \textnormal{flat} _k(\mathcal{PF}_{n+1}(\{1\}))$ containing the subword $a(n+1)b$ with $a<b$. Since $\sigma$ is flattened, removing the subword $(n+1)b$ from $\sigma$ yields a new word $\sigma'$ of length $n$ that is missing the value $b$, contains $n$, and has $k-1$ runs. 
Take all the letters in $\sigma'$ that are greater than $b$ and subtract one from each letter. 
This gives a word $\pi$ that is in $\textnormal{flat} _{k-1}(\mathcal{PF}_{n-1}(\{1\}))$. 

Given that we fixed the value $ i $ satisfying $1\leq i\leq n-1$, we have $ n -1 $ options for this integer. Hence, for each choice, we have created $ f(\{1\};n-1,k-1) $ options for $ \pi $, yielding a total of $ (n-1) \cdot f(\{1\};n-1,k-1) $ parking functions  in $\textnormal{flat} _k(\mathcal{PF}_{n+1}(\{1\}))$.
\end{proof}

Finally, the following proposition is the piece that is unique to our parking functions.

\begin{proposition}\label{prop:1n1}
The subset of $ \textnormal{flat} _k(\mathcal{PF}_{n+1}(\{1\})) $ containing elements with the subword $1(n+1)1$ is enumerated by
\begin{equation*}
f_{n,k-1}.
\end{equation*}
\end{proposition}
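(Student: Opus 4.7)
The plan is to exhibit an explicit bijection $\Phi$ between the indicated subset and the set of flattened partitions of $[n]$ with $k-1$ runs; the latter has cardinality $f_{n,k-1}$ by definition. By case 2 of Proposition \ref{prop:disjointunion}, every element of $\textnormal{flat}_k(\mathcal{PF}_{n+1}(\{1\}))$ containing $1(n+1)1$ as a subword in fact begins with that subword, so each such $\alpha$ has length $n+2$, uses the multiset $[n+1]\cup\{1\}$, and may be written $\alpha = 1\,(n+1)\,1\,\beta$ where $\beta$ is a permutation of $\{2,3,\ldots,n\}$ (there is exactly one entry of each value in $\{2,\ldots,n\}$ left after removing the prefix). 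I define $\Phi(\alpha) \coloneqq 1\beta$. Since every entry of $\beta$ is strictly less than $n+1$, the first run of $\alpha$ is exactly $1(n+1)$, and the suffix $1\beta$ contributes the remaining $k-1$ runs; because $\alpha$ is flattened, the word $1\beta$ is itself a flattened permutation of $[n]$ with $k-1$ runs, hence lies in the set counted by $f_{n,k-1}$.

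For the inverse $\Psi$, I would first observe that every flattened partition $\gamma$ of $[n]$ begins with $1$: otherwise the entry $1$ sits at a later position and, being smaller than every other entry, would necessarily end some run and become the leading value of a subsequent run, violating the weakly increasing condition on leading values. So each $\gamma$ with $k-1$ runs decomposes as $\gamma = 1\beta'$ with $\beta'$ a permutation of $\{2,\ldots,n\}$, and I set $\Psi(\gamma) \coloneqq 1(n+1)1\beta'$. A direct check confirms that $\Psi(\gamma) \in \mathcal{PF}_{n+1}(\{1\})$ (it is obtained by inserting a $1$ into the permutation $(n+1)\gamma$ of $[n+1]$, so Theorem \ref{thm:insertionAreParkingFxn} applies), has first run $1(n+1)$ followed by the $k-1$ runs of $\gamma$, and has weakly increasing leading values $1,1,\ell_2,\ldots,\ell_{k-1}$ because $\gamma$ is flattened.

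Finally, $\Phi\circ\Psi$ and $\Psi\circ\Phi$ are identities straight from the definitions, so $\Phi$ is a bijection and the count is $f_{n,k-1}$. There is no serious obstacle here; the one point requiring a moment of thought is the structural observation that $\beta$ contains no copy of $1$ or of $n+1$, which forces the clean split between the first run $1(n+1)$ and the flattened suffix $1\beta$. The remainder is routine bookkeeping parallel to the techniques used in Propositions \ref{prop:2n1} and \ref{prop:1n2}.
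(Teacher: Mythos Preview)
Your proof is correct and follows essentially the same bijection as the paper's argument: prepending (or stripping) the prefix $1(n+1)$ to pass between flattened partitions $\sigma\in\mathcal{F}_{n,k-1}$ and elements of $\textnormal{flat}_k(\mathcal{PF}_{n+1}(\{1\}))$ beginning with $1(n+1)1$. The paper's version is considerably terser, but you have filled in exactly the details (why the first run is precisely $1(n+1)$, why $1\beta$ is a flattened permutation of $[n]$ with $k-1$ runs, and why the inverse lands in the right set) that make the bijection rigorous.
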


\begin{proof}
Let $ \sigma \in F_{n,k-1} $. 
In order to get a parking function in $ \textnormal{flat} _k(\mathcal{PF}_{n+1}(\{1\})) $ that contains the subword $1(n+1)1$, we insert the subword $1(n+1)$ to the start of $ \sigma $. 
Since this is the only spot we can insert $1(n+1)$ to contain the desired pattern and remain flat, we have that there are $ f_{n ,k - 1} $ parking functions obtained from this process. 
\end{proof}

We now return to Theorem \ref{thm:1insPFRec}. 

\begin{proof}[Proof of Theorem \ref{thm:1insPFRec}]
In Proposition \ref{prop:disjointunion}, we showed that our four subsets partition \[ \textnormal{flat}_{k} (\mathcal{PF}_{n+1}(\{1\})) .\] Using Propositions \ref{prop:2n1}, \ref{prop:1n2}, and \ref{prop:1n1}, we have
\begin{equation*}
f(\{1\}; n+1,k) = k \cdot f(\{1\}; n,k) + (n-1) \cdot f(\{1\}; n-1,k-1) + f_{n,k-1}.\qedhere
\end{equation*}
\end{proof}

Finally, Nabawanda et al.\ \cite[Thm.\ 12, p.\ 7]{NaRaBa} gave one more recursion for $f_{n,k}$.

\begin{theorem}\label{thm:12inNAetal}
For all integers $ n,k $ such that $ 1 \leq k \leq n $, the number of flattened partitions of length $ n+2 $ with $ k $ runs satisfies the following recursion:
\begin{equation*}
f_{n+2,k} =f_{n+1,k} +\sum_{i=1}^n \binom{n}{i} f_{n+1-i, k-1}.
\end{equation*}
\end{theorem}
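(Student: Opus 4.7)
The plan is to derive the recursion algebraically from Theorem~\ref{thm:1inNAetal} together with Pascal's identity, rather than by a direct combinatorial decomposition. The base case $k=1$ is immediate: $f_{n+2,1}=f_{n+1,1}=1$ (the only one-run flattened partition is the identity), and $\sum_{i=1}^{n}\binom{n}{i}f_{n+1-i,0}=0$ since $f_{m,0}=0$ for $m\geq1$.

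For $k\geq 2$, I would apply Theorem~\ref{thm:1inNAetal} with $n$ replaced by $n+2$, obtaining
\[
f_{n+2,k} \;=\; \sum_{m=1}^{n}\left(\binom{n+1}{m}-1\right)f_{m,k-1}.
\]
Then I would use Pascal's identity $\binom{n+1}{m}=\binom{n}{m}+\binom{n}{m-1}$ to split the sum into two pieces:
\[
f_{n+2,k} \;=\; \sum_{m=1}^{n}\left(\binom{n}{m}-1\right)f_{m,k-1} \;+\; \sum_{m=1}^{n}\binom{n}{m-1}f_{m,k-1}.
\]

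The first sum collapses to $f_{n+1,k}$ by a second application of Theorem~\ref{thm:1inNAetal} (with $n$ replaced by $n+1$); the $m=n$ term vanishes because $\binom{n}{n}-1=0$, so the effective summation range $1\leq m\leq n-1$ matches that required by Theorem~\ref{thm:1inNAetal}. For the second sum, I would reindex by $i=n+1-m$: as $m$ ranges from $1$ to $n$, $i$ ranges from $n$ down to $1$, and the binomial symmetry yields $\binom{n}{m-1}=\binom{n}{n+1-m}=\binom{n}{i}$. This converts the second sum into $\sum_{i=1}^{n}\binom{n}{i}f_{n+1-i,k-1}$. Combining the two pieces gives the claimed recursion. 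The main obstacle is purely bookkeeping: correctly handling the boundary term at $m=n$ in the first application and verifying the reindexing via binomial symmetry in the second.
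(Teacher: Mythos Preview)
Your algebraic derivation is correct. The paper does not actually prove Theorem~\ref{thm:12inNAetal}; the result is quoted from Nabawanda, Rakotondrajao, and Bamunoba~\cite[Thm.~12]{NaRaBa}. In that reference---and in the paper's own proof of the generalization Theorem~\ref{thm:Ones_gen_func_recurs}---the argument is a direct combinatorial decomposition: one partitions flattened partitions of $[n+2]$ according to whether $1$ and $2$ lie in the same run (deleting the leading $1$ then gives a flattened partition of length $n+1$ with $k$ runs) or in different runs (the first run has length $i+1$ for some $1\leq i\leq n$, its non-leading entries are chosen in $\binom{n}{i}$ ways from $\{3,\ldots,n+2\}$, and the remainder forms a flattened partition of length $n+1-i$ with $k-1$ runs).

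Your route is genuinely different: you deduce the recursion from Theorem~\ref{thm:1inNAetal} purely via Pascal's identity and reindexing, never revisiting the combinatorial objects. This is short and makes explicit that the two recursions of Nabawanda et al.\ are algebraically equivalent. The combinatorial argument, by contrast, gives a bijective meaning to each summand---which is exactly what the paper exploits when it observes that Theorems~\ref{thm:12inNAetal} and~\ref{thm:Ones_gen_func_recurs} share the same right-hand side, whence $f(\{1\};n+1,k)=f_{n+2,k}$.
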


Our generalization of this recursion appears below. This recursion allows us to build a $1$-insertion parking function of length $n+2$ by considering whether or not the two ones appear in the same run or in distinct runs.

\begin{theorem}\label{thm:Ones_gen_func_recurs}

For all integers $ n,k $ such that $ 1 \leq k < n $, the number of $1$-insertion flattened parking functions with $k$ runs satisfies the recurrence relation
\begin{equation*}
f(\{1\};n+1,k) = f_{n+1,k} + \sum_{i=1}^n \binom{n}{i}
f_{n-i+1,k-1}.
\end{equation*}
\end{theorem}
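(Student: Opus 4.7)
My plan is to partition $\textnormal{flat}_k(\mathcal{PF}_{n+1}(\{1\}))$ into two subsets according to whether the two copies of $1$ (namely the inserted $1$ together with the $1$ already present in the underlying permutation) lie in the same run or in distinct runs, and then show that each subset contributes one of the two terms on the right-hand side of the recurrence.

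First, I would show that if the two $1$'s lie in a common run, they must occupy positions $1$ and $2$. Since a run is weakly increasing and $1$ is the smallest entry, any run containing a $1$ must have leading value $1$, and a run containing two $1$'s must begin with the subword $1, 1$. If this were the $j$th run for some $j \geq 2$, then the leading values of runs $1, \ldots, j-1$ would all be at most $1$, forcing more than two $1$'s in the parking function, a contradiction. Hence both $1$'s sit at positions $1$ and $2$. Deleting the $1$ at position $1$ produces a word of length $n+1$ on $[n+1]$ in which each value appears exactly once, i.e.\ a permutation of $[n+1]$; this permutation is flattened (the sequence of leading values is unchanged) and still has $k$ runs (the first run loses one element but remains a run, and subsequent runs are untouched). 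This map is clearly invertible by duplicating the leading $1$, so it is a bijection with $\mathcal{F}_{n+1,k}$, contributing $f_{n+1,k}$.

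Second, I would handle the case where the two $1$'s belong to different runs. Since leading values of runs are weakly increasing and both $1$'s must be leading values of their respective runs (by the same weak-ascent argument as above), they must be the leading values of the first and second runs. So the parking function has the form
\[
1, a_2, \ldots, a_{i+1}, 1, b_1, \ldots, b_{n-i},
\]
where $1 < a_2 < \cdots < a_{i+1}$ is the first run of length $i+1$, for some $i \in \{1, 2, \ldots, n\}$. The first run is determined by choosing the $i$-subset $\{a_2, \ldots, a_{i+1}\} \subseteq \{2, \ldots, n+1\}$, giving $\binom{n}{i}$ options. The suffix $1, b_1, \ldots, b_{n-i}$ has length $n+1-i$, begins with $1$, uses the remaining $n-i$ values from $\{2, \ldots, n+1\}$, and contains exactly the remaining $k-1$ runs. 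Relabeling the entries of this suffix by their ranks (so that the minimum $1$ remains $1$ and the other values map order-isomorphically to $\{2, \ldots, n+1-i\}$) is a bijection with $\mathcal{F}_{n+1-i,\, k-1}$. Summing over $i$ contributes $\sum_{i=1}^{n} \binom{n}{i}\, f_{n+1-i,\, k-1}$, and adding the two cases yields the claimed recurrence.

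The main subtlety is the first case: carefully justifying that two $1$'s in the same run force the parking function to start with $1, 1$ (rather than allowing the shared run to be a later one), and verifying that removing a duplicated leading $1$ preserves both the flattened property and the number of runs. Once this is established, the second case is a natural generalization of the argument used by Nabawanda et al.\ in Theorem~\ref{thm:12inNAetal}, with the enlarged range $i \in \{1, \ldots, n\}$ (compared to the classical range) accounting for the possibility that the first run exhausts all of $\{2, \ldots, n+1\}$ while the second run consists of the single remaining $1$.
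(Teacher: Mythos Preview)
Your proposal is correct and follows essentially the same approach as the paper: partitioning $\textnormal{flat}_k(\mathcal{PF}_{n+1}(\{1\}))$ according to whether the two ones lie in the same run or in distinct runs, mapping the first case bijectively to $\mathcal{F}_{n+1,k}$ by deleting a leading one, and handling the second case by choosing the non-one entries of the first run and relabeling the suffix as an element of $\mathcal{F}_{n-i+1,k-1}$. You supply more detail than the paper does in justifying that both ones must occupy positions $1$ and $2$ in the same-run case and must lead runs $1$ and $2$ in the distinct-runs case, but the decomposition and bijections are identical.
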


\begin{proof}
We construct a  $1$-insertion flattened parking function $\pi$ over $[n+1]$ with $k$ runs.
We consider the following two possibilities to enumerate 
$f(\{1\};n+1,k)$:
\begin{enumerate}
\item Suppose both ones are in the same run and $ \pi $ is of the form $ \pi = 1 \tau $, where $ \tau $ is a flattened partition of length $ n+1$ that begins with a one. Remove the first one, and note that what remains is a flattened partition of length $n+1$ with $k$ runs. 
\item Suppose the two ones are in different runs. Let the first run have $i+1$ terms including the first one. Since the one is fixed, we have $i$ terms which can be chosen from the set $\{2,3, \ldots, n+1\}$. There are $\binom{n}{i}$ ways to do this. 
The remaining $k-1$ runs have $n+2 - (i+1) = n - i + 1$ elements including the second instance of a one. Since the one is fixed, note that the remaining terms are all distinct values and behave exactly like a permutation in the set $F_{n-i+1, k-1}$, so they are enumerated by $f_{n-i+1,k-1}$. Thus, the total number of  $1$-insertion flattened parking functions with the two ones in separate runs is $\sum_{i=1}^{n} \left(\binom{n}{i} f_{n-i+1,k-1}\right)$.
\end{enumerate}
As the cases are disjoint, our count is given by
\begin{equation*}
f(\{1\};n+1,k) = f_{n+1,k} + 
\sum_{i=1}^{n} \binom{n}{i}f_{n-i+1, k-1}.\qedhere
\end{equation*}
\end{proof}

Theorem \ref{thm:Ones_gen_func_recurs} implies that $ \textnormal{flat} _k(\mathcal{PF}_{n+1}(\{1\})) $ and $\mathcal{F}_{n+2,k}$ are equinumerous.
We use the recurrence relation provided in Theorem \ref{thm:Ones_gen_func_recurs} when we study generating functions in Section \ref{sec:GenFunc}.

We recall that the Bell numbers (\seqnum{A000110}), denoted $B(n)$, counts the number of set partitions of $ [n] $ and has the exact same recurrence relation as that of $f(\{1\};n,k)=f_{n+1, k}$. This immediately implies the following.

\begin{corollary}\label{thm:BellNumbers1Ins} 
The cardinality of the set $\textnormal{flat}(\mathcal{PF}_{n}(\{1\}))$ is the Bell number $B(n)$. 
\end{corollary}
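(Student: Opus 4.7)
The plan is to combine two facts already established in the paper. First, observe that the right-hand sides of the recursion in Theorem \ref{thm:Ones_gen_func_recurs} (for $f(\{1\};n+1,k)$) and of the recursion in Theorem \ref{thm:12inNAetal} (for $f_{n+2,k}$) are literally the same expression
\begin{equation*}
f_{n+1,k} + \sum_{i=1}^n \binom{n}{i}\, f_{n+1-i,\,k-1}.
\end{equation*}
Hence $f(\{1\};n+1,k) = f_{n+2,k}$ for every admissible pair $(n,k)$, or equivalently, after reindexing, $f(\{1\};n,k) = f_{n+1,k}$.

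The boundary value $k=1$ lies outside the range $1\le k<n$ covered by Theorem \ref{thm:Ones_gen_func_recurs} and must be checked directly: a weakly increasing member of $\mathcal{PF}_n(\{1\})$ must begin with both copies of $1$ and continue with $2,3,\ldots,n$, forcing the unique word $112\cdots n$, so $f(\{1\};n,1)=1$; and similarly the unique flattened partition of length $n+1$ with one run is $12\cdots(n+1)$, so $f_{n+1,1}=1$. Summing the identity $f(\{1\};n,k)=f_{n+1,k}$ over $k\ge 1$ then yields
\begin{equation*}
|\textnormal{flat}(\mathcal{PF}_n(\{1\}))| \;=\; \sum_{k\geq 1} f(\{1\};n,k) \;=\; \sum_{k\geq 1} f_{n+1,k},
\end{equation*}
which is the total number of flattened partitions of length $n+1$. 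By the bijection of Nabawanda et al.\ (recorded in the introduction) between flattened partitions of length $m$ and set partitions of $[m-1]$, this count equals the Bell number $B(n)$, as required.

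There is essentially no genuine obstacle to this argument: the substantive work has already been carried out in Theorems \ref{thm:Ones_gen_func_recurs} and \ref{thm:12inNAetal} and in the bijection of Nabawanda et al. The only mild care required is the boundary bookkeeping for $k=1$ described above, together with checking that the index shift between ``length $n+1$ parking functions'' and ``length $n+1$ flattened partitions'' is applied consistently so that the Bell number indeed comes out as $B(n)$ rather than $B(n-1)$ or $B(n+1)$.
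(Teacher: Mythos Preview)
Your proof is correct and follows essentially the same approach as the paper: compare the recursions of Theorems~\ref{thm:Ones_gen_func_recurs} and~\ref{thm:12inNAetal} to obtain $f(\{1\};n,k)=f_{n+1,k}$, sum over $k$, and invoke the Nabawanda et al.\ bijection between flattened partitions of length $n+1$ and set partitions of $[n]$. Your version is in fact more careful than the paper's one-line justification, since you explicitly verify the boundary case $k=1$ that lies outside the hypotheses of Theorem~\ref{thm:Ones_gen_func_recurs}.
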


Also we recall that the Eulerian numbers, denoted $ A(n,1) $, counts the number of permutations of length $ n $ with one descent. This is equivalent to the number of permutations of length $ n $ with tqo runs. This in turn implies the following.

\begin{corollary}\label{thm:EulerianNums} 
The cardinality of the set $\textnormal{flat} _2(\mathcal{PF}_n(\{1\}))$ is the Eulerian number $A(n,1)$.
\end{corollary}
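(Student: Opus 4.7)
The plan is to chain two ingredients together: the equinumerosity $|\textnormal{flat}_{2}(\mathcal{PF}_n(\{1\}))|=f_{n+1,2}$, which is the remark stated immediately after Theorem~\ref{thm:Ones_gen_func_recurs}, and a direct bijection between $\mathcal{F}_{n+1,2}$ and the permutations of $[n]$ with exactly one descent. The second set has cardinality $A(n,1)$ by definition of the Eulerian number, so proving $f_{n+1,2}=A(n,1)$ would finish the proof.

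The main step I would carry out is the bijection. First I would show that every $\sigma\in\mathcal{F}_{n+1,2}$ must begin with $1$: if $\sigma_1>1$, then the value $1$ appears later in $\sigma$ and, being the smallest possible entry, must be the leading term of the run in which it sits; but then the sequence of leading terms would start at $\sigma_1>1$ and later drop to $1$, contradicting the flattened condition. So every $\sigma\in\mathcal{F}_{n+1,2}$ has the form $\sigma=1\,\sigma_2\sigma_3\cdots\sigma_{n+1}$ where $\sigma_2\cdots\sigma_{n+1}$ is a permutation of $\{2,\ldots,n+1\}$. I would then define
\[
\phi:\mathcal{F}_{n+1,2}\longrightarrow\{\tau\in\mathfrak{S}_n:\tau\text{ has exactly one descent}\},\qquad
\phi(\sigma)=(\sigma_2-1)(\sigma_3-1)\cdots(\sigma_{n+1}-1),
\]
which is well-defined because subtracting one from each entry of a permutation of $\{2,\ldots,n+1\}$ yields a permutation of $[n]$. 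The ascent/descent pattern at positions $2,\ldots,n$ of $\sigma$ matches that of $\phi(\sigma)$ verbatim, and the ascent from $\sigma_1=1$ to $\sigma_2$ is simply dropped; hence $\sigma$ has one descent if and only if $\phi(\sigma)$ does. The inverse $\phi^{-1}$ adds $1$ to every entry of $\tau$ and prepends a $1$; prepending a $1$ cannot create a new descent and automatically makes $1$ the first (and smallest possible) leading term, so $\phi^{-1}(\tau)\in\mathcal{F}_{n+1,2}$.

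The main (and really only) obstacle is the $\sigma_1=1$ reduction; once that is in place, the shift $\phi$ and its inverse are routine and visibly preserve the descent count. Concluding gives $f_{n+1,2}=A(n,1)$, and combining with the equinumerosity from Theorem~\ref{thm:Ones_gen_func_recurs} yields $|\textnormal{flat}_{2}(\mathcal{PF}_n(\{1\}))|=A(n,1)$, as required.
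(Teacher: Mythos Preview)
Your proposal is correct and follows essentially the same route as the paper. The paper deduces the corollary from the equinumerosity $|\textnormal{flat}_k(\mathcal{PF}_n(\{1\}))|=f_{n+1,k}$ noted after Theorem~\ref{thm:Ones_gen_func_recurs} together with the observation that $A(n,1)$ counts permutations of $[n]$ with two runs, but it leaves the identification $f_{n+1,2}=A(n,1)$ implicit; your bijection $\phi$ simply makes that step explicit.
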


Next we provide a more general result for the case when $k>2$ as the values of $f(\{1\};n+1,k)$ coincides with the triangle in \seqnum{A124324}. We remark that this result was not noted 
by Nabawanda et al.\ \cite{NaRaBa}, though their data corresponds with ours.

\begin{definition}
Let $T(n,k)$ denote the number of ways to partition the set $ [n] $ such that there are exactly $k$ sets in the partition with at least two elements.
\end{definition}

For example, if $ n = 3 $ and $ k = 1 $, the set partitions\footnote{As is standard, when writing a set partition  of $[n]$ we use the notation ``/'' to separate the subsets used and we refer to each subset as a block.} of $[3]$ counted by $T(3,1)$ are  
$123$, $1/23$, $2/13$, and $3/12$, which are the only ways to partition $ [3] $ into subsets where exactly one of the subsets has two or more elements. Thus, we get $T(3,1)=4$.

\begin{proposition}\label{prop:k+1Runs_to_kBlocks}
If $ n \geq 1 $ and $ k \geq 1 $, then
$
f(\{1\};n,k+1) = T(n,k)
$.
\end{proposition}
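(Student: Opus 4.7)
The plan is to prove $f(\{1\};n,k+1) = T(n,k)$ by matching the recurrences satisfied by the two sides. Applying Theorem~\ref{thm:Ones_gen_func_recurs} with $k$ replaced by $k+1$ gives
\[f(\{1\}; n+1, k+1) = f_{n+1, k+1} + \sum_{i=1}^{n} \binom{n}{i} f_{n-i+1, k}.\]
In parallel, I will derive a recurrence for $T(n+1,k)$ by conditioning on the block of a set partition of $[n+1]$ that contains the element $n+1$: either $n+1$ is a singleton, giving $T(n,k)$ valid partitions, or $n+1$ sits in a block of size $i+1 \geq 2$ whose remaining $i$ members are drawn from $[n]$ in $\binom{n}{i}$ ways, and the induced partition on the rest admits $T(n-i,k-1)$ choices. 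Summing yields
\[T(n+1, k) = T(n, k) + \sum_{i=1}^{n} \binom{n}{i} T(n-i, k-1).\]

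These two recurrences will match term-for-term under the identification $f_{m+1, j+1} = T(m, j)$, which is the statistic-preserving refinement of Nabawanda et al.'s bijection between flattened partitions and set partitions (flagged in the introduction as the observation that $f_{n,k}$ enumerates the triangle \seqnum{A124234}). Together with the base case $n=1$---where $\textnormal{flat}_{k+1}(\mathcal{PF}_1(\{1\}))$ contains only the length-two parking function $11$ when $k=0$ and is empty otherwise, matching $T(1,k) = [k=0]$---induction on $n$ closes the argument.

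The main obstacle is justifying the auxiliary identity $f_{m+1,j+1} = T(m,j)$ rigorously. Since \cite{NaRaBa} establish the underlying bijection between flattened partitions and set partitions without explicitly identifying \seqnum{A124234} as the refined enumeration, the cleanest self-contained route is to observe that Theorem~\ref{thm:12inNAetal}'s recurrence for $f_{m+2, j+1}$ coincides in form with the $T(m+1,j)$ recurrence derived above, so an analogous induction establishes the identification. Alternatively, one can trace through the Nabawanda et al.\ bijection and check directly that the number of runs on the permutation side corresponds to one more than the number of non-singleton blocks on the set-partition side, which is exactly the statistic the introduction claims is preserved.
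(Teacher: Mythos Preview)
Your proposal is correct, but it takes a different route from the paper. The paper gives a direct bijective argument: starting from a set partition $\mathcal{B}_1/\cdots/\mathcal{B}_\ell$ of $[n]$ with the blocks ordered by their minima, it cycles each block $\{b_1<\cdots<b_m\}$ to the word $b_2\cdots b_m b_1$, concatenates, and prepends a $1$ to obtain a word on the multiset $[n]\cup\{1\}$. It then checks that this word lies in $\textnormal{flat}_{k+1}(\mathcal{PF}_n(\{1\}))$ precisely when $\mathcal{B}$ has exactly $k$ non-singleton blocks, and that the construction inverts.

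Your approach instead matches recurrences, which is fine, but note that it is slightly redundant as written. Comparing Theorem~\ref{thm:Ones_gen_func_recurs} with Theorem~\ref{thm:12inNAetal} already yields $f(\{1\};n,k)=f_{n+1,k}$ for all $n,k$ (the paper states this explicitly just after Theorem~\ref{thm:Ones_gen_func_recurs}), so the main claim is \emph{equivalent} to your auxiliary identity $f_{m+1,j+1}=T(m,j)$. There is no need to set up a separate induction for $f(\{1\};n,k+1)=T(n,k)$; it suffices to prove the auxiliary identity once, either by your route (a) (matching Theorem~\ref{thm:12inNAetal} against the $T$-recurrence you derived, by induction on $m$) or by your route (b) (the Nabawanda et al.\ bijection). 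Route (b) is exactly what the paper carries out, just phrased with the target space being $\textnormal{flat}(\mathcal{PF}_n(\{1\}))$ rather than $\mathcal{F}_{n+1}$. Route (a) is a genuine alternative that trades the work of verifying a bijection for checking initial conditions; if you take it, be mindful that Theorem~\ref{thm:12inNAetal} is stated only for $1\le k\le n$, so the small-$n$ and large-$k$ boundary cases (where both sides vanish) should be noted separately.
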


\begin{proof}
We utilize the bijective map, $f$, given by Beyene and Mantaci \cite[Proposition 11]{FBRM} and Nabawanda et al.\ \cite[Proposition 16]{NaRaBa}. This map allows us to take a set partition of $[n]$, and create a flattened partition of $[n+1]$. Because the sets $\mathcal{F}_{n+1}$ and $\textnormal{flat}(\mathcal{PF}_n(\{ 1\}))$ are in bijection, we use this map to go from set partitions of $[n]$ to $\textnormal{flat}(\mathcal{PF}_n(\{ 1\}))$. 
In what follows, we show that in fact the map $f$ takes set partitions of $[n]$ which have exactly $k$ blocks with size greater than or equal to two, and maps them bijectively to
flattened partitions with $k+1$ runs, which we have shown (Theorem \ref{thm:Ones_gen_func_recurs}) are in bijection with elements of $\textnormal{flat}(\mathcal{PF}_n(\{1\}))$.

To begin we let $\mathcal{B}=\mathcal{B}_1/\mathcal{B}_2/\cdots/\mathcal{B}_\ell$ be a set partition of $[n]$ where exactly $k$ of the blocks have size at least two and the remaining $\ell-k$ blocks consist of a single element. 
Moreover, we assume that $\min(\mathcal{B}_i)<\min (\mathcal{B}_{i+1}) $ for all $i\in[\ell-1]$.
For a block $\mathcal{B}_i$ with size at least two, we create a subword $\sigma_i$ as follows: if $\mathcal{B}_i=\{b_1,b_2,\ldots,b_m\}$ with $b_1<b_2<\cdots<b_m$, we ``cycle'' the elements so that they are listed in the order $b_2b_3\cdots b_m b_1=\sigma_i$. Note that when $\mathcal{B}_i$ consists of a single element we let the corresponding subword $\sigma_i$ simply be that value. 
In this way we construct a word from the set partition $\mathcal{B}$ through concatenating the subwords to form the word $\sigma_1\sigma_2\cdots\sigma_\ell$.
We then introduce a one at the start of the word $\sigma_1\sigma_2\cdots\sigma_\ell$ to create $a=1\sigma_1\sigma_2\cdots\sigma_\ell$. 

We claim $a$ is a $1$-insertion flattened parking function with $k+1$ runs. 
To prove this we begin by noting that in fact the entries in $a$ consist of the elements in the set $[n]\cup\{1\}$. 
Next we check the number of runs. 
First note that if $\mathcal{B}_i$ has size at least 2, then $\sigma_i$ introduces a descent at its penultimate letter/index. 
Hence there are at least $k$ descents, as there are $k$ blocks of size at least 2. 
To see that there are exactly $k$ descents, note that all remaining blocks have size one, and because the blocks are ordered based on their minimum values, they never introduce a new descent when creating $a$.
This means that $a$ has exactly $k$ descents and hence $k+1$ runs.
The result is flattened since we placed a one at the start of $a$, which ensures that the first run begins with a one, and since the cycling process along with the ordering of the blocks based on their minimums being in increasing order, ensures that the runs in $a$ satisfy that their leading values are in weakly increasing order. 
Thus, $a$ is in $\textnormal{flat}_{k+1}(\mathcal{PF}_n(\{1\}))$.

So far, we have shown that under this map, $f$, an arbitrary set partition with exactly $k$ blocks of size at least two produce an element in $\textnormal{flat}_{k+1}(\mathcal{PF}_n(\{1\}))$.

Now, suppose that we have an arbitrary element, $\alpha$ in $\textnormal{flat} (\mathcal{PF}_n(\{1\}))$. Suppose that, under $f^{-1}$, $\alpha$ maps to a set partition with exactly $k$ blocks of size at least two, call this corresponding set partition $\mathcal{B}$. We have shown that $f(\mathcal{B})$ is in $\textnormal{flat}_{k+1}(\mathcal{PF}_n(\{1\}))$, and since $f$ is bijective, we must have that $f(\mathcal{B}) = \alpha$. Therefore, $\alpha$ must have been an element in $\textnormal{flat}_{k+1}(\mathcal{PF}_n(\{1\}))$.
\end{proof}

\subsection{The case of \texorpdfstring{$r\geq 2$}{Lg}} \label{sec:manyOnes}
Throughout we let $ \mathbf{1}_{r} $ denote the multiset consisting of $r$ ones. In this section we extend our results to the case $ \textnormal{flat} (\mathcal{PF}_{n} (\mathbf{1}_{r})) $ for $r\geq 2$.

\subsubsection{Recursions for \texorpdfstring{$\textnormal{flat} (\mathcal{PF}_{n} (\mathbf{1}_r )) $}{Lg}}
The following is a generalization of the result of Nabawanda et al.\ \cite[Thm.\ 1, p.\ 4]{NaRaBa} and Theorem \ref{thm:thm1with1one}.

\begin{theorem} \label{Thm:rOnesRecurs}
For $2 \leq k < n+r $, the number of $ \mathbf{1}_{r} $-insertion flattened parking functions is enumerated by the following recursion:
\begin{equation}\label{eq:r1s_first_rec}
f(\mathbf{1}_{r};n,k) = \sum_{m=1}^{n-1}\sum_{i=1}^r \binom{n-1}{m-1} \cdot f(\mathbf{1}_{r-i};m,k-1) + \sum_{m=1}^{n-1}\left(\binom{n-1}{m-1}-1\right) \cdot f_{m,k-1}.
\end{equation}
\end{theorem}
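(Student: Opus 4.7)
The plan is to mimic and extend the argument used to prove Theorem~\ref{thm:thm1with1one}. Given an arbitrary $\gamma\in\textnormal{flat}_k(\mathcal{PF}_n(\mathbf{1}_r))$, I will decompose it as $\gamma = R\beta$, where $R$ is the first run and $\beta$ collects the remaining $k-1$ runs, and count the possibilities for each piece. Since $\gamma$ contains a total of $r+1$ ones (one from the underlying permutation and $r$ inserted) and $R$ must begin with $1$ and be weakly ascending, one necessarily has $R = 1^{i}\, s_1 s_2 \cdots s_b$ for some $i\geq 1$ and some strictly ascending sequence $s_1 < s_2 < \cdots < s_b$ drawn from $\{2,\ldots,n\}$. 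The stipulation that $R$ really is the first run of $\gamma$ (rather than a proper prefix of it) is equivalent to $\beta_1 < s_b$, which in particular forces $b\geq 1$: otherwise $R = 1^{i}$ and any $\beta_1\geq 1$ would extend the weak ascent and prevent $R$ from terminating.

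Next I would split the enumeration into two cases according to whether $\beta$ still contains a $1$. In the first case $\beta$ has at least one $1$, so $i\in\{1,\ldots,r\}$ and the condition $\beta_1 = 1 < s_b$ is automatic. Writing $m$ for the length of the base permutation from which $\beta$ is built, the non-one entries of $R$ form a size-$(n-m)$ subset of $\{2,\ldots,n\}$, giving $\binom{n-1}{n-m}=\binom{n-1}{m-1}$ admissible choices; relabelling the non-one entries of $\beta$ in an order-preserving manner then identifies $\beta$ with an arbitrary element of $\textnormal{flat}_{k-1}(\mathcal{PF}_m(\mathbf{1}_{r-i}))$. Summing over $i\in\{1,\ldots,r\}$ and $m\in\{1,\ldots,n-1\}$ produces the first double sum in~\eqref{eq:r1s_first_rec}.

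In the second case all $r+1$ ones sit inside $R$ (so $i=r+1$) and $\beta$ is an ordinary flattened partition of non-one elements, counted after the same order-preserving relabelling by $f_{m,k-1}$. Here the constraint $\beta_1 < s_b$ excludes exactly the one configuration in which $\{s_1,\ldots,s_b\}$ is the bottom-$b$ subset $\{2,\ldots,b+1\}$ of $\{2,\ldots,n\}$ (for then $\min\beta = b+2 > b+1 = s_b$, and the run would extend into $\beta$); every other choice leaves some element of $\{2,\ldots,b+1\}$ available as $\beta_1$, guaranteeing $\beta_1\leq b+1 < s_b$. This accounts for the $\binom{n-1}{m-1}-1$ factor, and summing over $m$ yields the second term of~\eqref{eq:r1s_first_rec}. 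Since the two cases partition $\textnormal{flat}_k(\mathcal{PF}_n(\mathbf{1}_r))$, adding their contributions recovers~\eqref{eq:r1s_first_rec}.

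The main technical obstacle is the bookkeeping between the parameters $i$, $b$, and $m$, together with a careful verification in each case that the choice of non-one entries of $R$ genuinely terminates the first run at $s_b$; the ``$-1$'' in the second sum must be argued uniquely from the extremal subset $\{2,\ldots,b+1\}$. Once this dictionary is fixed, the enumerative contents of the two subcases follow from the very same order-preserving relabelling used in the proof of Theorem~\ref{thm:thm1with1one}.
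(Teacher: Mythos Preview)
Your proposal is correct and follows essentially the same approach as the paper's proof: both decompose an element of $\textnormal{flat}_k(\mathcal{PF}_n(\mathbf{1}_r))$ into its first run and the remaining $k-1$ runs, split into two cases according to whether all $r+1$ ones lie in the first run, and count each piece via the same order-preserving relabelling argument used for Theorem~\ref{thm:thm1with1one}. Your justification of the ``$-1$'' in the second sum via the extremal subset $\{2,\ldots,b+1\}$ is exactly the paper's exclusion of $\sigma = 23\cdots(n-m+1)$, phrased slightly more explicitly.
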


\begin{proof}
To begin we remark that all elements in $\textnormal{flat} (\mathcal{PF}_{n} (\mathbf{1} _{r})) $ have exactly $r+1$ ones. 
To construct an element in $ \textnormal{flat} (\mathcal{PF}_{n} (\mathbf{1} _{r})) $, we fix $i$, the number of ones in the first run. Note that $i$ satisfies
$ 1 \leq i \leq r+1 $.

Our first case is when $1\leq  i \leq r $. 
In this case, there exists at least one $1$ and at most $r-i+1$ ones in the last $k-1$ runs. 
In order to be flattened, the first run has the form $\underbrace{ 1 1 1 \cdots 1 }_{i}\sigma $, where $ \sigma $ is a non-empty word containing $n-m$ values chosen from the set $\{2,3,4,\ldots,n\}$. Note that this implies that $1\leq m\leq n-1$.
In this case there are $\binom{n-1}{n-m}=\binom{n-1}{m-1}$ choices for the values in $ \sigma $, which must be arranged in increasing order so as to construct the first run.
Then, for a given $ \sigma $, we construct the remaining $k-1$ runs, by forming the word $ \tau $ from the values in the complement of the set $\{2,3,\ldots,n\}$ used to form $\sigma$ along with the remaining $r-i+1$ ones. 
This ensures that $\tau$ has length $m+r-i$.

To construct $\tau$, we take an element of 
$ \textnormal{flat}_{k-1} (\mathcal{PF}_{m} (\mathbf{1}_{r-i})) $, and arrange $ r-i $ ones, in addition to the $m$ elements from $\{2,3,4,\ldots,n\}$ that are not included in $ \sigma $.
Again note that this ensures the length of $\tau$ is $m+r-i$, as expected.
Each element $\alpha\in \textnormal{flat}_{k-1} (\mathcal{PF}_{m} (\mathbf{1}_{r-i})) $ gives a unique $\tau$, by arranging the values of $\tau$ in the same relative order as those in $\alpha$.
This
yields $ f (\mathbf{1}_{r-i};m,k-1) $ options for our last $ k - 1 $ runs which are formed by $\tau$. 
By varying $i$ from one to $r$ and $m$ from one to $n-1$ we arrive at the left-most sum on the right hand-side of Equation \eqref{eq:r1s_first_rec}.

The second case counts the number of $ 1 $-insertion parking functions with all ones in the first run.
We construct such a parking function in a similar fashion, where $ \sigma $ and $ \tau $ are arranged to get $\underbrace{ 1 1 1 1\cdots 1}_{r+1}  \sigma \tau $.
To construct a word $ \sigma $ of length $ n - m $, we selected $n-m$ elements of $ \{2,3,\ldots,n\} $, which we can do in $ \binom{n-1}{n-m} = \binom{n-1}{m-1} $ ways.
We then eliminate the single case where the values of $ n - m $ are in increasing order $234\cdots n-m+1$, in which case the smallest value of $ \tau $ would be greater than the greatest value of $ \sigma $, so $ \tau $ would not create a new run as required.
This gives a total of $\binom{n-1}{m-1}-1$ options for $\sigma$.
Next, we have $ m $ elements of $\{2,3,4,\ldots,n\}$ that are not used in $ \sigma $, which are used to create $
\tau$. 
Note that every
$\alpha\in \mathcal{F}_{m,k-1}$ 
gives a unique $\tau$, by arranging the values of $\tau$ in the same relative order as those in $\alpha$.
This
yields $ f _{m,k-1 }$ options for our last $ k - 1 $ runs which are formed by $\tau$. 
Thus, for each $m\in[n-1]$, there are $(\binom{n-1}{m-1}-1)f_{m,k-1}$ options for these $\mathbf{1}_r$-insertion parking functions in this case.
By varying $m$ from one to $n-1$ we arrive at the right-most sum on the right hand-side of Equation \eqref{eq:r1s_first_rec}.

As the two cases discussed are disjoint and account for all possibilities, their sum yields
\[f(\mathbf{1}_{r};n,k) = \sum_{m=1}^{n-1}\sum_{i=1}^r \binom{n-1}{m-1} \cdot f(\mathbf{1}_{r-i};m,k-1) + \sum_{m=1}^{n-1}\left(\binom{n-1}{m-1}-1\right) \cdot f_{m,k-1}.\qedhere\]
\end{proof}
The following is a generalization of the result of Nabawanda et al.\
\cite[Thm.\ 10, p.\ 6]{NaRaBa} and Theorem~\ref{thm:1insPFRec}.

\begin{theorem}\label{thm:R1insPFRec}
The cardinality of $ \textnormal{flat}_{k} (\mathcal{PF}_{n} (\mathbf{1}_{r})) $ satisfies the recurrence relation
\begin{equation*}
f (\mathbf{1}_{r};n+1,k) = 
k \cdot f (\mathbf{1}_{r};n,k) + 
(n - 1 ) \cdot f (\mathbf{1}_{r};n-1,k-1) + 
r \cdot f (\mathbf{1}_{r-1};n,k-1).
\end{equation*}

\end{theorem}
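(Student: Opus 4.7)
The plan is to generalize the four-way partition used in the proof of Theorem \ref{thm:1insPFRec}. Given $\alpha\in\textnormal{flat}_k(\mathcal{PF}_{n+1}(\mathbf{1}_r))$, the letter $n+1$ appears exactly once, so following Proposition \ref{prop:disjointunion} the target set will be partitioned by the local context of $n+1$ into the cases (A) $\alpha$ ends in $n+1$; (B1) $\alpha$ contains the subword $1(n+1)1$; (B2) $\alpha$ contains a subword $a(n+1)b$ with $a<b$; and (B3) $\alpha$ contains a subword $a(n+1)b$ with $a>b$. This is a genuine partition because whenever $n+1$ is not at the end, both neighbors lie in $[n]$, and the equality $a=b$ forces $a=b=1$ since $1$ is the only value appearing with multiplicity among the entries of $\alpha$.

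For cases (A) and (B3), the plan is to use the device of Proposition \ref{prop:2n1}: start from $\beta\in\textnormal{flat}_k(\mathcal{PF}_n(\mathbf{1}_r))$ and insert $n+1$ at the end of any of its $k$ runs. Insertion at the end of the terminal run produces case (A); insertion at the end of any of the other $k-1$ runs produces case (B3), since each such run ends with a strict descent to a smaller value. Because $n+1$ is maximal, flattenedness is preserved and the number of runs is unchanged, so these two cases jointly contribute $k\cdot f(\mathbf{1}_r;n,k)$. For case (B2) the plan is to replay the construction of Proposition \ref{prop:1n2} with $\{1\}$ replaced by $\mathbf{1}_r$: for $\pi\in\textnormal{flat}_{k-1}(\mathcal{PF}_{n-1}(\mathbf{1}_r))$ and $i\in[n-1]$, add $1$ to every entry of $\pi$ exceeding $i$ and then insert the block $(n+1)(i+1)$ immediately to the right of the rightmost entry belonging to $\{1,\ldots,i\}$. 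The extra $1$s do not obstruct the verification carried out there, yielding a contribution of $(n-1)\cdot f(\mathbf{1}_r;n-1,k-1)$.

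The novel case is (B1), and the key observation to be made is that because every run is weakly ascending, as soon as $1$ immediately precedes $n+1$ the entire run ending at $n+1$ must have the form $\underbrace{1\cdots 1}_{j}(n+1)$ for some $j\geq 1$, and the following run must begin with $1$. The bijection will be built from the reverse direction: for each $\gamma\in\textnormal{flat}_{k-1}(\mathcal{PF}_n(\mathbf{1}_{r-1}))$ and each of the $r$ ones appearing in $\gamma$, form a new word by inserting the pair $(n+1)1$ immediately after that $1$. The inverse operation deletes $n+1$ together with the $1$ immediately following it; the two runs meeting at $(n+1)1$ then merge into a single run still beginning with $1$, so the output is flattened with exactly $k-1$ runs. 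Since $n+1$ appears only once, each $\alpha$ determines a unique $\gamma$, and the $r$ insertion positions in a given $\gamma$ produce $r$ distinct elements of (B1), supplying the contribution $r\cdot f(\mathbf{1}_{r-1};n,k-1)$.

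The main obstacle will be the well-definedness check in case (B1): it must be shown that, regardless of whether the chosen $1$ in $\gamma$ is followed within its run by another $1$, by a larger value, or is the last letter of its (necessarily terminal) run, the insertion creates exactly one new descent (hence one new run) and preserves the weakly increasing condition on leading values. Once each of the three contributions $k\cdot f(\mathbf{1}_r;n,k)$, $(n-1)\cdot f(\mathbf{1}_r;n-1,k-1)$, and $r\cdot f(\mathbf{1}_{r-1};n,k-1)$ is established, summing over the partition yields the stated recurrence.
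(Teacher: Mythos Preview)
Your proposal is correct and follows essentially the same route as the paper: the paper proves the recurrence via Propositions~\ref{prop:Rdisjointunion}, \ref{prop:R2n1}, \ref{prop:R1n2}, and \ref{prop:R1n1multiple}, which correspond precisely to your partition and your handling of (A)$+$(B3), (B2), and (B1). Your description of the (B1) bijection as ``insert $(n+1)1$ after a chosen $1$'' is equivalent to the paper's ``insert $1(n+1)$ to the left of a chosen $1$'', and your observation that the run containing $n+1$ must have the form $1^{j}(n+1)$ is the key structural fact making the inverse well-defined---a point the paper's Proposition~\ref{prop:R1n1multiple} leaves largely implicit.
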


The proof of Theorem \ref{thm:R1insPFRec} is a direct consequence of following technical results, which are analogous as those in the $r=1$ case presented in the previous section. 

\begin{proposition}\label{prop:Rdisjointunion}
    The set $\textnormal{flat} _k(\mathcal{PF}_{n+1}(\mathbf{1}_r))$ can be partitioned (as a set) into the following four subsets:
\begin{enumerate}
    \item elements ending with $ n+1 $,
    \item elements beginning with the subword $1(n+1)1$,
    \item elements containing the subword $a(n+1)b$ with $a<b$,
    \item elements containing the subword $a(n+1)b$ with $a>b$.
\end{enumerate}
\end{proposition}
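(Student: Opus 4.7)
The plan is to mirror the case analysis from Proposition~\ref{prop:disjointunion}, with only mild modifications needed to accommodate $r+1$ ones instead of two. Let $\alpha \in \textnormal{flat}_k(\mathcal{PF}_{n+1}(\mathbf{1}_r))$. Since $\alpha$ is built by inserting $r$ ones into a permutation of $[n+1]$, I first observe that each of $2,3,\ldots,n+1$ appears exactly once in $\alpha$; in particular, $n+1$ has a single occurrence. Next, I would rule out the possibility that $\alpha$ begins with $n+1$: if it did, then because $n+1$ is the maximum entry, the first run would be the singleton $(n+1)$, so the second run would need a leading value at least $n+1$, which is impossible. Therefore $n+1$ is either the final entry of $\alpha$ or strictly interior to $\alpha$.

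In the former case $\alpha$ lies in the first subset. In the latter case, there exist unique entries $a$ and $b$ such that $a(n+1)b$ is a subword of $\alpha$, and I would trichotomize on the pair $(a,b)$. If $a < b$ then $\alpha$ belongs to the third subset, and if $a > b$ then $\alpha$ belongs to the fourth subset, each by definition. The remaining possibility is $a = b$, and here I would use the fact that the only repeated entries in $\alpha$ are ones (since $2,3,\ldots,n+1$ each appear once) to conclude $a = b = 1$, so $\alpha$ contains the subword $1(n+1)1$ and lies in the second subset.

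Finally, I would verify that the four subsets are pairwise disjoint and jointly exhaust $\textnormal{flat}_k(\mathcal{PF}_{n+1}(\mathbf{1}_r))$: disjointness holds because the single occurrence of $n+1$ is either final or admits a unique flanking pair $(a,b)$ for which exactly one of $a=b$, $a<b$, $a>b$ holds; exhaustiveness is immediate from this trichotomy together with the fact that $n+1$ cannot lead $\alpha$. The only non-bookkeeping content of the proof is the flattenedness argument ruling out $n+1$ at the very start, together with the identification $a=b=1$ via uniqueness of entries other than $1$. I do not anticipate any serious obstacle, since the argument is essentially the $r=1$ proof verbatim once one notes that ``the only repeated value is one'' remains true for the multiset $\mathbf{1}_r$.
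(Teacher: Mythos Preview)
Your approach mirrors the paper's exactly: its proof is the single sentence ``This proof is identical to that of Proposition~\ref{prop:disjointunion},'' and you have reproduced that argument together with the observation that only the value $1$ repeats.

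One caveat worth flagging: in the $a=b$ case you correctly conclude that $\alpha$ \emph{contains} the subword $1(n+1)1$, but subset~2 as stated demands that $\alpha$ \emph{begin} with this subword. For $r=1$ this follows automatically (the two ones are both consumed by the subword, and a flattened word must start with its minimum), but for $r\geq 2$ it can fail---for instance $11312\in\textnormal{flat}_2(\mathcal{PF}_3(\mathbf{1}_2))$ contains $131$ without beginning with it. The paper's own proof inherits the same imprecision by citing the $r=1$ argument verbatim; the downstream enumeration in Proposition~\ref{prop:R1n1multiple} in fact treats case~2 as ``containing $1(n+1)1$,'' which is precisely what your argument establishes and what the partition actually requires.
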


\begin{proof} This proof is identical to that of Proposition \ref{prop:disjointunion}.
\end{proof}

\begin{proposition}\label{prop:R2n1}
The subset of $ \textnormal{flat}_{k} (\mathcal{PF}_{n+1} (\mathbf{1}_{r})) $ containing the subword 
$
a(n+1)b$ where $a>b$ or $ n + 1 $ at the end of the parking function is enumerated by
\begin{equation*}
k \cdot f (\mathbf{1}_{r};n,k).
\end{equation*}
\end{proposition}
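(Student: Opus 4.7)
The plan is to follow essentially the same strategy as in Proposition~\ref{prop:2n1}, which handled the special case $r=1$. The core idea is that the subset of $\textnormal{flat}_{k}(\mathcal{PF}_{n+1}(\mathbf{1}_{r}))$ described corresponds exactly to those elements obtained from an element of $\textnormal{flat}_{k}(\mathcal{PF}_{n}(\mathbf{1}_{r}))$ by inserting the new maximal letter $n+1$ at a position that does not increase the run count. I aim to set up an explicit bijection between $\textnormal{flat}_{k}(\mathcal{PF}_{n}(\mathbf{1}_{r})) \times [k]$ and the described subset.

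First, I would start with an arbitrary $\alpha \in \textnormal{flat}_{k}(\mathcal{PF}_{n}(\mathbf{1}_{r}))$ and describe the insertion map: for each $j \in [k]$, insert $n+1$ immediately after the last letter of the $j$-th run of $\alpha$. Since $n+1$ exceeds every letter appearing in $\alpha$, the weak-ascent closing the $j$-th run is preserved (the previous last element is at most $n$, so it is weakly below $n+1$), and if the run was not terminal the descent into the next run is preserved (whatever begins the next run is strictly less than $n+1$). The leading values of all runs are unchanged, so the flattened property is preserved. The resulting word has $k$ runs, lies in $\mathcal{PF}_{n+1}(\mathbf{1}_{r})$ by Theorem~\ref{thm:insertionAreParkingFxn}, and, because $n+1$ either sits at the end or is followed by a smaller letter, lands in the described subset. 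Distinct choices of $(\alpha,j)$ yield distinct words, so this contributes $k \cdot f(\mathbf{1}_{r};n,k)$ elements.

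Next, I would verify that the map is surjective by constructing its inverse. Given any $\beta$ in the described subset, its unique occurrence of $n+1$ must close some run of $\beta$: either $n+1$ is the final letter of $\beta$, or $\beta$ contains a subword $a(n+1)b$ with $a>b$, forcing a descent immediately after $n+1$. Deleting this occurrence of $n+1$ produces a word $\alpha$ of length $n+r$ whose multiset of entries is $[n]\cup\mathbf{1}_{r}$; the letter that was previously $a$ now closes the corresponding run, and since $a$ was already less than or equal to $n+1$, the remaining run structure and flattened property are unchanged. Thus $\alpha \in \textnormal{flat}_{k}(\mathcal{PF}_{n}(\mathbf{1}_{r}))$, and applying the insertion map to $(\alpha,j)$ for the appropriate $j$ recovers $\beta$.

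The argument is conceptually light because $n+1$ is an extremal letter, so the bulk of the work amounts to checking that insertion (respectively deletion) of $n+1$ at an end-of-run position preserves (i) the weakly increasing sequence of run-leaders and (ii) the multiset of letters used. The one place where I would slow down and double-check is the case where the selected run is the very last run of $\alpha$: the insertion of $n+1$ at its right end extends, rather than closes, that run, and I want to confirm this does not inflate the run count or violate flattening. Since extending the final run never creates or destroys a descent, this case is fine, and the count $k \cdot f(\mathbf{1}_{r};n,k)$ follows.
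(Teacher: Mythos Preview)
Your proposal is correct and follows essentially the same approach as the paper's proof: insert the maximal letter $n+1$ at the end of one of the $k$ runs of an element of $\textnormal{flat}_{k}(\mathcal{PF}_{n}(\mathbf{1}_{r}))$, and observe this is a bijection onto the described subset. The paper's version is a terse three-sentence sketch of this same idea, while you have spelled out the bijection and its inverse more carefully.
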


\begin{proof}
We start with $ \mathbf{1}_{r} $-insertion flattened parking functions of length $ n $ which have cardinality $ f (\mathbf{1}_{r};n,k) $. To get the subword $ a(n+1)b $, where $a>b$, we must place $ n +1 $ where it will not increase the number of runs. 
Thus, we can place $ n+1 $ at the end of each existing run (which includes being at the very end of the parking function), giving us $ k $ options for placement, and accounting for cases 1 and 4.
\end{proof}

\begin{proposition}\label{prop:R1n2}
The subset of $ \textnormal{flat}_{k} (\mathcal{PF}_{n+1} (\mathbf{1}_{r})) $ containing elements with the subword $ a(n+1)b $ with $a<b$ is enumerated by 
\begin{equation*}
(n-1) \cdot f (\mathbf{1}_{r};n-1,k-1).
\end{equation*}
\end{proposition}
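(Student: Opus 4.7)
The plan is to extend the constructive bijection from the proof of Proposition~\ref{prop:1n2} directly to arbitrary $r \geq 2$; the fact that multiple ones appear introduces no new essential difficulty because every one lies in $\{1,\ldots,i\}$, so all of the ones are unaffected by the relabeling and insertion steps. Concretely, for each pair $(i,\pi)$ with $i \in \{1,\ldots,n-1\}$ and $\pi \in \textnormal{flat}_{k-1}(\mathcal{PF}_{n-1}(\mathbf{1}_r))$, I would produce a unique $\sigma \in \textnormal{flat}_k(\mathcal{PF}_{n+1}(\mathbf{1}_r))$ containing a subword $a(n+1)b$ with $a<b$; summing over the $n-1$ values of $i$ then yields the desired count of $(n-1)\cdot f(\mathbf{1}_r;n-1,k-1)$.

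The forward map is in two stages. First, form $\pi'$ by adding one to every entry of $\pi$ whose value strictly exceeds $i$. Second, insert the length-two subword $(n+1)(i+1)$ into $\pi'$ immediately after the rightmost position whose value lies in $\{1,\ldots,i\}$, and call the result $\sigma$. I would verify in turn: (i) the multiset of entries of $\sigma$ is precisely $[n+1] \cup \mathbf{1}_r$, since the relabeling frees the slot $i+1$ while the insertion supplies both $i+1$ and the new maximum $n+1$; (ii) the number of runs increases by exactly one, because the entry $a$ just before the insertion point is followed in $\pi'$ by a value strictly larger than $i+1$, so $a(n+1)$ extends the existing run while $(i+1)$ begins a new one, splitting one run into two; and (iii) the leading values of $\sigma$ remain weakly increasing, since the new leading value $i+1$ is bounded below by every leading value to its left (all $\leq i$) and above by every leading value to its right (all $\geq i+2$, by the rightmost-entry rule).

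For the inverse, given $\sigma$ containing the subword $a(n+1)b$ with $a<b$, I would remove the two entries $(n+1)$ and $b$ and then decrement by one every remaining entry that exceeds $b$, recovering $\pi$ together with the parameter $i = b-1$. The main step requiring care is verifying that $a$ is in fact the rightmost entry of $\sigma$ in $\{1,\ldots,b-1\}$; this is the only place where the presence of multiple ones could conceivably cause trouble. The observation follows from flattenedness: the run beginning at $b$ has leading value $b \geq 2$, every subsequent run has leading value $\geq b$, and hence every entry appearing after $b$ in $\sigma$ is itself $\geq b$; in particular, no ones appear to the right of the $(n+1)$, so the multiplicity of ones in $\mathbf{1}_r$ creates no obstruction. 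Once this observation is in place, the standard checks confirm that the two maps are mutually inverse, establishing the bijection and the claimed enumeration.
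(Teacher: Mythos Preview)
Your proposal is correct and follows essentially the same approach as the paper: the identical forward map (relabel entries exceeding $i$, then insert $(n+1)(i+1)$ after the rightmost entry in $\{1,\ldots,i\}$) and the identical inverse (delete $(n+1)b$, then decrement entries exceeding $b$). If anything, your treatment is slightly more careful than the paper's, since you explicitly justify that $a$ is the rightmost entry of $\sigma$ in $\{1,\ldots,b-1\}$---a point the paper glosses over when asserting the construction reverses.
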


\begin{proof}
This proof is constructive. We begin by letting $ \pi $ be a parking function such that 
$ \pi \in
\textnormal{flat} _{k-1}(\mathcal{PF}_{n-1}(\mathbf{1}_r))$. 
From $\pi$ we construct $\sigma \in \textnormal{flat} _k(\mathcal{PF}_{n+1}(\mathbf{1}_r))$ so that $\sigma$ contains the subword $a(n+1)b$ with $1\leq a<b\leq n-1$. 

To construct $\sigma$ from $\pi$ we do the following procedure:
\begin{enumerate}
\item Fix a value $1\leq i \leq n-1$. 
\item Construct $\pi'$ from $\pi$ by taking every element in $\pi$ that is greater than $i$ and adding one to each of those elements.
\item Then, insert the subword $ (n+1)(i+1) $ into $ \pi' $ after the rightmost element of $ \{1, \ldots,i\} $ in $\pi'$. 
\item Call the resulting word $\sigma$.
\end{enumerate}

We claim that $\sigma\in \textnormal{flat} _k(\mathcal{PF}_{n+1}(\mathbf{1}_r))$.
First note that from step $3$, the length of  $\sigma$ is $n+1+r$.
Additionally, because $\pi'$ does not contain $i+1$ or $n+1$, we have exactly one instance of every letter $j$, where $2\leq j\leq n+1$. 
Hence, $\sigma$ consists of the letters in the multiset $[n+1]\cup \mathbf{1}_r$ as desired.
Note that by construction of $\sigma$, wherever we inserted $(n+1)(i+1)$, the value to the left of $(n+1)(i+1)$ would never end a run, and hence, since $n+1>i+1$, this insertion creates an additional run.
Lastly, we are guaranteed that $\sigma$ is flattened, since $\pi\in \textnormal{flat} _{k-1}(\mathcal{PF}_{n-1}\left(\mathbf{1}_r\right))$ and by steps $2$ and $3$ we inserted $(n+1)(i+1)$ after all elements smaller than $ i + 1 $ and hence the number of runs to the left of the inserted $(n+1)(i+1)$ remains fixed. 
Moreover, as all of the elements to the right of the newly inserted $(n+1)(i+1)$ are larger than $i+1$, we know that the number of those runs also remained fixed. 
Thus, the total number of runs in $\sigma$ is equal to one more than the number of runs in $\pi$, which is $(k-1)+1=k$.
This establishes that $\sigma\in \textnormal{flat} _k(\mathcal{PF}_{n+1}\left(\mathbf{1}_r\right))$.

Furthermore, this construction can be reversed. 
Suppose that $\sigma \in \textnormal{flat} _k(\mathcal{PF}_{n+1}\left(\mathbf{1}_r\right))$ containing the subword $a(n+1)b$ with $a<b$. Since $\sigma$ is flattened, removing the subword $(n+1)b$ from $\sigma$ yields a new word $\sigma'$ of length $n-1+r$ that is missing the value $b$, contains $n$, and has $k-1$ runs. 
Take all the letters in $\sigma'$ that are greater than $b$, and subtract one from each letter. 
This gives a word $\pi$ that is in $\textnormal{flat} _{k-1}(\mathcal{PF}_{n-1}\left(\mathbf{1}_r\right))$. 

Given that we fixed the value $ i $ satisfying $1\leq i\leq n-1$, we have $ n -1 $ options for this integer. Hence, for each choice, we have created $ f\left(\mathbf{1}_r;n-1,k-1\right) $ options for $ \pi $, yielding a total of $ \left(n-1\right) \cdot f\left(\mathbf{1}_r;n-1,k-1\right) $ parking functions  in $\textnormal{flat} _k(\mathcal{PF}_{n+1}\left(\mathbf{1}_r\right))$.
\end{proof}

\begin{proposition}\label{prop:R1n1multiple}
The subset of $ \textnormal{flat}_{k} (\mathcal{PF}_{n+1} \left(\mathbf{1}_{r}\right)) $ containing elements with the subword $ 1(n+1)1 $ is enumerated by 
\begin{equation*}
r \cdot f \left(\mathbf{1}_{r-1};n,k-1\right).
\end{equation*}
\end{proposition}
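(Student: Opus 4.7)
The plan is to exhibit an explicit bijection
\[
\Psi \colon \textnormal{flat}_{k-1}(\mathcal{PF}_n(\mathbf{1}_{r-1})) \times [r] \longrightarrow \{\sigma \in \textnormal{flat}_{k}(\mathcal{PF}_{n+1}(\mathbf{1}_{r})) : \sigma \text{ contains the subword } 1\,(n+1)\,1\},
\]
so the claimed count $r \cdot f(\mathbf{1}_{r-1};n,k-1)$ follows at once from the cardinality of the domain. Given $\pi \in \textnormal{flat}_{k-1}(\mathcal{PF}_n(\mathbf{1}_{r-1}))$, which contains exactly $r$ copies of $1$, and an index $i \in [r]$, I would define $\Psi(\pi,i)$ to be the word obtained from $\pi$ by inserting the two-letter block $1\,(n+1)$ immediately before the $i$-th occurrence of $1$, counted from the left.

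First I would verify that $\Psi(\pi,i)$ lies in the target set. The multiset of entries is clearly $[n+1]\cup\mathbf{1}_{r}$, the result is a parking function by Theorem~\ref{thm:insertionAreParkingFxn}, and the subword $1\,(n+1)\,1$ appears by construction, since the inserted $n+1$ is immediately followed by the preserved $i$-th $1$ of $\pi$. For the run count and the flattened property, I would split into two subcases according to whether the $i$-th $1$ of $\pi$ is preceded by a letter greater than $1$ (so it is the leading value of a run) or by another $1$ (the only remaining case, since $1$ is the minimum letter). In either subcase, because $1 \leq n+1$ is a weak ascent while $n+1 > 1$ is a descent, the insertion contributes exactly one new descent and every leading value affected at the splice point equals~$1$; hence the run count goes up by exactly one, from $k-1$ to $k$, and the sequence of leading values stays weakly increasing.

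The main obstacle is constructing and verifying the inverse. Given $\sigma$ in the codomain, the letter $n+1$ occurs exactly once in $\sigma$, so the subword $1\,(n+1)\,1$ sits at a unique triple of positions $q,q+1,q+2$; I would let $\pi$ be the result of deleting $\sigma_{q}$ and $\sigma_{q+1}$, and let $i$ be the rank among the $1$'s of $\pi$ of the $1$ that formerly sat at position $q+2$. To see that $\pi \in \textnormal{flat}_{k-1}(\mathcal{PF}_n(\mathbf{1}_{r-1}))$, I would argue that the run of $\sigma$ ending at position $q+1$ either has length two, so that deleting $1\,(n+1)$ simply removes that whole run, or has length at least three, in which case the surviving prefix still ends in $1$ and fuses with the next run (which begins with $1$) into a single run of $\pi$; in both cases exactly one run is lost and the weakly increasing order of leading values is preserved. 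The check that $\Psi$ and this inverse compose to the identity in both directions is then routine bookkeeping around the unique occurrence of $n+1$ and the rank of the surviving $1$.
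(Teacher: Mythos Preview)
Your proposal is correct and follows essentially the same construction as the paper: insert the block $1\,(n+1)$ immediately before one of the $r$ existing $1$'s of an element of $\textnormal{flat}_{k-1}(\mathcal{PF}_n(\mathbf{1}_{r-1}))$. Your write-up is in fact more careful than the paper's, which asserts the bijection without separately verifying the run count, flattenedness, or the inverse; your case split on whether the chosen $1$ is preceded by another $1$ or by a larger letter, and your analysis of the run ending at position $q+1$ in the inverse, fill in exactly the details the paper leaves implicit.
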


\begin{proof}
Let $ \sigma \in \textnormal{flat}_{k-1} (\mathcal{PF}_{n} \left(\mathbf{1}_{r-1}\right))$. 
In order to get a parking function in $ \textnormal{flat} _k(\mathcal{PF}_{n+1}\left(\mathbf{1}_{r}\right)) $ that contains the subword $1(n+1)1$, we insert the subword $1(n+1)$ to the start of $ \sigma $. 
Since we can insert the subword $1(n+1)$ to the left of every one in $\sigma $ and remain flat, and since $\sigma$ contains $r$ ones, we have that there are $r \cdot f(\mathbf{1}_{r-1};n,k-1) $ parking functions obtained from this process. 
\end{proof}

We now return to Theorem \ref{thm:R1insPFRec}. 

\begin{proof}
In Proposition \ref{prop:Rdisjointunion}, we showed that our four subsets partition $ \textnormal{flat}_{k} (\mathcal{PF}_{n} \left(\mathbf{1}_{r}\right)) $. Using Propositions \ref{prop:R2n1}, \ref{prop:R1n2}, and \ref{prop:R1n1multiple}, we have
\begin{equation*}
f (\mathbf{1}_{r};n+1,k) = 
k \cdot f (\mathbf{1}_{r};n,k) + 
(n - 1 ) \cdot f (\mathbf{1}_{r};n-1,k-1) + 
r \cdot f (\mathbf{1}_{r-1};n,k-1).\qedhere
\end{equation*}
\end{proof}

The following is a generalization of the result of Nabawanda et al.\ \cite[Thm.\ 12, p.\ 7]{NaRaBa} and Theorem \ref{thm:Ones_gen_func_recurs}.

\begin{theorem}\label{thm:R_Ones_gen_func_recurs}
The cardinality of the set $ \textnormal{flat}_{k} (\mathcal{PF}_{n} \left(\mathbf{1}_{r} \right)) $ satisfies the following recursion:
\begin{equation*}
f \left(\mathbf{1}_{r};n,k\right) = f \left(\mathbf{1}_{r-1};n,k\right) +
\sum_{i=1}^{n-1} \binom{n-1}{i} \cdot f \left(\mathbf{1}_{r-1};n-i,k-1\right).
\end{equation*}
\end{theorem}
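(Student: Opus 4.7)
The plan is to mimic the proof of Theorem~\ref{thm:Ones_gen_func_recurs} by partitioning $\textnormal{flat}_k(\mathcal{PF}_n(\mathbf{1}_r))$ according to the number of $1$s at the start of the first run. Since $1$ is the minimum value and the leading values of the runs are weakly increasing, the first run must begin with $1$; moreover, any $1$s inside a single run must form a consecutive block at its start (a $1$ preceded by a value $\geq 2$ would force a descent). So the first run has exactly $j$ leading $1$s for some $j\geq 1$, and I would split into Case~A where $j\geq 2$ and Case~B where $j=1$.

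In Case~A I would delete one of the leading $1$s. The resulting word has length $n+r-1$, contains exactly $r$ ones in total, and has the same non-$1$ entries $\{2,\dots,n\}$ as the original. The weak-ascent pattern at the start of the first run is preserved (we removed a $1$ from a block of at least two), every other run is untouched, and the sequence of leading values is unchanged, so the image lies in $\textnormal{flat}_k(\mathcal{PF}_n(\mathbf{1}_{r-1}))$. Conversely, prepending a $1$ to any element of that set lengthens the leading block of the first run by one and lands back in Case~A. This bijection accounts for the term $f(\mathbf{1}_{r-1};n,k)$.

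In Case~B I would write the first run as $1\,b_1 b_2\cdots b_s$ with $2\leq b_1<b_2<\cdots<b_s\leq n$ and $s\geq 1$ (a first run of length one followed by further runs would require the next element to be strictly less than $1$, which is impossible). There are $\binom{n-1}{s}$ choices for the set $\{b_1,\dots,b_s\}$. The suffix has length $n+r-1-s$ and consists of the remaining $r$ ones together with the $n-1-s$ unused non-$1$ values. Relabeling those non-$1$ values in order-preserving fashion to $\{2,\dots,n-s\}$ converts the suffix into a word in $\textnormal{flat}_{k-1}(\mathcal{PF}_{n-s}(\mathbf{1}_{r-1}))$, and the inverse map (choose the $s$-subset, invert the relabeling, prepend the first run) is well defined precisely because any flattened parking function containing a $1$ must begin with $1$, so the suffix always starts with $1$ and the boundary $b_s\geq 2>1$ is a genuine descent. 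Summing over $s$ contributes $\sum_{s=1}^{n-1}\binom{n-1}{s}\,f(\mathbf{1}_{r-1};n-s,k-1)$, and combining the two cases yields the recursion.

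The main subtlety is the last observation in Case~B: the surjectivity of the tail map depends crucially on the fact that the relabeled suffix automatically begins with $1$, which is what guarantees $b_s$ genuinely ends the first run. I would also make explicit that the $s=0$ branch is excluded (a single-element first run is incompatible with the presence of further runs) and verify the length bookkeeping $1+s+(n+r-1-s)=n+r$ so that the partition into Cases~A and~B really covers $\textnormal{flat}_k(\mathcal{PF}_n(\mathbf{1}_r))$ without overlap.
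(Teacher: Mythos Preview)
Your proposal is correct and follows essentially the same approach as the paper: both partition $\textnormal{flat}_k(\mathcal{PF}_n(\mathbf{1}_r))$ according to whether the first run contains a single one (your Case~B, the paper's case~1) or more than one (your Case~A, the paper's case~2), delete a leading one in the latter case, and record the first run plus a relabeled tail in the former. Your write-up is more explicit about why the two maps are bijections (in particular your observation that the suffix in Case~B necessarily begins with a~$1$, forcing the descent at $b_s$), whereas the paper's proof states the counts more tersely.
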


\begin{proof}
We construct an element $\pi \in \textnormal{flat}_{k} (\mathcal{PF}_{n} \left(\mathbf{1}_{r}\right)) $. 
We consider the following two possibilities to enumerate $ f \left(\mathbf{1}_{r};n,k\right) $. 

\begin{enumerate}
\item Suppose there is only a single one in the first run and let the length of the first run be $ i + 1 $. 
Then, the first run has a leading term of one and there are $ \binom{n-1}{i} $ options for the remaining elements in the first run. 
The remaining $k-1$ runs follow the pattern of an element from $ \textnormal{flat}_{k-1} (\mathcal{PF}_{n-i} (\mathbf{1}_{r-1})) $ which is enumerated by $ f (\mathbf{1}_{r-1};n-i,k-1)  $.
\item Suppose there is more than a single one in the first run, such that $ \pi $ is of the form $ 1 \tau $, where $ \tau $ is a flattened parking function of length $ n + r - 1 $. 
We remove the first instance of a one from $ \pi $ to get an element from $ \textnormal{flat}_{k} (\mathcal{PF}_{n} (\mathbf{1}_{r-1})) $, which is enumerated by $ f (\mathbf{1}_{r-1};n,k) $.
\end{enumerate}

Adding these two cases together gives
\begin{equation*}
f (\mathbf{1}_{r};n,k) = f (\mathbf{1}_{r-1};n,k) +
\sum_{i=1}^{n-1} \binom{n-1}{i} \cdot f (\mathbf{1}_{r-1};n-i-1,k-1) 
.\qedhere
\end{equation*}
\end{proof}

We now recall that the $r$-Bell numbers enumerate the set of partitions of $ [n+r] $ such that the first $ r $ numbers are in different subsets. Let $B(n,r)$ denote these numbers. 
We define the $(r,k)$-Bell numbers as follows.

\begin{definition}\label{def:k_r_bell}
Let $B_{k}(n,r)$ be the number of set of partitions of $ [n+r] $ satisfying the following conditions:
\begin{enumerate}
\item the first $ r $ numbers are in different subsets
\item there are exactly $k$ blocks with at least two elements.
\end{enumerate}
\end{definition}

Note that $\sum_{k=0}^{n+r}B_{k}(n,r)=B(n,r)$, with some of the terms in the sum being zero based on the values of $n,r,k$.
The following result proves that $\mathbf{1}_r$-insertion flattened parking functions with $k+1$ runs are enumerated by the the $(r,k)$-Bell numbers.

\begin{theorem} \label{rkBellNumbers}
The set $ \textnormal{flat}_{k+1} (\mathcal{PF}_{n+1}(\mathbf{1}_r)) $ is enumerated by $ B_{k}(n,r) $.
\end{theorem}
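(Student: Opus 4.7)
My plan is to prove $f(\mathbf{1}_r; n+1, k+1) = B_k(n,r)$ by induction on $r$, showing that both sides satisfy the same recursion with matching base case. The recursion on the parking function side is already available: applying Theorem \ref{thm:R_Ones_gen_func_recurs} with $n$ replaced by $n+1$ and $k$ by $k+1$ yields
\[
f(\mathbf{1}_r; n+1, k+1) = f(\mathbf{1}_{r-1}; n+1, k+1) + \sum_{i=1}^{n} \binom{n}{i}\, f(\mathbf{1}_{r-1}; n+1-i, k).
\]
So the bulk of the proof will be to establish an analogous recursion for the $(r,k)$-Bell numbers and to handle the base case via Proposition \ref{prop:k+1Runs_to_kBlocks}.

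For the recursion on $B_k(n,r)$, I would condition on the block containing the element $r$ in a set partition of $[n+r]$ satisfying Definition \ref{def:k_r_bell}. If $\{r\}$ is itself a singleton block, deleting it and relabeling $r+1,\ldots,n+r$ as $r,\ldots,n+r-1$ yields a set partition of $[n+r-1]$ in which the first $r-1$ elements still lie in distinct blocks and exactly $k$ blocks still have size at least two; these are counted by $B_k(n, r-1)$. If instead the block containing $r$ has size at least two, I choose its other $i\geq 1$ elements from $\{r+1,\ldots,n+r\}$ in one of $\binom{n}{i}$ ways, delete the block, and relabel the surviving elements of $\{r+1,\ldots,n+r\}$ in the obvious order; what remains is a set partition of $[n+r-1-i]$ whose first $r-1$ elements lie in different blocks and which has exactly $k-1$ blocks of size at least two, and is therefore enumerated by $B_{k-1}(n-i, r-1)$. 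Summing gives
\[
B_k(n,r) = B_k(n, r-1) + \sum_{i=1}^{n} \binom{n}{i}\, B_{k-1}(n-i, r-1),
\]
which matches the parking function recursion term by term under the inductive hypothesis $f(\mathbf{1}_{r-1}; m, k') = B_{k'-1}(m-1, r-1)$.

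For the base case $r=1$, the defining condition on $B_k(n,1)$ only requires the element $1$ to lie in some block, so $B_k(n,1)$ equals the number of set partitions of $[n+1]$ with exactly $k$ blocks of size at least two, i.e.\ $T(n+1,k)$ in the notation preceding Proposition \ref{prop:k+1Runs_to_kBlocks}. That proposition then gives $f(\mathbf{1}_1; n+1, k+1) = f(\{1\}; n+1, k+1) = T(n+1, k) = B_k(n,1)$, closing the induction. The step I expect to require the most care is the relabeling bookkeeping in the $B_k$ recursion, namely verifying that after deleting the block containing $r$ the elements $1,\ldots,r-1$ really do remain in $r-1$ distinct blocks of the relabeled partition; this is transparent since those elements and their block memberships are untouched by the deletion. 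A more direct approach would be to construct an explicit bijection generalizing the cyclic shift map of Proposition \ref{prop:k+1Runs_to_kBlocks} (relabeling each of $1,\ldots,r$ to $1$, shifting all other letters down by $r-1$, cycling each block to move its minimum to the end, concatenating blocks in increasing order of original minimum, and prepending a $1$), but reconstructing the inverse requires careful parsing of the cycle boundaries, so I would proceed by the recursion comparison.
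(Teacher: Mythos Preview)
Your argument is correct and takes a genuinely different route from the paper. The paper proves Theorem~\ref{rkBellNumbers} by constructing an explicit bijection: given a set partition of $[n+r]$ with the first $r$ integers in distinct blocks and exactly $k$ blocks of size at least two, it orders the blocks by minimum, cycles each block so its minimum moves to the end, concatenates, prepends a $1$, replaces each of $1,\ldots,r$ by a $1$, and shifts the remaining letters down by $r-1$; it then verifies that the result lies in $\textnormal{flat}_{k+1}(\mathcal{PF}_{n+1}(\mathbf{1}_r))$ and describes the inverse. You instead argue by induction on $r$, matching the recursion of Theorem~\ref{thm:R_Ones_gen_func_recurs} on the parking-function side with a recursion for $B_k(n,r)$ obtained by conditioning on the block containing $r$, and anchoring at $r=1$ via Proposition~\ref{prop:k+1Runs_to_kBlocks}.

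Your approach has the advantage of being short and of reusing machinery already established in the paper; the recursion you derive for $B_k(n,r)$ is clean and the relabeling bookkeeping is as transparent as you say, since $1,\ldots,r-1$ and their block memberships are untouched. The paper's approach buys an explicit, statistic-preserving bijection (in particular one can read off which set partition corresponds to a given flattened parking function), which your induction does not directly provide. One small point worth making explicit in your write-up: Proposition~\ref{prop:k+1Runs_to_kBlocks} is stated for $k\geq 1$, so you should separately note that the base case $k=0$ holds because both $f(\{1\};n+1,1)$ and $B_0(n,1)$ equal $1$ (the unique weakly increasing word, respectively the all-singletons partition).
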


\begin{proof}
We want to prove a bijection between the set of partitions of $[n+r]$ with the first $r$ numbers in different subsets  and $ \textnormal{flat}_{k+1} (\mathcal{PF}_{n+1}(\mathbf{1}_r)) $. 
To begin let $\mathcal{B}=\mathcal{B}_1/\mathcal{B}_2/\cdots/\mathcal{B}_\ell$ 
be a set partition of $[n+r]$ where the first $r$ integers appear in different blocks. Hence, $\ell \geq r$.
Moreover, since the order of the blocks does not affect the count, we assume that $\min(\mathcal{B}_i)<\min (\mathcal{B}_{i+1}) $ for all $i\in[\ell-1]$.
Suppose there are $k$ blocks with size at least two and the remaining $\ell-k$ blocks consist of a single element.
For a block $\mathcal{B}_i$ with size at least 2, we create a subword $\sigma_i$ as follows: if $\mathcal{B}_i=\{b_1,b_2,\ldots,b_m\}$ with $b_1<b_2<\cdots<b_m$, we ``cycle'' the elements so that they are listed in the order $b_2b_3\cdots b_m b_1=\sigma_i$. Note that when $\mathcal{B}_i$ consists of a single element we let the corresponding subword $\sigma_i$ simply be that value. 
In this way we construct a word from the set partition $\mathcal{B}$ through concatenating the subwords to form the word $\sigma_1\sigma_2\cdots\sigma_\ell$.
We then introduce a one at the start of the word $\sigma_1\sigma_2\cdots\sigma_\ell$ to create $a=1\sigma_1\sigma_2\cdots\sigma_\ell$. 
In $\sigma_1\cdots\sigma_\ell$ replace the integers $1,2,\ldots, r$ with ones, so that now in $a$ there are exactly $r+1$ ones and the remaining numbers are strictly larger than $r$. 
We then subtract $ r - 1 $ from all other terms  creating a parking function of length $ n + 1 + r $, consisting of $r+1$ ones and the integers $2,3,\ldots,n+1$.

We claim $a$ is a $\mathbf{1}_r$-insertion flattened parking function with $k+1$ runs. 
To prove this we begin by noting that in fact the entries in $a$ consist of the elements in the set $[n+1]\cup\mathbf{1}_r$. 
Next we check the number of runs. 
Since the first $r$ integers must appear in distinct blocks, and since we have ordered the blocks by their minimum values, this ensures that $i\in\mathcal{B}_i$ for each $i\in[r]$. 
If all of these sets are singletons, then $1\sigma_1\cdots\sigma_r$ is replaced with $r+1$ ones, which forms the beginning of the first run of $a$.
Whenever $\mathcal{B}_i$, for $i\in[r]$, has size at least two we have introduced a descent through the cycling process used in forming the subword $\sigma_i$. 
As $\mathcal{B}_i$ includes $i$ and all other numbers in $\mathcal{B}_i$ are strictly larger than $r$, then subtracting $r-1$ from those entries keep the descent when forming $a$. 
This shows that the number of descents in $a$ continues to be $k$, which is by assumption the number of blocks with at least two elements. 
Thus, $a$ has $k+1$ runs, as claimed. 

Next we show that $a$ is flattened. First note that since we placed a one at the start of $a$, we are ensured that the first run begins with a one. Next, based on the cycling process and the ordering of the blocks based on their minimums being in increasing order, ensures that the runs in $1\sigma_1\sigma_2\cdots\sigma_\ell$ satisfy that their leading values are in weakly increasing order. Replacing the numbers in $[r]$ with all ones, does not change the fact that the leading terms of the runs are still in weakly increasing order. 
Thus, we know $a$ is in $\textnormal{flat}_{k+1}(\mathcal{PF}_{n+1}(\mathbf{1}_{r}))$.

So far, we have shown that an arbitrary set partition with exactly $k$ blocks of size at least two and whose first $r$ integers appear in distinct blocks, produces an element in $\textnormal{flat}_{k+1}(\mathcal{PF}_{n+1}(\mathbf{1}_{r}))$.
Now, suppose that we have an arbitrary element $\alpha$ in the set $\textnormal{flat}_{k+1}(\mathcal{PF}_{n+1}(\mathbf{1}_{r}))$. 
We construct the set partition of $[n+r]$ where the first $r$ integers appear in distinct blocks and there are exactly $k$ blocks of size at least two, which yields $\alpha$ under our construction above. 
To begin we take the numbers $2,3,\ldots,n+1$ in $a$ and increase them by $r-1$ so that they are now between $r+1, r+2,\ldots, r+n$. 
Note that this keeps all of the descents and runs of $a$.

Now take the $r+1$ ones in $a$ and replace them with the numbers $1,1,2,\ldots, r$ from left to right. 
Note that since those integers were all ones in $a$, by replacing them from left to right with the integers $1,1,2,\ldots, r$, we ensure that these values either remain the leading term of a run (and they are listed in weakly increasing order) or are still within the same run.
Moreover, note that this process yields a word of length $n+1+r$ with the letters in the set $[n+r]\cup\{1\}$,  with $k+1$ runs. 
In Proposition~\ref{prop:k+1Runs_to_kBlocks} we have shown that this is in bijection with a set partition of $[n+r]$ satisfying that there are exactly $k$ blocks with at least two elements. 
It now suffices to show that the first $r$ integers appear in distinct blocks. 
This follows from the bijective map $f^{-1}$ given by Nabawanda et al.\ \cite[Proposition 16]{NaRaBa}, which ensures that our construction satisfies that the numbers $1,2,\ldots, r$ appear in this order from left to right corresponding to the ``left-to-right''
minima, which ensured that each of these elements appears in a distinct block. 
\end{proof}

By summing over $k$, Theorem \ref{rkBellNumbers} immediately implies the following.
\begin{corollary}
    The set $ \textnormal{flat} (\mathcal{PF}_{n+1}(\mathbf{1}_r)) $ is enumerated by $ B(n,r) $, that is the $n^{th}$ $r$-Bell number.
\end{corollary}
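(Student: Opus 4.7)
The plan is to derive this corollary directly from Theorem \ref{rkBellNumbers} by summing over the number of runs, with no additional combinatorial work required. The first step is to observe that every flattened parking function has a well-defined positive number of runs, so the set $\textnormal{flat}(\mathcal{PF}_{n+1}(\mathbf{1}_r))$ partitions as a disjoint union
\[
\textnormal{flat}(\mathcal{PF}_{n+1}(\mathbf{1}_r)) = \bigsqcup_{k\geq 0} \textnormal{flat}_{k+1}(\mathcal{PF}_{n+1}(\mathbf{1}_r)),
\]
where the indexing starts at $k=0$ because the minimum possible number of runs is $1$ (achieved by the weakly increasing parking function $\underbrace{11\cdots 1}_{r+1}2\,3\cdots(n+1)$). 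Only finitely many terms are nonzero, by Theorem \ref{thm:MaxNumberofRuns}.

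The second step is to apply Theorem \ref{rkBellNumbers} termwise to conclude that
\[
|\textnormal{flat}(\mathcal{PF}_{n+1}(\mathbf{1}_r))| = \sum_{k\geq 0} |\textnormal{flat}_{k+1}(\mathcal{PF}_{n+1}(\mathbf{1}_r))| = \sum_{k\geq 0} B_k(n,r).
\]
Then I would invoke the identity $\sum_{k=0}^{n+r} B_k(n,r) = B(n,r)$, which was noted immediately after Definition \ref{def:k_r_bell}. This identity holds because the sets counted by $B_k(n,r)$ (set partitions of $[n+r]$ in which the first $r$ elements lie in distinct blocks and exactly $k$ blocks have size at least two) partition, as $k$ varies, the full collection of set partitions of $[n+r]$ with the first $r$ elements in distinct blocks, which by definition is enumerated by $B(n,r)$.

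There is no real obstacle here: the corollary is an immediate consequence of Theorem \ref{rkBellNumbers} and the definition of $B(n,r)$ as a sum of $(r,k)$-Bell numbers. The only things to verify are that the disjoint union over $k$ is indeed exhaustive (which follows from there being at least one run) and that the summation bounds are correct (which follows from Theorem \ref{thm:MaxNumberofRuns} and the observation that $B_k(n,r)$ vanishes for $k$ exceeding the number of available pairs of elements).
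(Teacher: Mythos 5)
Your proposal is correct and matches the paper's argument exactly: the paper states that the corollary follows from Theorem \ref{rkBellNumbers} ``by summing over $k$,'' using the identity $\sum_{k=0}^{n+r}B_k(n,r)=B(n,r)$ noted after Definition \ref{def:k_r_bell}, which is precisely your decomposition of $\textnormal{flat}(\mathcal{PF}_{n+1}(\mathbf{1}_r))$ by number of runs followed by a termwise application of the bijection. Your added remarks on exhaustiveness of the disjoint union and finiteness of the sum are sound bookkeeping that the paper leaves implicit.
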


We conclude this section by noting that the numbers $B_k(n,r)$ are a generalization of \seqnum{A124234} that include the notion that the first~$r$ integers are in distinct blocks. This sequence of numbers does not yet appear in the OEIS so we provide the Table \ref{tab:my_label} containing the values of $B_k(n,r)$ for small values of $k,n,r$ with $r$ fixed.
\begin{center}
\begin{table}
    \centering
    \begin{tabular}{|c|c|c|c|c|c|}
    \hline         
$n\setminus k$& $1$& $2$&$3$&$4$&$5$\\\hline
         $1$&  1 &0&0&0&0\\
         $2$&  1 &2&0&0&0\\
         $3$&  1 &7&2&0&0\\
         $4$&  1 &18&18&0&0\\
         $5$&  1 &41&97&12&0\\\hline
         \multicolumn{6}{c}{Table of values for $B_k(n,2)$}\\[5pt]\hline
                  $n\setminus k$& $1$& $2$&$3$&$4$&$5$\\\hline
         $1$&  1 &0&0&0&0\\
         $2$&  1 &4&0&0&0\\
         $3$&  1 &13&12&0&0\\
         $4$&  1 &32&84&24&0\\
         $5$&  1 &71&391&312&24\\\hline
         \multicolumn{6}{c}{Table of values for $B_k(n,4)$}\\
         \end{tabular}
         \hspace{.4in}
         \begin{tabular}{|c|c|c|c|c|c|}
         \hline         
$n\setminus k$& $1$& $2$&$3$&$4$&$5$\\\hline
         $1$&  1 &0&0&0&0\\
         $2$&  1 &3&0&0&0\\
         $3$&  1 &10&6&0&0\\
         $4$&  1 &25&45&6&0\\
         $5$&   1&56&219&96&0\\\hline
         \multicolumn{6}{c}{Table of values for $B_k(n,3)$}\\[5pt]\hline
         $n\setminus k$& $1$& $2$&$3$&$4$&$5$\\\hline
         $1$&  1 &0&0&0&0\\
         $2$&  1 &5&0&0&0\\
         $3$&  1 &16&20&0&0\\
         $4$&  1 &39&135&60&0\\
         $5$&  1 &86&613&720&120\\\hline
         \multicolumn{6}{c}{Table of values for $B_k(n,5)$}\\
         \end{tabular}
    \caption{The $r$-Bell numbers, aggregated further by requiring that there are exactly $k$ blocks with at least two elements.}
    \label{tab:my_label}
\end{table}
\end{center}

\section{Generating functions}\label{sec:GenFunc}
In this section, we attempted give a closed differential form for the following exponential generating function:
\begin{equation*}
F(x,y,z) = \sum_{n\geq 1}\sum_{k\geq 1} \sum_{r\geq 1} f(\mathbf{1}_r;n,k)x^{k}\dfrac{y^n}{n!}\frac{z^r}{r!} = \sum_{n\geq 1} \sum_{r\geq 1} \sum_{\alpha \in 
\textnormal{flat} (\mathcal{PF}_n\left(\mathbf{1}_r\right))} x^{\textnormal{run} (\alpha)} \dfrac{y^n}{n!}\frac{z^r}{r!}.
\end{equation*}

Note that, for $r=1$, Theorem \ref{thm:Ones_gen_func_recurs} implies that $ \textnormal{flat} _k(\mathcal{PF}_{n-1}\left(\{1\}\right)) $ and $\mathcal{F}_{n,k}$ are equinumerous, immediately implying that the result of Nabawanda et al.\
\cite[Thm.\ 13, p.\ 8]{NaRaBa} would hold for this case. However, as we attempted to generalize further, we identified an error in the result below. 

\begin{theorem}\label{thm:old_gen_fun}
The exponential generating function $ F(x,y) $ of the run distribution over $ \textnormal{flat} (\mathcal{PF}_{n-1}(\{ 1\})) $ has the closed differential form
\begin{equation*}
\frac{\partial F(x,y)}{\partial y} = x \;\exp(x(\exp(y)-1)+y(1-x))
\end{equation*}
with initial condition $ \frac{\partial F(x,0)}{\partial y} = x $.
\end{theorem}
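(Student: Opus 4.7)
My plan is to bypass the direct PDE derivation attempted in \cite{NaRaBa} and instead compute $\partial F/\partial y$ in closed form by invoking the exponential formula on an equinumerous family of set partitions, then shift indices. This works because the results already proved in the paper give a chain of bijections that identifies the coefficients of $F$ with set partitions weighted by their number of ``large'' blocks.

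First I would stitch together two earlier results. Theorem \ref{thm:Ones_gen_func_recurs} gives the equinumeracy $f(\{1\};n,k) = f_{n+1,k}$, and Proposition \ref{prop:k+1Runs_to_kBlocks} gives $f(\{1\};n,k+1) = T(n,k)$, where $T(n,k)$ counts set partitions of $[n]$ with exactly $k$ blocks of size at least two. Composing, I obtain
\begin{equation*}
f_{n+1,k+1} = T(n,k) \qquad \text{for all } n,k \geq 0.
\end{equation*}

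Next I would compute the bivariate EGF $G(x,y) := \sum_{n,k \geq 0} T(n,k)\, x^k y^n/n!$ by the exponential formula. A set partition is a set of nonempty blocks; weighting each block by $x$ if it has size at least two and by $1$ otherwise, the EGF for a single weighted block is $y + x(e^y - 1 - y)$ (the singleton contributes $y$, and blocks of size $\geq 2$ contribute $x \cdot (e^y - 1 - y)$). Thus
\begin{equation*}
G(x,y) = \exp\bigl(y + x(e^y - 1 - y)\bigr) = \exp\bigl(x(e^y - 1) + y(1 - x)\bigr).
\end{equation*}

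Then I would translate back to $F$. Using $f_{n+1,k+1} = T(n,k)$,
\begin{equation*}
\sum_{n,k \geq 0} f_{n+1,k+1}\, \frac{x^k y^n}{n!} = G(x,y),
\end{equation*}
and multiplying by $x$ and reindexing $k+1 \mapsto k$ gives
\begin{equation*}
\sum_{n \geq 0,\, k \geq 1} f_{n+1,k}\, \frac{x^k y^n}{n!} = x\, G(x,y).
\end{equation*}
Since $f_{n+1,0} = 0$ for all $n \geq 0$ (every nonempty flattened partition has at least one run), the left-hand side equals $\sum_{n,k \geq 0} f_{n+1,k}\, x^k y^n/n! = \partial F/\partial y$, yielding the claimed closed form. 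The initial condition follows by setting $y=0$: both exponentials collapse to $1$, leaving $\partial F(x,0)/\partial y = x$.

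The content is straightforward once the exponential formula is invoked, so the main obstacle is bookkeeping: one must be careful about which indices start at $0$ versus $1$, and that the ``missing'' $k=0$ terms in $\partial F/\partial y$ really do vanish. A secondary concern is locating and articulating the specific algebraic slip in \cite[p.~8]{NaRaBa} (which the authors mention they could not resolve); I would either skip that diagnosis or, if required, rederive their PDE from Theorem \ref{thm:10inNAetal} and verify by direct substitution that the closed form above satisfies both the PDE $G_y = G(xe^y + 1 - x)$ (where $G = \partial F/\partial y$) and the initial condition, pinpointing where the earlier derivation drops a factor.
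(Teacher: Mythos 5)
Your proof is correct, and it is worth stating plainly that it does something the paper itself never does: it actually \emph{proves} Theorem \ref{thm:old_gen_fun}. The paper contains no proof of this statement; it quotes the theorem from \cite{NaRaBa} and then, in Remark \ref{rmk:fixed_proof}, claims to show that it \emph{does not hold}. Your route is entirely different from the paper's PDE manipulations: you compose Theorem \ref{thm:Ones_gen_func_recurs} (giving $f(\{1\};n,k)=f_{n+1,k}$) with Proposition \ref{prop:k+1Runs_to_kBlocks} (giving $f(\{1\};n,k+1)=T(n,k)$) to obtain $f_{n+1,k+1}=T(n,k)$, and then apply the exponential formula with singletons weighted $y$ and blocks of size at least two weighted $x(e^{y}-1-y)$. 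The bookkeeping you flag all goes through: $f_{m+1,0}=0$ since every nonempty word has a run, and the $k=0$ column $f(\{1\};n,1)=T(n,0)=1$ (not literally covered by the stated hypotheses of Proposition \ref{prop:k+1Runs_to_kBlocks}) holds because the unique one-run element is the weakly increasing arrangement $1123\cdots n$, matching the all-singletons partition. A spot check confirms the closed form: the coefficient of $x^ky^n/n!$ in $x\exp(x(e^y-1)+y(1-x))$ at $n=3$ is $x+4x^2$, and indeed $f_{4,1}=1$, $f_{4,2}=4$.

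The deeper consequence of your argument is that the paper's claimed refutation in Remark \ref{rmk:fixed_proof} must contain an error, and it does: a misplaced boundary term in the convolution. Substituting Theorem \ref{thm:12inNAetal} and reorganizing, the inner index $m=n-i$ runs down to $0$ (attained at $i=n$), so
\[
\sum_{n\geq1}\sum_{k\geq1}\sum_{i=1}^{n}\binom{n}{i}f_{n+1-i,k-1}\,x^k\frac{y^n}{n!} \;=\; x(e^y-1)\sum_{m\geq0}\sum_{k\geq1}f_{m+1,k}\,x^k\frac{y^m}{m!} \;=\; x(e^y-1)\,\frac{\partial F}{\partial y},
\]
that is, the \emph{full} series including the constant term $f_{1,1}x=x$ appears — whereas the paper substitutes $\left(\frac{\partial F}{\partial y}-x\right)$, which is only valid for the sum starting at $m\geq 1$. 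With the boundary handled correctly, the spurious $-x^2(e^y-1)$ disappears and the equation collapses to $\frac{\partial^2 F}{\partial y^2}=\frac{\partial F}{\partial y}\left(1+x(e^y-1)\right)$ with $\frac{\partial F}{\partial y}(x,0)=x$, which integrates to exactly the stated closed form — consistent with your exponential-formula computation and with your proposed direct verification $G_y=G(xe^y+1-x)$. One can also falsify the paper's ``corrected'' PDE against data: comparing coefficients of $y^1$ in it forces $\sum_k f_{3,k}x^k=x$, but $f_{3,2}=1$ (the flattened partition $132$), so the true coefficient is $x+x^2$. In short: the theorem is true, your proof stands, and the paper's assertion that Theorem \ref{thm:old_gen_fun} fails rests entirely on this one incorrect substitution.
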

We now detail our computations which show that Theorem \ref{thm:old_gen_fun} does not hold. 
\begin{remark}\label{rmk:fixed_proof}
    
Nabawanda et al.\ \cite[Thm.\ 13, p.\ 8]{NaRaBa} claimed that
\[
\frac{\partial F}{\partial y} = \sum_{n\geq 1} \sum_{k\geq 1} f_{n+1,k} x^k \frac{y^n}{n!},
\]
which is not quite true. The partial derivative gives us
\[
\frac{\partial F}{\partial y} = \sum_{n\geq 1} \sum_{k\geq 1} f_{n,k} x^k \frac{y^{n-1}}{(n-1)!},
\]
and 
\[
f_{1,k} = \begin{cases}
1, & \text{ when } k=1;\\
0, & \text{ else. }
\end{cases}
\]
Thus,
\[
\frac{\partial F}{\partial y} = x+\sum_{n\geq 1} \sum_{k\geq 1} f_{n+1,k} x^k \frac{y^{n}}{n!},
\]
which, to be fair, does line up with their claim that $ \frac{\partial F(x,0)}{\partial y} = x $. 
Similarly, 
Nabawanda et al.\ \cite[Thm.\ 13, p.\ 8]{NaRaBa} claimed that
\[
\frac{\partial^2 F}{\partial y^2} = \sum_{n\geq 1} \sum_{k\geq 1} f_{n+2,k} x^k \frac{y^n}{n!},
\]
which is also not true. The second partial derivative gives us
\[
\frac{\partial^2 F}{\partial y^2} = \sum_{n\geq 1} \sum_{k\geq 1} f_{n+1,k} x^k \frac{y^{n-1}}{(n-1)!},
\]
and 
\[
f_{2,k} = \begin{cases}
1, & \text{ when } k=1;\\
0, & \text{ else. }
\end{cases}
\]
Thus,
\[
\frac{\partial^2 F}{\partial y^2}= x+\sum_{n\geq 1} \sum_{k\geq 1} f_{n+2,k} x^k \frac{y^{n}}{n!}.
\]
Using this starting point, the result of Nabawanda et al.\ \cite[Thm.\ 12, p.\ 7]{NaRaBa} produces the equality
\[
\frac{\partial^2 F}{\partial y^2} = \frac{\partial F}{\partial y} + \frac{\partial F}{\partial y} \cdot x \cdot (\exp(y) -1).
\]
From our work above, this equality should actually be
\[
\frac{\partial^2 F}{\partial y^2} = x+\left (\frac{\partial F}{\partial y} -x\right )+ \left (\frac{\partial F}{\partial y}-x \right ) \cdot x \cdot (\exp(y) -1).
\]
This reduces to 
\[
\frac{\partial^2 F}{\partial y^2} = \frac{\partial F}{\partial y} + \frac{\partial F}{\partial y}\cdot x \cdot (\exp(y) -1)-x^2 \cdot (\exp(y) -1),
\]
which is not solvable in the way claimed by Nabawanda et al.\ \cite[Proof of Theorem 13]{NaRaBa}.
\end{remark}

Given the content of our remark above and the fact that our work relies on theirs, we conclude by providing the details of our analysis and where the lack of a solution to their differential equation prevents us from arriving at a closed form solution. 

\begin{remark}\label{Fxyzgenfunction}
We have
\begin{equation*}
F(x,y,z) = \sum_{n\geq1} \sum_{k\geq1} \sum_{r\geq 1} f(\mathbf{1}_r;n,k) x^{k} \cdot \frac{z^r}{r!}\frac{y^{n}}{n!} = \sum_{n\geq1} g_{n}(x,z) \frac{y^{n}}{n!},
\end{equation*}
where $ g_{n}(x,z) $ is the polynomial defined by $ \sum_{k\geq1}\sum_{r\geq 1} f(\mathbf{1}_r;n,k) x^{k} \frac{z^r}{r!} $.

First, we see that
\begin{align*}
\int_{0}^{z} F(x,y,t) dt &= \int_{0}^{z} \sum_{n\geq1} \sum_{k\geq1} \sum_{r\geq 1} f(\mathbf{1}_r;n,k) x^{k} \cdot \frac{t^r}{r!}\frac{y^{n}}{n!} dt\\
&= \sum_{n\geq1} \sum_{k\geq1} \sum_{r\geq 1} f(\mathbf{1}_r;n,k) x^{k} \cdot \frac{z^{r+1}}{(r+1)!}\frac{y^{n}}{n!}.
\end{align*}

Note that 
\[
\frac{\partial F}{\partial y} = \sum_{n\geq 1}\sum_{k\geq 1} \sum_{r\geq 1} f(\mathbf{1}_r;n,k)x^{k}\dfrac{y^{n-1}}{(n-1)!}\frac{z^r}{r!},
\]
where
\[
f(\mathbf{1}_r;1,k)=\begin{cases} 1, & \text{when $k=1$;}\\
0, & \text{else.}
\end{cases}
\]
This means that, when $n=1$, we have
\[
\sum_{k\geq 1}\sum_{r\geq 1} f(\mathbf{1}_r;1,k)x^{k}\frac{z^r}{r!} = \sum_{r\geq 1} f(\mathbf{1}_r;1,1)x^{1}\frac{z^r}{r!}= x\cdot \exp(z),
\]
and thus
\begin{align}
\frac{\partial F}{\partial y} &= x\cdot \exp(z) +  \sum_{n\geq 2}\sum_{k\geq 1} \sum_{r\geq 1} f(\mathbf{1}_r;n,k)x^{k}\dfrac{y^{n-1}}{(n-1)!}\frac{z^r}{r!}\\
&= x\cdot \exp(z) +  \sum_{n\geq 1}\sum_{k\geq 1} \sum_{r\geq 1} f(\mathbf{1}_r;n+1,k)x^{k}\dfrac{y^{n}}{n!}\frac{z^r}{r!}.
\end{align}

Shortly, we use this in order to see that
\[
\frac{\partial F}{\partial y} - x\cdot \exp(z) = \sum_{n\geq 1}\sum_{k\geq 1} \sum_{r\geq 1} f(\mathbf{1}_r;n+1,k)x^{k}\dfrac{y^{n}}{n!}\frac{z^r}{r!}.
\]

From Theorem \ref{thm:R_Ones_gen_func_recurs}, and summing over $ n $, and $ k $, and $r$, we have
\begin{align*}
\frac{\partial F}{\partial y} &=  x\cdot \exp(z) + \sum_{n\geq1} \sum_{k\geq1} \sum_{r\geq 1} f(\mathbf{1}_r;n+1,k)x^{k} \cdot\frac{z^r}{r!} \frac{y^{n}}{n!}\\
& = x\cdot \exp(z)+\sum_{n\geq1} \sum_{k\geq1} \sum_{r\geq 1} f(\mathbf{1}_{r-1};n+1,k) x^{k} \cdot \frac{z^r}{r!}\frac{y^{n}}{n!}\\
& \quad + \sum_{n\geq1} \sum_{k\geq1} \sum_{r\geq 1} \left(\sum_{i=1}^{n} \binom{n}{ i} f(\mathbf{1}_{r-1};(n+1)-i,k-1) \right) x^{k} \cdot \frac{z^r}{r!}\frac{y^{n}}{n!}.
\end{align*}

We define $B$ as follows:
\[
B\coloneqq \sum_{n\geq1} \sum_{k\geq1} \sum_{r\geq 1} f(\mathbf{1}_{r-1};n+1,k) x^{k} \cdot \frac{z^r}{r!}\frac{y^{n}}{n!}.
\]

We note that simplifying B gives us (using both the derivative and integral forms we established above),
\begin{align*}
B &= \sum_{n\geq1} \sum_{k\geq1}  f(\mathbf{1}_{0};n+1,k)  x^{k} \cdot \frac{z^1}{1!}\frac{y^{n}}{n!} + \sum_{n\geq1} \sum_{k\geq1} \sum_{r\geq 1} f(\mathbf{1}_{r};n+1,k)  x^{k} \cdot \frac{z^{r+1}}{(r+1)!}\frac{y^{n}}{n!}\\
&= z\left ( \sum_{n\geq1} \sum_{k\geq1} f_{n+1, k} x^k\frac{y^n}{n!}\right ) + \int_0^z \left(\frac{\partial F(x,y,t)}{\partial y} - x\exp(t) \right ) dt.
\end{align*}

According to Remark \ref{rmk:fixed_proof}, the result of Nabawanda et al.\
\cite[Thm.\ 13, p.\ 8]{NaRaBa} should be
\[
\sum_{n\geq1} \sum_{k\geq1} f_{n+1, k} x^k\frac{y^n}{n!} =  \frac{\partial F(x,y)}{\partial y} -x. \]

This implies that
\[
B = z\left ( \frac{\partial F(x,y)}{\partial y} -x \right ) +\int_0^z \left(\frac{\partial F(x,y,t)}{\partial y} - x\exp(t) \right ) dt.
\]

Let 
\[
C=\sum_{n\geq1} \sum_{k\geq1} \sum_{r\geq 1} \left(\sum_{i=1}^{n} \binom{n}{i} f(\mathbf{1}_{r-1};(n+1)-i,k-1) \right) x^{k} \cdot \frac{z^r}{r!}\frac{y^{n}}{n!}.
\]

Fixing $ i $ and summing over $ k $ and $ r $ in the next triple sum gives
\[
C = \sum_{n\geq 1} \sum_{i=1}^{n} \left( \sum_{k\geq1} \sum_{r\geq 1} \binom{n}{i} f(\mathbf{1}_{r-1};(n+1)-i,k-1) x^{k} \cdot \frac{z^r}{r!}\right) \frac{y^{n}}{n!}.
\]

Expanding the binomial coefficient for $ C $, and factoring out an $ x $, we have
\begin{align*}
C & = x \sum_{n\geq1} \sum_{i=1}^{n} \frac{y^i}{i!}\left( \sum_{k\geq1} \sum_{r\geq 1}f(\mathbf{1}_{r-1};n-i+1,k-1) x^{k-1} \cdot\frac{y^{n-i}}{(n-i)!} \frac{z^r}{r!}\right).
\end{align*}

Because $n\geq 1$ and $i$ ranges from one to $n$, we know $n-i$ ranges from zero to $n-1$. Simplifying and swapping the order of our sums and reindexing the inner sum indexed by $n$ gives
\begin{align*}
C & = x \sum_{i\geq1} \frac{y^i}{i!} \left ( \sum_{n\geq i}  \sum_{k\geq1} \sum_{r\geq 1} f(\mathbf{1}_{r-1};n-i+1,k-1) x^{k-1} \cdot\frac{y^{n-i}}{(n-i)!} \frac{z^r}{r!}\right) .
\end{align*}

We have
\begin{equation*}
C = x \cdot B \cdot \left(\exp(y) - 1\right).
\end{equation*}

Substituting and simplifying, we get
\[
\frac{\partial F}{\partial y} = x\cdot \exp(z)+ B+C.
\]
Since we also were unable to give a closed form solution for $B$, we stopped our analysis here. 
Thus, we conclude with this open problem:
Consider the exponential generating function \[F(x,y,z) = \sum_{n\geq 1}\sum_{k\geq 1} \sum_{r\geq 1} f(\mathbf{1}_r;n,k)x^{k}\dfrac{y^n}{n!}\frac{z^r}{r!} = \sum_{n\geq 1} \sum_{r\geq 1} \sum_{\alpha \in 
\textnormal{flat} (\mathcal{PF}_n\left(\mathbf{1}_r\right))} x^{\textnormal{run} (\alpha)} \dfrac{y^n}{n!}\frac{z^r}{r!},\] and give a closed form. 
\end{remark}

\section{ \texorpdfstring{$\mathbf{1}_{r}$}{Lg}-insertion flattened parking functions with the first \texorpdfstring{$s$}{Lg} terms in different runs}\label{sec:flattened_s_in_dif_runs}

In this section, we consider $r\geq 1$.
We begin by stating the recursive result of Nabawanda et al.\ \cite[Thm.\ 14, p.\ 10]{NaRaBa}, 
which considers the first $s$ integers in $[n]$ being in separate runs.

\begin{theorem}\label{thm:Olivia_rec}
For all integers $s$, $k$, and $n$ satisfying $1 \leq s \leq  k < n$, let $ f^{(s)}(n+s,k) $ represent the number of flattened partitions where the first $s$ distinct integers are in separate runs. The numbers $f^{(s)}(n+s,k)$ satisfy the recursion 
\begin{equation*}
f^{(s+1)}(n+s+1,k)= \sum_{i_1,i_2,\ldots,i_s\geq1} \binom{n}{i_1,\ldots,i_{s}} f(n+1-\sum_{j=1}^{s} i_{j}\,,\,k-s),
\end{equation*}
where we let $f(n,k) \coloneqq f_{n,k}$ denote the number of flattened partitions of length $n$ with $k$ runs.
\end{theorem}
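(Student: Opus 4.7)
The plan is to give a direct combinatorial construction of any flattened partition of $[n+s+1]$ with $k$ runs in which the integers $1, 2, \ldots, s+1$ all lie in distinct runs, by peeling off the first $s$ runs. The key initial observation is that in a flattened partition the leading values of the runs are strictly increasing, so if $1, 2, \ldots, s+1$ occupy $s+1$ distinct runs, they must in fact be the leading values of the first $s+1$ runs, appearing in the order $1, 2, \ldots, s+1$. In particular, the $(s+1)$-th run begins with $s+1$ and is the first run of the ``tail'' that remains after deleting the first $s$ runs.

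For each $j \in \{1, 2, \ldots, s\}$, let $i_j$ denote the number of elements of the $j$-th run following its leading value $j$. The elements occupying these positions must be strictly larger than $j$ and distinct from the leading values $j+1, j+2, \ldots, s+1$ of later runs, so they are drawn from the set $\{s+2, s+3, \ldots, n+s+1\}$ (of cardinality $n$) and must appear in strictly increasing order within the $j$-th run. I would argue that $i_j \geq 1$ for each $j \in [s]$ by the same reasoning used in the proof of Theorem~\ref{thm:MaxNumberofRuns}: if a non-terminal run of a flattened partition had length $1$, its single value would simultaneously have to exceed the leading value of the next run (to create the required descent) and be less than that leading value (since leading values strictly increase), which is impossible. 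Since the first $s$ runs are all non-terminal (because $s+1 \leq k$), each of them necessarily has length at least two.

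Putting these pieces together, specifying the first $s$ runs amounts to choosing an ordered partition of the $n$ elements $\{s+2, \ldots, n+s+1\}$ into $s$ labelled blocks of sizes $i_1, \ldots, i_s$ (the $j$-th block supplying the ``extras'' of the $j$-th run) together with a leftover block of size $n - \sum_{j=1}^{s} i_j$, contributing the multinomial factor $\binom{n}{i_1, \ldots, i_s}$. The $n+1 - \sum_{j=1}^{s} i_j$ elements not used in the first $s$ runs — having $s+1$ as their minimum, which is automatically the leading value of their first run — form, up to order-isomorphism, a flattened partition of length $n+1 - \sum_{j=1}^{s} i_j$ with the remaining $k-s$ runs, and are therefore counted by $f(n+1 - \sum_{j=1}^{s} i_j, \, k-s)$. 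Summing the product $\binom{n}{i_1, \ldots, i_s} \cdot f(n+1 - \sum_{j=1}^{s} i_j, \, k-s)$ over all tuples $(i_1, \ldots, i_s)$ with each $i_j \geq 1$ yields the claimed recursion. The main obstacle I anticipate is the careful verification that $i_j \geq 1$ must hold for every $j \leq s$ — i.e., ruling out length-one runs among the first $s$ runs — together with the bookkeeping that confirms the ``tail'' genuinely falls into the unrestricted class $f$ rather than into some further restricted class.
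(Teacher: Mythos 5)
Your proposal is correct, and it is essentially the argument this paper uses for its own generalizations of this statement (Theorems~\ref{thm:s_split_v1} and \ref{thm:s_split_v2}, since Theorem~\ref{thm:Olivia_rec} itself is only quoted from Nabawanda et al.): the leading values $1,2,\ldots,s+1$ of the first $s+1$ runs are forced, the remaining entries of the first $s$ runs are chosen from the $n$ elements $\{s+2,\ldots,n+s+1\}$ via the multinomial coefficient and placed in increasing order, and the tail is order-isomorphic to an unrestricted flattened partition counted by $f(n+1-\sum_{j=1}^{s} i_j,\,k-s)$. Your extra care in ruling out length-one non-terminal runs (justifying $i_j\geq 1$) and in noting that the descent into the $(s+1)$-th run is automatic—because each of the first $s$ runs ends in a value at least $s+2$, so no correction term like the ``$-1$'' of Theorem~\ref{thm:thm1with1one} is needed—fills in details the paper's proofs leave implicit.
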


Let $ f^{(s)} (\mathbf{1}_r; n+s, k)$ represent the number of  $\mathbf{1}_{r}$-insertion flattened parking functions where the first $s$ integers are in separate runs. Note that this means that $k\geq s$.

There are many cases to consider here: the ones can be separate from all the other $s-1$ integers in many ways. Since there are $r+1$ instances of the integer one in all $\mathbf{1}_r$-insertion flattened parking functions, we would have to consider all compositions of the integer $r+1$ as options for how to distribute the ones among the $k$ runs. 
Our general recurrence for $ f^{(s)} (\mathbf{1}_r; n+s, k)$ takes into account all such options. 
However, we begin with some special cases that are used in the proof of our main result which are also direct generalizations of the work of Nabawanda et al.\ \cite{NaRaBa}.

In the following result we suppose that all of the ones are contained in the same run. In this case, we recover the result of 
Nabawanda et al.\ \cite[Thm.\ 14, p.\ 10]{NaRaBa}.

\begin{theorem}\label{thm:s_split_v1}
For all integers $s$, $k$, $n$, and $r$ satisfying $1 \leq s < k < n$ and $r\geq1$, let $g^{(s)}(\mathbf{1}_r;n+s,k)$ denote the number of  $\mathbf{1}_{r}$-insertion flattened parking functions where the first $s$ distinct integers are in separate runs and all ones are in the same run. The numbers $g^{(s)}(\mathbf{1}_r;n+s,k)$ satisfy the recursion 
\begin{equation*}
g^{(s+1)}(\mathbf{1}_r;n+s+1,k)= \sum_{i_1,i_2,\ldots,i_s\geq1} \binom{n}{i_1,\ldots,i_{s}} f(n+1-\sum_{j=1}^{s} i_{j},k-s).
\end{equation*}
\end{theorem}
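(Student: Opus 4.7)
The plan is to mimic the proof of Theorem \ref{thm:Olivia_rec} in the flattened partition setting, exploiting the key observation that forcing all $r+1$ ones into a single run rigidly determines their placement: they must sit as an ascending prefix at the start of the first run. This reduces the enumeration to the familiar flattened partition count, with the $r$ extra ones contributing exactly one configuration and so carrying no combinatorial weight of their own.

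First I would verify that the leading values of the first $s+1$ runs of any parking function counted by $g^{(s+1)}(\mathbf{1}_r; n+s+1, k)$ are precisely $1, 2, \ldots, s+1$, in that order. Since the leading values of a flattened object are weakly increasing, no integer in $[n+s+1]$ lies strictly between consecutive elements of $\{1, 2, \ldots, s+1\}$, and the first $s+1$ distinct integers are distributed across distinct runs (with all ones restricted to a single run), this forces the first $s+1$ runs to be led by $1, 2, \ldots, s+1$ respectively. The $r$ additional ones must then sit at the start of the first run, forming the unique prefix $\underbrace{11\cdots 1}_{r+1}$.

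Next, for $j = 1, \ldots, s$, I would let $i_j$ denote the number of elements of $\{s+2, \ldots, n+s+1\}$ contained in run $j$, beyond its leading value (and beyond the $r+1$ ones when $j = 1$). I would argue $i_j \geq 1$: otherwise run $j$ would terminate at a value at most $j+1$, the leading value of run $j+1$, causing the two runs to merge. Given a composition $(i_1, \ldots, i_s)$ with each $i_j \geq 1$, the choice of which elements of the $n$-set $\{s+2, \ldots, n+s+1\}$ populate each of the first $s$ runs contributes the multinomial $\binom{n}{i_1, \ldots, i_s}$, since within each run elements must appear in increasing order (in fact strictly, as every value except $1$ appears exactly once). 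The unused $n - \sum_j i_j$ elements, together with the leading value $s+1$ of run $s+1$, form a flattened partition of length $n+1 - \sum_{j=1}^s i_j$ with $k - s$ runs after an order-preserving relabeling, contributing the factor $f(n+1 - \sum_{j=1}^s i_j, k-s)$. Summing over all valid compositions yields the claimed recursion.

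The main subtlety I anticipate is simply verifying the uniqueness of the all-ones prefix in the first run and confirming that this rigidity introduces no additional placement choices. Once this is established, the combinatorics of the remaining structure is identical to the flattened partition case of Theorem \ref{thm:Olivia_rec}, and the proof proceeds essentially verbatim.
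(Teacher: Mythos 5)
Your proposal is correct and takes essentially the same approach as the paper's proof: it fixes the leading values $1,2,\ldots,s+1$ of the first $s+1$ runs, observes that the $r+1$ ones form a rigid prefix of the first run, counts the fillings of the first $s$ runs by the multinomial coefficient $\binom{n}{i_1,\ldots,i_s}$ with each $i_j\geq 1$ forced by the merging argument, and counts the tail by $f(n+1-\sum_{j=1}^{s} i_j,\,k-s)$ after order-preserving relabeling. The only difference is that you spell out justifications (why the first runs are led by $1,\ldots,s+1$ and why the ones' placement is unique) that the paper states without proof.
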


\begin{proof}
Let $\pi$ be an element in $\textnormal{flat} _k(\mathcal{PF}_{n+s+1}\left(\mathbf{1}_r\right))
$ where the first $s+1$ distinct integers are in separate runs and all ones are in the first run. To maintain flattenedness, the first $s+1$ runs begin with $1,2, \ldots, s, s+1$, respectively. 
Let $i_1+r+1, i_2+1, i_3+1, \ldots, i_s +1$ be the lengths of the first $s$ runs. 
Note that the leading terms of each run, as well as the placement of the ones, are fixed. 
Thus, we have $i_1$ terms to arrange out of $(n + s + 1 + r) - (s + 1 + r) = n$ 
terms for the first run, and there are $\binom{n}{i_1}$ ways to do so. 
For the second run, we have $i_2$ terms to arrange out of $n-i_1$ terms, and there are $\binom{n-i_1}{i_2}$  ways to do so. 
Repeating this process $s$ times, yields $\binom{n}{i_{1}} \binom{n-i_1}{i_{2}} \cdots \binom{n-\sum_{j=1}^{s-1} i_{j}}{i_{s}} = \binom{n}{i_1,\ldots,i_{s}}$  possibilities for the first way to arrange the entries in the first $s$ runs.
Now that the first $s$ runs have been chosen, we consider the remaining $n+1-\sum_{j=1}^{s} i_{j}$ elements that are arranged into the final $k-s$ runs. The smallest number in this set is $s+1$ by design, so it starts the $(s+1)$th run. 
There are $f(n+1-\sum_{j=1}^{s} i_{j},k-s)$ possible ways to arrange the remaining elements into a flattened subword. Thus, we have a total of $\sum_{i_1,i_2,\ldots,i_s\geq1} \binom{n}{i_1,\ldots,i_{s+r}} f(n+1-\sum_{j=1}^{s} i_{j},k-s)$ words that are $\mathbf{1}_r$-insertion flattened parking functions where the first $s+1$ distinct integers are in separate runs and all of the ones are in the first run, as desired.
\end{proof}

Next, we consider the case where each one is contained in a distinct run, which again recovers the result of Nabawanda et al.\ \cite[Thm.\ 14, p.\ 10]{NaRaBa}.

\begin{theorem}\label{thm:s_split_v2}
For all integers $s$, $k$, $n$, and $r$ satisfying $ 1 \leq s + r < k < n $, let $ h^{(s)}(\mathbf{1}_r;n+s,k)$ denote the number of $ \mathbf{1}_{r} $-insertion flattened parking functions where the $r$ ones and first $s$ distinct integers are all in separate runs. The numbers $h^{(s)}(\mathbf{1}_r;n+s,k)
$ satisfy the recursion 
\begin{equation*}
h^{(s+1)}(\mathbf{1}_r;n+s+1,k)
 = \sum_{i_{1},\ldots,i_{s+r}\geq1} \binom{n}{i_{1},\ldots,i_{s+r}} f(n+1-\sum_{j=1}^{s+r} i_{j},k-s-r).
\end{equation*}
\end{theorem}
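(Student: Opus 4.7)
The plan is to mirror the constructive proof of Theorem \ref{thm:s_split_v1}, modifying it so that each of the $r+1$ ones occupies its own run rather than all being gathered in the first run. The strategy is to build every $\mathbf{1}_r$-insertion flattened parking function satisfying the hypotheses by first fixing the leading value of each early run, then distributing the remaining integers among those runs, and finally observing that what is left automatically forms a flattened partition counted by $f$.

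First, I would argue that, since the parking function is flattened and the $r+1$ ones together with the integers $2, 3, \ldots, s+1$ must each head a separate run, the leading values of the first $s+r+1$ runs are forced to be $\underbrace{1, 1, \ldots, 1}_{r+1}, 2, 3, \ldots, s+1$, in that order. All non-leading entries of these early runs, as well as all entries in the remaining $k - s - r - 1$ runs, must therefore come from the pool $\{s+2, s+3, \ldots, n+s+1\}$, which has exactly $n$ elements.

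Next, for positive integers $i_1, \ldots, i_{s+r}$, I would let $i_j$ denote the number of non-leading entries in the $j$-th run. Distributing pool elements into the first $s+r$ runs yields $\binom{n}{i_1, \ldots, i_{s+r}}$ choices, with the order inside each run forced to be increasing so as not to create a spurious descent. The $n - \sum_{j=1}^{s+r} i_j$ leftover pool elements, together with the leading value $s+1$ of the $(s+r+1)$-th run, then form a flattened partition on $n+1 - \sum_{j=1}^{s+r} i_j$ elements with $k - s - r$ runs, contributing $f(n+1 - \sum_{j=1}^{s+r} i_j, k-s-r)$ possibilities. Summing over all valid tuples gives the stated recurrence.

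The main obstacle I expect is the careful verification that this process produces a bona fide flattened parking function and sets up a bijection with the count on the right-hand side. This reduces to two checks: (i) the full sequence of leading values is weakly increasing, and (ii) each of the first $s+r$ runs terminates at an honest descent so that it is a maximal weakly ascending chain. Check (i) is immediate because the tail flattened partition begins at $s+1$, which slots in seamlessly after the initial leadings $1, 1, \ldots, 1, 2, \ldots, s+1$ and is then followed by weakly increasing leadings by the flattened property of the tail. Check (ii) forces $i_j \geq 1$ for every $j \leq s+r$: every non-leading entry lies in $\{s+2, \ldots, n+s+1\}$ while the next leading value is at most $s+1$, so a single non-leading entry already supplies the needed descent, whereas setting $i_j = 0$ would cause consecutive runs to merge. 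These invariants are routine but must be tracked carefully to ensure no configuration is double-counted or missed.
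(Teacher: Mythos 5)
Your proposal is correct and follows essentially the same route as the paper's proof: fix the forced leading values $1,\ldots,1,2,\ldots,s+1$ of the first $s+r+1$ runs, distribute the $n$ remaining values $\{s+2,\ldots,n+s+1\}$ into the first $s+r$ runs via the multinomial coefficient $\binom{n}{i_1,\ldots,i_{s+r}}$ with increasing order forced within each run, and recognize the tail beginning at $s+1$ as an order-isomorphic copy of a flattened partition counted by $f\bigl(n+1-\sum_{j=1}^{s+r} i_j,\,k-s-r\bigr)$. If anything, your checks (i) and (ii) — that the leading values splice together weakly increasingly and that $i_j\geq 1$ is forced because a length-one run would merge with its successor — are spelled out more explicitly than in the paper, which simply asserts that the leading terms are fixed.
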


\begin{proof}
Let $\pi$ be an element in $\textnormal{flat} _k(\mathcal{PF}_{n+s}\left(\mathbf{1}_r\right)
)$ where the $r$ ones and first $s+1$ distinct integers are in separate runs. 
Let $i_1+1, i_2+1, i_3+1, \ldots, i_{s+r} +1$ be the lengths of the first $s+r$ runs. Note that, to preserve flattenedness, the leading terms of each run are fixed. Thus, we have $i_1$ terms to arrange out of $(n + s + r + 1) - (s + r + 1) = n$ terms for the first run, and there are $\binom{n}{i_1}$ ways to do so. 
For the second run, we have $i_2$ terms to arrange out of $n-i_1$ terms, and there are $\binom{n-i_1}{i_2}$ ways to do so. Repeating this process $s+r$ times, we have 
$\binom{n }{ {i_1}} 
\binom{{n-i_1} }{ {i_2}} 
\cdots 
\binom{n-\sum_{j=1}^{s-1} i_j}{ {i_s}} = \binom{n}{{i_1,\ldots,i_{s+r}}}$ possibilities for the first $s+r$ runs. The remaining $k-s-r$ runs are determined by the integers not used in the first $s+r$ runs. The smallest number in this set is $s+1$ by design, so it starts the $(s+1+r)$th run. There are $f(n+1-\sum_{j=1}^{s+r} i_{j}, k-s-r)$ possibilities for the pattern of these integers. Thus, we have a total of $\sum_{i_1,i_2,\ldots,i_{s+r}\geq1} \binom{n}{{i_1,\ldots,i_{s+r}}} f(n+1-\sum_{j=1}^{s+r} i_{j},k-s-r)$ words that are $\mathbf{1}_r$-insertion flattened parking functions where the first $s+1$ distinct integers and the additional $r$ ones are in separate runs.
\end{proof}

\begin{theorem}\label{thm:s_split_all_ways}
For all integers $ s,k,n, $ and $ r $ such that $ 1 \leq s+r < k < n $, let $ f^{(s)}(\mathbf{1}_r;n+s,k)$ denote the number of $ \mathbf{1}_{r} $-insertion flattened parking functions where the first $ s $ distinct integers are all in separate runs, with the ones in every possible composition of runs. The numbers $f^{(s)}(\mathbf{1}_r;n+s,k)$ satisfy the recursion 
{{
\begin{align*}
f^{(s+1)}(\mathbf{1}_r;n+s+1,k)
 = \hspace{4.5in}\\
 \sum_{x=1}^{r+1} \left(\binom{r }{ x-1} \cdot\left( \sum_{i_{1},\ldots,i_{s+x-1}\geq1} \binom{n}{{i_{1},\ldots,i_{s+x-1}}} f\left(n+1-\sum_{j=1}^{s+x-1} i_{j},k-s-x+1\right)\right)\right).
\end{align*}
}}
\end{theorem}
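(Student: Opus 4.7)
The plan is to extend the strategy used in Theorems \ref{thm:s_split_v1} and \ref{thm:s_split_v2} by conditioning on the new parameter $x$, the number of runs whose leading value equals $1$. In any $\mathbf{1}_r$-insertion flattened parking function, the $r+1$ ones must occupy a contiguous block of runs at the start (since leading values are weakly increasing and the only repeated value is $1$). So $x$ ranges over $\{1,2,\ldots,r+1\}$, and conditioning on $x$ partitions the objects counted by $f^{(s+1)}(\mathbf{1}_r;n+s+1,k)$. Once $x$ is fixed, the structure is forced: the first $x$ runs all begin with $1$; then, because the first $s+1$ distinct integers must occupy distinct runs and leading values are weakly increasing, the next $s$ runs begin with $2, 3, \ldots, s+1$; and the remaining $k-s-x$ runs form a flattened tail whose smallest available leading value is $s+1$.

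Fixing $x$, I would then count each piece. First, distributing the $r+1$ ones among the $x$ one-runs so that each gets at least one corresponds to a composition of $r+1$ into $x$ positive parts, giving $\binom{r}{x-1}$. Second, the $n$ elements of $\{s+2,s+3,\ldots,n+s+1\}$ are exactly the non-one, non-leading entries; they must be allocated to the bodies of the first $s+x-1$ runs (the $x$ one-runs together with the runs beginning $2,3,\ldots,s$), with $i_j$ elements in the $j$-th such run. This gives the multinomial $\binom{n}{i_1,\ldots,i_{s+x-1}}$, because within each run the chosen body elements are placed in strictly increasing order after the prefix of ones (or after the leading value $j$). Third, the remaining $n-\sum_{j=1}^{s+x-1} i_j$ elements from $\{s+2,\ldots,n+s+1\}$, together with $s+1$ itself, form an arbitrary flattened arrangement on $n+1-\sum_{j=1}^{s+x-1} i_j$ symbols with $k-s-x+1$ runs, contributing $f\left(n+1-\sum_{j=1}^{s+x-1} i_j,\,k-s-x+1\right)$; the relabeling is legitimate because only the relative order matters for flattenedness.

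The critical technical step is to justify the constraint $i_j\geq 1$ for every $j\in\{1,\ldots,s+x-1\}$. I would argue that any $i_j=0$ forces a merger of adjacent runs: for $j<x$, two consecutive one-runs would both end and begin with $1$, producing a weak ascent; for $j=x$, the last one-run would end in $1$ and be followed by a run beginning with $2$, again a weak ascent; and for $j>x$, a run starting with some $\ell\in\{2,\ldots,s\}$ and containing nothing else would weakly ascend into the next run starting with $\ell+1$. Conversely, when every $i_j\geq 1$, I would verify the descents are automatic because every body element lies in $\{s+2,\ldots,n+s+1\}$, hence strictly exceeds every subsequent leading value among the first $s+x$ runs.

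The main obstacle will be the bookkeeping needed to confirm that the decomposition is exhaustive and disjoint, particularly ensuring that the $(s+1)$-run is counted exactly once (inside the tail $f$ factor) and that the composition factor $\binom{r}{x-1}$ correctly accounts for the internal structure of the one-runs without overlapping the multinomial factor. Once these compatibility checks are in place, summing the product of contributions over $x$ from $1$ to $r+1$ yields the stated recursion, with the extreme cases $x=1$ recovering Theorem \ref{thm:s_split_v1} and $x=r+1$ recovering Theorem \ref{thm:s_split_v2}, providing a useful consistency check for the argument.
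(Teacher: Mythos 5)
Your proposal is correct and takes essentially the same approach as the paper: both condition on the number $x$ of runs led by a one, count the resulting skeleton via the multinomial-times-$f\left(n+1-\sum_{j=1}^{s+x-1}i_j,\,k-s-x+1\right)$ formula of Theorem \ref{thm:s_split_v2} (which you rederive directly rather than cite), weight by the $\binom{r}{x-1}$ compositions of $r+1$ into $x$ positive parts, and sum over $x=1,\ldots,r+1$. Your explicit verifications that the one-led runs form a contiguous prefix and that each $i_j\geq 1$ is forced (else adjacent runs merge) are details the paper leaves implicit, but they do not alter the argument.
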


\begin{proof}
Let $\pi$ be an element in $\textnormal{flat} _k(\mathcal{PF}_{n+s}\left(\mathbf{1}_r\right)
)$ where the $r$ ones and first $s+1$ distinct integers are in separate runs. 
We begin by counting the number of $\mathbf{1}_{r}$-insertion flattened parking functions such that the $r+1$ ones are in $x$ different runs, with at least a single one in each of the $x$ runs, note that this necessitates $1 \leq x \leq r+1$. 
Recall from Theorem \ref{thm:s_split_v2} that $ \sum_{i_1,i_2,\ldots,i_{s+x}\geq1} \binom{n}{{i_1,\ldots,i_{s+x}}} f(n+1-\sum_{j=1}^{s+x} i_{j},k-s-x) $ counts the number of $\mathbf{1}_x$-insertion flattened parking functions with the first $s+1$ integers in different runs. 
Observe that there are $r+1-x$ ones that are missing for $\pi$ to be a $\mathbf{1}_{r}$-insertion flattened parking function. 
We can arrange these ones in any of the first $x$ runs without changing all other runs of $\pi$, and this continues to be flattened. 

The number of ways to arrange the ones in the first $ x $ runs is the same as the number of integer compositions of $ r+1 $ over $ x $ parts.
Each part of such a composition indicates the number of ones in each run.
This means that we can multiply by a coefficient of $ \binom{r }{ x-1} $ to get the total number of $ \mathbf{1}_{r} $-insertion flattened parking functions with the first $s+1$ integers in different runs and all ones over $x$ runs. 

Then, to find the total number of $\mathbf{1}_{r}$-insertion flattened parking functions with the first $s+1$ integers in different runs, we include the instances where all ones are in the same run, two different runs, up to $r+1$ different runs. Hence the result is given  by a sum over all possible values for $x-1$, i.e.,~$x=1,2,\ldots,r+1$.
\end{proof}

\begin{remark}\label{gf_fun_2}
As in the previous section, we attempted to give a generalization of the result of Nabawanda et al.\ \cite[Thm.\ 15, p.\ 10]{NaRaBa}. However, we again identified an error given the incorrect computation of Nabawanda et al.\ \cite[Thm.\ 13, p.\ 8]{NaRaBa}. 
Given those errors, we ceased our analysis. Thus, we conclude with this open problem:
Consider the exponential generating function $ F^{[s+1]} (x,u) $ for the numbers $ h^{(s+1)}\left(\mathbf{1}_{r},n+s+1, k\right) $ and give a closed form. 
\end{remark}

\section{Future directions}

Aside from the generating function questions we posed in Remark \ref{Fxyzgenfunction} and Remark \ref{gf_fun_2}, we present two additional directions for further study: 1. the insertion process and bijections, 2. enumerating flattened parking functions with $k$ runs. We expand on these future directions below. 

In this paper, we introduced the $\mathcal{S}$ insertion process for permutations. which always created a parking function. 
We wonder if there are other bijections between related insertion sets, similar to the results we presented in Section \ref{section:general_S_insertion}. One could also expand to other $\mathcal{S}$ consisting of other repeated values to see what of our results can be generalized further.

Flattened parking functions still need more consideration. We provided some initial data on the enumeration of flattened parking functions of length $n$ with $k$ runs, but these sequences do not appear in the OEIS. Thus, it remains an open problem to provide recurrences or formulas enumerating these combinatorial objects.

\section{Acknowledgments} 
The authors were supported by the National Science Foundation under Grant No.\ DMS-1929284 while in residence at the Institute for Computational and Experimental Research in Mathematics in Providence, RI.
We thank ICERM for funding support and extend our gratitude to Susanna Fishel and Gordon Rojas Kirby for many helpful conversations throughout the writing of this manuscript. The authors also extend their thanks to Alexander Woo for key insights which helped establish Lemma~\ref{prop:help_from_A}.
P.~E.~Harris was supported through a Karen Uhlenbeck EDGE Fellowship.

\end{document}